\title[Topology of Springer fibers of type C and D]{Topology of two-row Springer fibers for the even orthogonal and symplectic group}
\author{Arik Wilbert}
\date{\today}
\address{Mathematical Institute\\
	University of Bonn\\
	Endenicher Allee 60\\
	53115 Bonn\\
	Germany}
\email{wilbert@math.uni-bonn.de}
\thanks{This research was funded by a Hausdorff scholarship of the Bonn International Graduate School in Mathematics}
\theoremstyle{definition}
\newtheorem{defi}{Definition}
\theoremstyle{remark}
\newtheorem{ex}[defi]{Example}
\newtheorem{rem}[defi]{Remark}
\newtheorem{notation}[defi]{Notation}
\theoremstyle{plain}
\newtheorem{lem}[defi]{Lemma}
\newtheorem{prop}[defi]{Proposition}
\newtheorem{thm}[defi]{Theorem}
\newtheorem{introthm}{Theorem}
\newtheorem*{introprop}{Proposition}
\newcommand{\CupConnect}{\text{\----}}
\newcommand{\DCupConnect}{\text{\----}\hspace{-10pt}\bullet\hspace{2pt}}
\newcommand{\RayConnect}{\text{\----}\hspace{-6pt}\shortmid}
\newcommand{\DRayConnect}{\text{\----}\hspace{-8pt}\bullet\hspace{-3pt}\shortmid}
\newcommand{\ba}{{\bf a}}
\newcommand{\bb}{{\bf b}}
\DeclareMathOperator{\spn}{span}
\DeclareMathOperator{\im}{im}
\begin{document}

\begin{abstract}
We construct an explicit topological model (similar to the topological Springer fibers appearing in work of Khovanov and Russell) for every two-row Springer fiber associated with the even orthogonal group and prove that the respective topological model is homeomorphic to its corresponding Springer fiber. This confirms a conjecture by Ehrig and Stroppel concerning the topology of the equal-row Springer fiber for the even orthogonal group. Moreover, we show that every two-row Springer fiber for the symplectic group is homeomorphic (even isomorphic as an algebraic variety) to a connected component of a certain two-row Springer fiber for the even orthogonal group.    
\end{abstract}

\maketitle

\section{Introduction}

In \cite{Kho04} Khovanov introduced a topological model for all Springer fibers of type $A$ corresponding to nilpotent endomorphisms with two equally sized Jordan blocks as a means of showing that the cohomology rings of these Springer fibers are isomorphic to the center of the algebras appearing in his groundbreaking work on the categorification of the Jones polynomial~\cite{Kho00,Kho02}. He also conjectured that the topological models are in fact homeomorphic to the corresponding Springer fibers~\cite[Conjecture 1]{Kho04}. This conjecture was proven independently by Wehrli~\cite{Weh09} and Russell-Tymoczko~\cite[Appendix]{RT11} using results contained in \cite{CK08}. The constructions and results were generalized to all two-row Springer fibers of type $A$ in \cite{Rus11}. In this article we define topological models for all two-row Springer fibers associated with the even orthogonal (type $D$) and the symplectic group (type $C$) and prove that they are homeomorphic to their corresponding Springer fiber.  

We fix an even positive integer $n=2m$ and let $\beta_D$ (resp.\ $\beta_C$) be a nondegenerate symmetric (resp.\ symplectic) bilinear form on $\mathbb C^n$ and let $O(\mathbb C^n,\beta_D)$ (resp.\ $Sp(\mathbb C^n,\beta_C)$) be the corresponding isometry group with Lie algebra $\mathfrak{so}(\mathbb C^n,\beta_D)$ (resp.\ $\mathfrak{sp}(\mathbb C^n,\beta_C)$). The group $O(\mathbb C^n,\beta)$ (resp.\ $Sp(\mathbb C^n,\beta_C)$) acts on the affine variety of nilpotent elements $\mathcal N_D\subseteq\mathfrak{so}(\mathbb C^n,\beta_D)$ (resp.\ $\mathcal N_C\subseteq\mathfrak{sp}(\mathbb C^n,\beta_C)$) by conjugation and it is well known that the orbits under this action are in bijective correspondence with partitions of $n$ in which even (resp.\ odd) parts occur with even multiplicity \cite{Wil37,Ger61}. The parts of the partition associated to the orbit of an endomorphism encode the sizes of the Jordan blocks in Jordan normal form. 

Given a nilpotent endomorphism $x\in\mathcal N_D$, the associated {\it (algebraic) Springer fiber} $\mathcal Fl^{x}_D$ {\it of type $D$} is defined as the projective variety consisting of all full isotropic (with respect to $\beta_D$) flags $\{0\}=F_0\subsetneq F_1 \subsetneq \ldots \subsetneq F_m$ in $\mathbb C^n$ which satisfy the condition $xF_i \subseteq F_{i-1}$ for all $i \in \{1,\ldots,m\}$. Analogously one defines the Springer fiber of type $C$ (simply replace all the $D$'s in the definition by $C$'s). These varieties naturally arise as the fibers of a resolution of singularities of the nilpotent cone, see e.g.\ \cite[Chapter 3]{CG97}. In general they are not smooth and decompose into many irreducible components. 

The goal is to understand the topology of the irreducible components of the Springer fibers and their intersections explicitly and provide a combinatorial description. In general this is a very difficult problem (even in type $A$). Thus, we restrict ourselves to {\it two-row Springer fibers}, i.e.\ we only consider endomorphisms of Jordan type $(n-k,k)$, where $k\in\{1,\ldots,m\}$. Note that for type $D$ (resp. type $C$) that means that either $k=m$ or $k$ is odd (resp. even). Since the Springer fiber depends (up to isomorphism) only on the conjugacy class of the chosen endomorphism it makes sense to speak about the $(n-k,k)$ Springer fiber, denoted by $\mathcal Fl^{n-k,k}_D$ (resp. $\mathcal Fl^{n-k,k}_C$), without further specifying the nilpotent endomorphism. 

Henceforth, we fix a two-row partition $(n-k,k)$ labelling a nilpotent orbit of type $D$. Note that every two-row partition for an orbit of type $C$ arises from a two-row partition of type $D$ by subtracting $1$ in both parts of the type $D$ partition. 

Consider a rectangle in the plane together with a finite collection of vertices evenly spread along the upper horizontal edge of the rectangle. A {\it cup diagram} is a non-intersecting diagram inside the rectangle obtained by attaching lower semicircles called {\it cups} and vertical line segments called {\it rays} to the vertices. We require that every vertex is joined with precisely one endpoint of a cup or ray. Moreover, a ray always connects a vertex with a point on the lower horizontal edge of the rectangle. Additionally, any cup or ray for which there exists a path inside the rectangle connecting this cup or ray to the right edge of the rectangle without intersecting any other part of the diagram may be equipped with one single dot. We do not distinguish between diagrams which are related by a planar isotopy fixing the boundary. Here are two examples: 
\[
\begin{tikzpicture}[scale=.8]
\draw[very thin] (-.5,0) -- (3.5,0) -- (3.5,-1.2) -- (-.5,-1.2) -- cycle;
\draw[thick] (0,0) .. controls +(0,-.5) and +(0,-.5) .. +(.5,0);
\draw[thick] (1.5,0) .. controls +(0,-1) and +(0,-1) .. +(1.5,0);
\draw[thick] (2,0) .. controls +(0,-.5) and +(0,-.5) .. +(.5,0);

\draw[thick] (1,0) -- +(0,-1.2);
\begin{footnotesize}
\node at (0,.2) {1};
\node at (.5,.2) {2};
\node at (1,.2) {3};
\node at (1.5,.2) {4};
\node at (2,.2) {5};
\node at (2.5,.2) {6};
\node at (3,.2) {7};
\end{footnotesize}
\end{tikzpicture}
\hspace{3em}
\begin{tikzpicture}[scale=.8]
\draw[very thin] (-.5,0) -- (3.5,0) -- (3.5,-1.2) -- (-.5,-1.2) -- cycle;
\draw[thick] (0,0) .. controls +(0,-.5) and +(0,-.5) .. +(.5,0);
\draw[thick] (1.5,0) .. controls +(0,-1) and +(0,-1) .. +(1.5,0);
\draw[thick] (2,0) .. controls +(0,-.5) and +(0,-.5) .. +(.5,0);

\fill[thick] (2.25,-.74) circle(3pt);

\draw[thick] (1,0) -- +(0,-1.2);
\fill[thick] (1,-.6) circle(3pt);
\begin{footnotesize}
\node at (0,.2) {1};
\node at (.5,.2) {2};
\node at (1,.2) {3};
\node at (1.5,.2) {4};
\node at (2,.2) {5};
\node at (2.5,.2) {6};
\node at (3,.2) {7};
\end{footnotesize}
\end{tikzpicture}
\]

Let $\mathbb B^{n-k,k}$ denote the set of all cup diagrams on $m$ vertices with $\lfloor\frac{k}{2}\rfloor$ cups. This set decomposes as a disjoint union $\mathbb B^{n-k,k} = \mathbb B^{n-k,k}_{\mathrm{even}} \sqcup \mathbb B^{n-k,k}_{\mathrm{odd}}$, where $\mathbb B^{n-k,k}_{\mathrm{even}}$ (resp. $\mathbb B^{n-k,k}_{\mathrm{odd}}$) consists of all cup diagrams with an even (resp. odd) number of dots.

Let $\mathbb S^2\subseteq\mathbb R^3$ be the standard unit sphere on which we fix the points $p=(0,0,1)$ and $q=(1,0,0)$. Given a cup diagram $\ba\in\mathbb B^{n-k,k}$, we define $S_\ba\subseteq\left(\mathbb S^2\right)^m$ as the submanifold consisting of all $(x_1,\ldots,x_m)\in\left(\mathbb S^2\right)^m$ which satisfy the relations $x_i=-x_j$ (resp.\ $x_i=x_j$) if the vertices $i$ and $j$ are connected by an undotted cup (resp.\ dotted cup). Moreover, we impose the relations $x_i=p$ if the vertex $i$ is connected to a dotted ray and $x_i=-p$ (resp.\ $x_i=q$) if $i$ is connected to an undotted ray which is the rightmost ray in $\ba$ (resp.\ not the rightmost ray). The {\it topological Springer fiber $\mathcal S^{n-k,k}_D$ of type $D$} is defined as the union
\[
\mathcal S^{n-k,k}_D:=\bigcup_{\ba\in\mathbb B^{n-k,k}}S_\ba \subseteq\left(\mathbb S^2\right)^m.
\]
The above definition generalizes the construction of the topological Springer fiber in \cite[\S4.1]{ES12} from the equal row case to the general two row case and returns (up to a sign convention) the definition in the equal row case (cf.\ Remark \ref{rem:relation_to_ES_model} for the precise relationship).      

The first main result (cf.\ Theorem~\ref{thm:main_result_1}) of this article proves a conjecture by Ehrig and Stroppel \cite[Conjecture C]{ES12} on the topology of Springer fibers of type $D$ corresponding to partitions with two equal parts and at the same time extends the result to all two-row Springer fibers.  
\begin{introthm} \label{introthm_1}
There exists a homeomorphism $\mathcal S^{n-k,k}_D\cong\mathcal Fl^{n-k,k}_D$ such that the images of the $S_\ba$ are irreducible components of $\mathcal Fl^{n-k,k}_D$ for all $\ba\in\mathbb B^{n-k,k}$.
\end{introthm}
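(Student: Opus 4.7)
The plan is to construct an explicit continuous map $\Psi \colon \mathcal{S}^{n-k,k}_D \to \mathcal Fl^{n-k,k}_D$, prove it is a bijection, and observe that since both spaces are compact Hausdorff a continuous bijection is automatically a homeomorphism; the final task is then to check that each $\Psi(S_\ba)$ is an irreducible component. To set things up I would fix a convenient nilpotent $x\in\mathcal N_D$ of Jordan type $(n-k,k)$ together with a Jordan-type basis of $\mathbb{C}^n$ adapted to $\beta_D$, and identify each copy of $\mathbb{S}^2$ with $\mathbb{P}^1$ so that the distinguished points $p$, $-p$, $q$ correspond to three prescribed lines.

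For each diagram $\ba\in\mathbb{B}^{n-k,k}$, I would define $\Psi|_{S_\ba}$ by producing the flag $(F_i)$ step by step. At vertex $i$ the coordinate $x_i\in\mathbb{S}^2\cong\mathbb{P}^1$ selects a one-dimensional line inside a two-dimensional subspace $V_i\subseteq\mathbb{C}^n$ built from the Jordan basis and the combinatorics of $\ba$, and one sets $F_i:=F_{i-1}+\mathbb{C} u_i$ for a chosen representative $u_i$. The identifications $x_i=-x_j$ on an undotted cup and $x_i=x_j$ on a dotted cup are designed precisely to force the isotropy condition $\beta_D(u_i,u_j)=0$ together with $xF_i\subseteq F_{i-1}$; the ray conventions $x_i=-p$ (non-rightmost undotted), $x_i=q$ (rightmost undotted), and $x_i=p$ (dotted) ensure that the flag contains the correct distinguished basis vectors. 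Gluing these pieces consistently on the overlaps of the $S_\ba$ is a direct check and produces a continuous map on all of $\mathcal{S}^{n-k,k}_D$.

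To prove bijectivity I would first treat surjectivity as a counting problem: known parameterizations of irreducible components of classical Springer fibers (Spaltenstein, van Leeuwen) show that the number and dimension of irreducible components of $\mathcal Fl^{n-k,k}_D$ match $|\mathbb{B}^{n-k,k}|$ and $\lfloor k/2 \rfloor$ respectively, so it suffices to check that each $\Psi(S_\ba)$ is an irreducible component of the expected dimension. Injectivity is then established by reversing the construction: from a flag in the image one recovers the rays (via which distinguished basis vectors appear), the cups (via which step-pairs are pinched together by $\beta_D$), and the dots (via a discrete sign invariant attached to each connected arc or ray).

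The hardest step, I expect, will be the careful handling of the dot decoration and its compatibility with the two $SO(n)$-cosets of $O(n)$ that refine the components of the type $D$ Springer fiber beyond what occurs in type $A$. In the Khovanov--Wehrli--Russell--Tymoczko--Russell type $A$ setting there is no analogue of the dotting, so this is precisely where the novelty of the type $D$ case is concentrated; it is also where the decomposition $\mathbb{B}^{n-k,k}=\mathbb{B}^{n-k,k}_{\mathrm{even}}\sqcup\mathbb{B}^{n-k,k}_{\mathrm{odd}}$ must be reconciled with the corresponding geometric decomposition of $\mathcal Fl^{n-k,k}_D$. Once the correct sign rules governing $p$, $-p$, $q$ and the dot placements are pinned down and shown to be globally consistent, the remainder of the argument follows the established type $A$ template.
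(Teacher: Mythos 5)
Your overall strategy (exhibit explicit subsets of the flag variety matching the $S_\ba$, show each is an irreducible component, and conclude by the component count from the signed-domino-tableau parameterization that the union is all of $\mathcal Fl^{n-k,k}_D$) is the same skeleton the paper uses, but the proposal has a genuine gap at its core construction. You propose that at vertex $i$ the coordinate $x_i\in\mathbb S^2\cong\mathbb P^1$ selects a line inside a two-dimensional subspace $V_i\subseteq\mathbb C^n$ ``built from the Jordan basis and the combinatorics of $\ba$,'' and that $F_i:=F_{i-1}+\mathbb C u_i$. This does not work as stated: except in degenerate situations, the two-dimensional space in which the new line must be chosen is not fixed by the vertex and the Jordan basis but depends on the flag already constructed (think of a cup nested inside another cup, or cups separated by rays); the conditions $xF_i\subseteq F_{i-1}$ and isotropy constrain the new line relative to $F_{i-1}$, which is itself moving. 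Resolving exactly this issue is why the paper does not work inside $\mathbb C^n$ at all, but embeds the Springer fiber into the auxiliary smooth variety $Y_m$ built from a nilpotent with two \emph{equal} Jordan blocks on $\mathbb C^{2N}$, where the Cautis--Kamnitzer lemma identifies the varying slot $z^{-1}F_{i-1}\cap F_{i-1}^{\perp}$ with a fixed $\mathbb C^2$ and yields the global diffeomorphism $\phi_m\colon Y_m\to(\mathbb P^1)^m$. Within $\mathbb C^n$ with unequal block sizes that lemma fails, so your map $\Psi$ is not yet defined, and injectivity/continuity (which in the paper are free, since everything is the restriction of the global diffeomorphism $\phi_m^{-1}\circ\gamma_{n-k,k}$) would each require real arguments in your setup.

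The second understatement is the claim that checking the images are irreducible components and that the sign/dot rules are ``globally consistent'' is a direct check following the type $A$ template. In the paper this is the technical heart: one proves by induction on the number of \emph{undotted} cups that each $\phi_m^{-1}(T_\ba)$ is a component of the embedded Springer fiber, and the base case (diagrams with only dotted cups and rays) is precisely where the type $A$ methods break down; it requires verifying isotropy and tracking the Jordan types of all the induced maps $z^{(i)}$ via carefully constructed ``special simultaneous Jordan systems,'' so that van Leeuwen's map $\mathcal S^x_{\beta}$ sends every flag in the candidate set to the predicted admissible domino tableau, after which a dimension-and-connectedness argument identifies the set with one component. Your counting argument for surjectivity is sound in outline (it is what the paper does, via the bijection between $\mathbb B^{n-k,k}$ and signed domino tableaux), but it only kicks in after that component statement is established, which your proposal assumes rather than proves.
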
 

The Springer fiber $\mathcal Fl^{n-k,k}_D$ decomposes into two connected components. Under the inverse of the homeomorphism in Theorem \ref{introthm_1} the two connected components of $\mathcal Fl^{n-k,k}_D$ are mapped onto $\mathcal S^{n-k,k}_{D,\mathrm{odd}}:=\bigcup_{\ba\in\mathbb B^{n-k,k}_\mathrm{odd}}S_\ba$ and $\mathcal S^{n-k,k}_{D,\mathrm{even}}:=\bigcup_{\ba\in\mathbb B^{n-k,k}_\mathrm{even}}S_\ba$, respectively. Let $\mathcal Fl^{n-k,k}_{D,\mathrm{odd}}$ denote the image of $\mathcal S^{n-k,k}_{D,\mathrm{odd}}$ under the homeomorphism $\mathcal S^{n-k,k}_D\cong\mathcal Fl^{n-k,k}_D$, i.e.\ it is one of the connected components of $\mathcal Fl^{n-k,k}_D$. The following result relates the two-row Springer fibers of type $C$ and $D$:

\begin{introthm} \label{introthm_2}
There exists an isomorphism of algebraic varieties $\mathcal Fl^{n-k,k}_{D,\mathrm{odd}}\cong\mathcal Fl^{n-k-1,k-1}_C$, i.e.\ the $(n-k-1,k-1)$ Springer fiber of type $C$ is isomorphic (as an algebraic variety) to one of the connected components of the $(n-k,k)$ Springer fiber of type $D$, which can be written down explicitly. In particular, the topological model of the type $D$ Springer fiber also provides a topological model for the type $C$ Springer fiber. More precisely, we have a homeomorphism $\mathcal S^{n-k,k}_{D,\mathrm{odd}}\cong\mathcal Fl^{n-k-1,k-1}_C$.
\end{introthm}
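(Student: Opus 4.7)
The plan is to construct an explicit algebraic isomorphism $\Phi\colon\mathcal Fl^{n-k,k}_{D,\mathrm{odd}}\to\mathcal Fl^{n-k-1,k-1}_C$, from which the homeomorphism $\mathcal S^{n-k,k}_{D,\mathrm{odd}}\cong\mathcal Fl^{n-k-1,k-1}_C$ then follows by composition with the homeomorphism supplied by Theorem~\ref{introthm_1}. To set the stage, I would fix Jordan-form representatives $x_D\in\mathfrak{so}(\mathbb C^n,\beta_D)$ of type $(n-k,k)$ and $x_C\in\mathfrak{sp}(\mathbb C^{n-2},\beta_C)$ of type $(n-k-1,k-1)$, together with convenient Gram matrices for $\beta_D$ and $\beta_C$, so that everything can be verified in coordinates.

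The central observation I would exploit is that $U:=\ker(x_D)\subseteq\mathbb C^n$ is a two-dimensional isotropic subspace satisfying $U^\perp=\im(x_D)$, and therefore of dimension $n-2$. On $U^\perp$ the formula $\omega(v,w):=\beta_D(v,y)$, where $y\in\mathbb C^n$ is any preimage of $w$ under $x_D$, is well-defined (the ambiguity lies in $U$, and $\beta_D(U^\perp,U)=0$), is antisymmetric (combining the symmetry of $\beta_D$ with the skew-adjointness of $x_D$ for $\beta_D$), is non-degenerate, and is preserved by $x_D|_{U^\perp}$, whose Jordan type is $(n-k-1,k-1)$. This identifies $(U^\perp,x_D|_{U^\perp},\omega)$ with a concrete realization of the type $C$ data. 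I would then define $\Phi(F_1\subset\cdots\subset F_m):=(x_D F_2\subset\cdots\subset x_D F_m)$ and verify directly that on the locus where $F_{i+1}\cap U=F_1$ (which is open and dense in each component), the image flag has the correct dimensions and lies in $\mathcal Fl^{n-k-1,k-1}_C$: $\omega$-isotropy of each $x_D F_{i+1}$ follows from the $\beta_D$-isotropy of $F_{i+1}$, and the $x_D$-stability condition transfers directly from the type $D$ flag condition.

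The main technical obstacle I anticipate is extending $\Phi$ across the degenerate locus where some $F_j$ already contains $U$, since there the naive formula drops dimension. I expect the distinction between the two components of $\mathcal Fl^{n-k,k}_D$ to play a decisive role here: the classical orientation invariant on maximal isotropic subspaces separating the two components should correspond, via Theorem~\ref{introthm_1} and the cup-diagram parity, to a consistent choice of line in $U$, so that precisely on the odd component $\Phi$ admits a well-defined algebraic extension (the even component being sent to the same target by a conjugate construction). Once this is done, the inverse $\Phi^{-1}$ is constructed by reversing the procedure: given $G_\bullet$ one sets $F_1$ to the distinguished line in $U$, lifts the remaining $G_i$ along $x_D$ to recover the $F_{i+1}$, and completes with the unique maximal isotropic $F_m$ selected by the odd orientation. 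Since both $\Phi$ and $\Phi^{-1}$ are morphisms of projective varieties, $\Phi$ is an isomorphism of algebraic varieties, and the topological statement follows immediately.
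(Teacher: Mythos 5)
Your map $\Phi(F_1\subset\cdots\subset F_m)=(x_DF_2\subset\cdots\subset x_DF_m)$ is a genuinely different route from the one taken in the paper, where the isomorphism is induced by the much simpler truncation map $\pi_m(F_1,\ldots,F_m)=(F_1,\ldots,F_{m-1})$ (injective on a connected component by Remark~\ref{rem:companion_flag}, landing in the embedded $\mathcal Fl^{n-k-1,k-1}_C$ componentwise by Proposition~\ref{prop:preimage_contained}, and surjective by the count of irreducible components coming from Lemma~\ref{lem:bijection_tableaux_cups} and Lemma~\ref{lem:bijection_tableaux_C_D}). Unfortunately your map does not work, and the gap is not just the extension problem you flag: the claim that the locus where $F_{i+1}\cap\ker(x_D)=F_1$ is open and dense in each component is false. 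There are entire irreducible components of $\mathcal Fl^{n-k,k}_{D,\mathrm{odd}}$ contained in your degenerate locus. Already for $(n-k,k)=(5,3)$, Example~\ref{ex:flags_in_irred_comp} exhibits two irreducible components (one in each connected component of the Springer fiber) all of whose flags have $F_2=\spn(e_1,f_1)=\ker(x_D)$; on such a component $x_DF_2=\{0\}$ identically, so the tuple $(x_DF_2,\ldots,x_DF_m)$ has the wrong dimensions everywhere, and since the flags of that component differ only in $F_1$, which $\Phi$ never sees, the whole one-dimensional component is contracted to a point. A map that degenerates identically on a component cannot be repaired by extending from a dense open subset, because there is nothing to extend from there.

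The mechanism you hope will rescue the construction---that the orientation invariant separating the two connected components selects a line in $\ker(x_D)$ so that the extension exists precisely on the odd component---also cannot work: the degeneracy condition $\ker(x_D)\subseteq F_j$ for $j\le m-1$ involves only $F_1,\ldots,F_{m-1}$, while the two connected components differ only in the choice of $F_m$ (Remark~\ref{rem:companion_flag}), so the degenerate locus meets both components in exactly the same way. The preliminary linear algebra in your proposal is correct and is essentially the content underlying the embeddings of Section~\ref{sec:embedded_Springer_fiber}: $\ker(x_D)$ is two-dimensional with $\ker(x_D)^{\perp}=\im(x_D)$, the form $\omega(v,w)=\beta_D(v,y)$ with $x_Dy=w$ is a well-defined nondegenerate symplectic form on $\im(x_D)$ preserved by $x_D|_{\im(x_D)}$, which has Jordan type $(n-k-1,k-1)$. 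The error lies in which map between the flag varieties this suggests: the isomorphism is obtained by forgetting the top space $F_m$ of the isotropic flag (using that $F_m$ is determined by $F_{m-1}$ within a fixed connected component), not by applying $x_D$ and shifting the indices; with your map even injectivity on the good locus would still need an argument, but the collapse of whole components is already fatal.
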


In \cite{HL14} two-row Slodowy slices of type $C$ and $D$ were studied via fixed-point subvarieties of certain Nakajima quiver varieties arising from diagram automorphisms. The authors show that the Slodowy slice of type $D$ to the orbit with Jordan type $(n-k,k)$ is isomorphic to the Slodowy slice of type $C$ to the orbit with Jordan type $(n-k-1,k-1)$. They also ask whether there is an isomorphism between the resolutions of these singular affine varieties or an isomorphism between the corresponding Springer fibers (cf.\ \cite[1.3]{HL14}). Theorem \ref{introthm_2} provides an affirmative answer to the latter question.

Since nilpotent orbits of the odd orthogonal group (type $B$) are parameterized by partitions in which even parts occur with even multiplicity it follows that there cannot exist a two-row partition labelling a nilpotent orbit of type $B$ and hence there are no two-row Springer fibers of type $B$. The only interesting Springer fibers of type $B$ would correspond to nilpotent endomorphisms with at least three Jordan blocks (every one-row Springer fiber consists of a single flag only). Those are a lot more difficult to treat and so far there has been no significant progress (not even in type $A$) in determining their topology in an explicit and combinatorially satisfying way (at least there are no results comparable to those obtained in e.g.\ \cite{Fun03} or \cite{Rus11}).

\subsection*{Overview of the article}
In the following we discuss the contents of this article in more detail and sketch the ideas behind the proofs of the main theorems. 

In Section~\ref{sec:Algebraic_Springer_fibers} we recall basic definitions and facts about the (algebraic) Springer fibers of type $C$ and $D$ and review some known results concerning the combinatorics of the irreducible components. In particular, we review the parameterization of the irreducible components of the Springer fiber in terms of signed domino tableaux as introduced by van Leeuwen in his thesis \cite{vL89} based on earlier work by Spaltenstein~\cite{Spa82} (see also \cite{Pie04}). 

In Section~\ref{sec:Topological_Springer_fibers} the topological Springer fibers are defined and in Proposition \ref{prop:top_intersections} we provide a combinatorial description of the topology of the pairwise intersections of the submanifolds $S_\ba\subseteq\left(\mathbb S^2\right)^m$, $\ba\in\mathbb B^{n-k,k}$, which yields (in connection with Theorem~\ref{introthm_1}) a combinatorial description of the topology of intersections of the irreducible components of $\mathcal Fl^{n-k,k}_D$. In contrast to type $A$, the intersections of the Springer fiber $\mathcal Fl^{n-k,k}_D$ cannot be described using the highest weight Lie theory combinatorics \cite{LS13,ES13} if $m\neq k$ (see also the discussion in \cite[\S6.5]{ES12}). In particular, the convolution algebras arising from the Springer fibers $\mathcal Fl^{n-k,k}_D$ (by mimicking the approach of \cite{SW12}) are in general not isomorphic to the corresponding Khovanov algebras of type $D$ constructed in \cite{ES13} (this is true if and only if $k=m$).

Sections~\ref{sec:embedded_Springer_fiber} and \ref{sec:topology_irred_comp} are concerned with the proof of the main theorems. The main idea is the following:

Let $N>0$ be a large integer and let $z\colon\mathbb C^{2N} \to \mathbb C^{2N}$ be a nilpotent linear operator with two equally-sized Jordan blocks. In \cite[\S2]{CK08} the authors define a smooth projective variety 
\[
Y_m=\big\{\left(F_1,\ldots,F_m\right) \mid F_i \subseteq\mathbb C^{2N} \textrm{ has dimension }i,\, F_1\subseteq\ldots\subseteq F_m,\, zF_i\subseteq F_{i-1}\big\}
\]
and construct an explicit diffeomorphism $\phi_m\colon Y_m\xrightarrow\cong \left(\mathbb P^1\right)^m$. The variety $Y_m$ should be seen as a compactification of the preimage of a Slodowy slice of type $A$ under the Springer resolution. The diffeomorphism $\phi_m$ also plays a crucial role in establishing topological models for the Springer fibers of type $A$ which are naturally embedded in $Y_m$ (cf.\ \cite{Weh09} and \cite{Rus11}). It turns out that the two-row Springer fibers of type $D$, resp.\ of type $C$, can also be embedded into $Y_m$, resp.\ $Y_{m-1}$ (see Section~\ref{sec:embedded_Springer_fiber}).

Furthermore, we introduce a diffeomorphism $\gamma_{n-k,k}\colon\left(\mathbb S^2\right)^m\to\left(\mathbb P^1\right)^m$ (unlike the diffeomorphism $\phi_m$ this diffeomorphism actually depends on the partition). This diffeomorphism does not play a vital role and it is only introduced for cosmetic reasons. 

In order to prove Theorem~\ref{introthm_1} one needs to check that the image of $\mathcal S^{n-k,k}_D\subseteq\left(\mathbb S^2\right)^m$ under the diffeomorphism $\phi_m^{-1}\circ\gamma_{n-k,k}$ is the embedded Springer fiber $\mathcal Fl^{n-k,k}_D\subseteq Y_m$. In Lemma \ref{lem:sphere_vs_projective_space} we provide the first step by giving an explicit description of the image of $\mathcal S^{n-k,k}_D$ under the map $\gamma_{n-k,k}$. The following picture summarizes the results and constructions discussed so far:
\[
\begin{tikzpicture}[>=angle 90]
\node at (0,0) {$\left(\mathbb S^2\right)^m$};
\node at (4.5,0) {$\left(\mathbb P^1\right)^m$};
\node at (9,0) {$Y_m$};
\node at (-.1,-1.6) {$\mathcal S^{n-k,k}_D$};
\node at (-.1,-2) {$\shortparallel$};
\node at (-.1,-2.6) {$\bigcup\limits_{\ba\in\mathbb B^{n-k,k}}S_\ba$};
\node at (4.5,-1.6) {$\gamma_{n-k,k}\left(\mathcal S^{n-k,k}_D\right)$};
\node at (4.5,-2) {$\shortparallel$};
\node at (4.5,-2.6) {$\bigcup\limits_{\ba\in\mathbb B^{n-k,k}}\gamma_{n-k,k}\left(S_\ba\right)$};
\draw[dotted] (5.25,-2.45) ellipse (1.2cm and .35cm);
\node at (9.1,-1.6) {$\mathcal Fl^{n-k,k}_D$};

\draw[dotted] (9,-2.7) ellipse (2cm and .7cm);
\node at (9,-2.4) {\footnotesize Does $\phi_m^{-1}$ restrict to};
\node at (9,-2.7) {\footnotesize a homeomorphism with};
\node at (9,-3) {\footnotesize image $\mathcal Fl^{n-k,k}_D$?};
\draw[dotted] (7.2,-1.75) -- (7.5,-2.25);

\draw[dotted] (10.7,-.3) ellipse (1.2cm and .5cm);
\node at (10.7,-.2) {\footnotesize embedding of};
\node at (10.7,-.5) {\footnotesize Lemma \ref{lem:embedded_Springer_fiber}};
\draw[dotted] (9.6,-.5) -- (9,-.7);

\draw[dotted] (-1.5,-.95) ellipse (1cm and .7cm);
\node at (-1.5,-.6) {\footnotesize described};
\node at (-1.5,-.9) {\footnotesize explicitly in};
\node at (-1.5,-1.2) {\footnotesize Lemma \ref{lem:sphere_vs_projective_space}};
\draw[dotted] (-1.5,-1.7) .. controls (-1.3,-3.4) .. (4.8,-2.8);

\draw[->] (.6,0) -- (3.8,0);
\node at (2,.2) {\footnotesize $\gamma_{n-k,k}$};
\node at (2,-.2) {\footnotesize $\cong$};

\draw[->] (.6,-1.6) -- (3,-1.6);
\node at (1.8,-1.4) {\footnotesize $\gamma_{n-k,k}\vert_{\mathcal S^{n-k,k}_D}$};
\node at (1.8,-1.8) {\footnotesize $\cong$};

\draw[->] (5.1,0) -- (8.6,0);
\node at (6.9,.2) {\footnotesize $\phi_m^{-1}$};
\node at (6.9,-.2) {\footnotesize $\cong$};

\draw[->,dashed] (5.9,-1.6) -- (8.3,-1.6);
\node at (7.1,-1.4) [circle,draw,dotted] {\footnotesize $\cong$};

\draw[right hook->] (0,-1.3) -- (0,-.4);
\draw[right hook->] (4.5,-1.3) -- (4.5,-.4);
\draw[right hook->] (9,-1.3) -- (9,-.4);
\end{tikzpicture}
\]
In Section~\ref{sec:topology_irred_comp} we answer the remaining question in the picture above by showing that $\phi_m^{-1}$ does indeed restrict to a homeomorphism $\gamma_{n-k,k}\left(\mathcal S^{n-k,k}_D\right)\cong\mathcal Fl^{n-k,k}_D$. Note that it suffices to prove the following statement (cf. Proposition \ref{prop:preimage_contained}):
\begin{introprop}
The preimages of the sets $\gamma_{n-k,k}(S_\ba)$ under $\phi_m$ are pairwise different irreducible components of $\mathcal Fl^{n-k,k}_D\subseteq Y_m$ for all $\ba\in\mathbb B^{n-k,k}$. 
\end{introprop}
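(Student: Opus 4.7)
The plan is to compute the preimage $Z_\ba := \phi_m^{-1}\bigl(\gamma_{n-k,k}(S_\ba)\bigr)$ explicitly as a subvariety of $Y_m$, verify that it lies in $\mathcal Fl^{n-k,k}_D$, and then argue that the $Z_\ba$ exhaust the irreducible components. Lemma~\ref{lem:sphere_vs_projective_space} provides concrete equations on $(\mathbb P^1)^m$ cutting out $\gamma_{n-k,k}(S_\ba)$ which mirror the cup/ray/dot structure of $\ba$; combining this with the explicit formulae for $\phi_m$ from \cite[\S2]{CK08} yields a flag-by-flag description of $Z_\ba$ inside $Y_m$. The central step is then to check that each such flag $(F_1,\ldots,F_m)$ automatically satisfies the isotropy condition $F_i \subseteq F_i^{\perp_{\beta_D}}$ for every $i$, so that $Z_\ba$ really lands in $\mathcal Fl^{n-k,k}_D$ and not merely in $Y_m$.

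Granted this inclusion, irreducibility of $Z_\ba$ is essentially free. Since $\gamma_{n-k,k}$ and $\phi_m$ are diffeomorphisms, $Z_\ba$ is homeomorphic to $S_\ba$, which by construction is a product of $\lfloor k/2\rfloor$ copies of $\mathbb S^2$ (one sphere per cup, all other coordinates being frozen at $\pm p$ or $q$). Hence $Z_\ba$ is a closed, smooth, irreducible subvariety of $\mathcal Fl^{n-k,k}_D$ of complex dimension $\lfloor k/2\rfloor$, which matches the known dimension of the irreducible components of $\mathcal Fl^{n-k,k}_D$ recalled in Section~\ref{sec:Algebraic_Springer_fibers}. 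Consequently each $Z_\ba$ is contained in a single irreducible component, and the dimension match forces equality.

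To see that the $Z_\ba$ are pairwise distinct (and therefore exhaust all components), I would appeal to the parameterization of irreducible components by signed domino tableaux of shape $(n-k,k)$ discussed in Section~\ref{sec:Algebraic_Springer_fibers}: there is a standard bijection between $\mathbb B^{n-k,k}$ and the relevant set of tableaux, matching cardinalities on the two sides. Injectivity of $\ba \mapsto Z_\ba$ itself is checked by exhibiting a distinguished ``test flag'' in $Z_\ba$ — for instance the one produced by specialising each free sphere coordinate to a generic value — from which the cup diagram $\ba$, together with its dot decorations, can be reconstructed by reading off which pairs of coordinates of the flag coincide or are antipodal and which coordinates take the values $p$, $-p$, or $q$.

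The main obstacle is the isotropy verification of the first paragraph. The diffeomorphism $\phi_m$ is built purely from the type $A$ nilpotent $z$ on $\mathbb C^{2N}$ and knows nothing about $\beta_D$, so the entire point of the antipodal ($x_i = -x_j$) versus equal-point ($x_i = x_j$) distinction between undotted and dotted cups, and of the three separate ray conditions (values $p$, $-p$, $q$), must match up exactly with the orthogonality relations forced by $\beta_D$ once pushed through $\phi_m^{-1}\circ \gamma_{n-k,k}$. I expect to dispatch this by analysing each local building block of a cup diagram (undotted cup, dotted cup, undotted non-rightmost ray, undotted rightmost ray, dotted ray) separately, writing down the corresponding piece of the flag in the CK08 normal form, and checking the required orthogonality against a fixed basis in which $\beta_D$ is standard; getting the sign conventions for $\gamma_{n-k,k}$ and the distinguished points $p$, $q$ to line up consistently will be the most delicate part.
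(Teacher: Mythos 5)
Your high-level skeleton — push $S_\ba$ through $\gamma_{n-k,k}$ and $\phi_m^{-1}$, verify isotropy so the image lands in $\mathcal Fl^{n-k,k}_D$, deduce irreducibility from smoothness, connectedness and the dimension match, then exhaust the components by matching cardinalities against the signed domino tableau count — is the same as the paper's. You also correctly identify the genuine difficulty: $\phi_m$ is built from the type $A$ operator $z$ and the Hermitian form and knows nothing about $\beta_D$, so the isotropy check is where all the work lives. But the plan you offer for that check would not go through as stated. You propose a ``local building block'' analysis, treating each cup or ray separately. This works, but only for \emph{undotted} cups on adjacent vertices: the relation $l_{i+1}=l_i^{\perp}$ is exactly the defining condition of the CK08 subvariety $X^i_m$, and the morphism $q^i_m\colon X^i_m\to Y_{m-2}$ combined with Lemma~\ref{z_respects_isotropy_lem} lets you peel such a cup off and recurse; this is the paper's inductive step. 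A \emph{dotted} cup, however, imposes $l_i=l_{i+1}$, which is not an orthogonality condition and does not place the flag on any $X^i_m$, so there is no analogous peeling morphism and no way to treat the block in isolation. The paper handles the residual case (diagrams with only dotted cups and rays) by a substantially harder argument — Proposition~\ref{prop:preimage_no_undotted_cups} via Lemmas~\ref{lem:tech_lem_1}--\ref{lem:tech_lem_3} — which propagates through the flag a ``special simultaneous Jordan system'' compatible at once with $\beta_D$, with $\beta_C$, with the ambient Hermitian form, and with the collapsing map $C$ (conditions (SJS1)--(SJS3)). That apparatus is precisely what controls the cross-terms $\beta_D(v,w)$ with $v$ and $w$ arising from different dotted cups, and without something of comparable strength your block-by-block isotropy check will not close.

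Two smaller points. Pairwise distinctness of the preimages is immediate: distinct cup diagrams give distinct relation systems, hence $T_\ba\ne T_\bb$ as subsets of $(\mathbb P^1)^m$, and $\phi_m$ is a bijection; no ``test flag'' is needed for that. What does require a flag computation is identifying \emph{which} signed domino tableau indexes the component $\phi_m^{-1}(T_\ba)$, and here the paper computes the full sequence of Jordan types $J(z^{(i)}_{n-k,k})$ of the induced maps on $F_i^{\perp_D}/F_i$ to pin down the fiber of the Spaltenstein--van Leeuwen surjection $\mathcal S^x_{\beta_\epsilon}$ — a step your proposal does not anticipate. Finally, you only need isotropy of $F_m$ (the earlier $F_i$ are then automatic), and in the inductive step you obtain it from isotropy of $zF_m$ with respect to the bilinear form on the smaller space $E_{n-k-2,k-2}$; that transfer is not a formality but a short computation, which is Lemma~\ref{z_respects_isotropy_lem}.
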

Since the irreducible components of $\mathcal Fl^{n-k,k}_D$ are in bijective correspondence with cup diagrams in $\mathbb B^{n-k,k}$ we deduce that the inclusion 
\[
\phi_m^{-1}\left(\gamma_{n-k,k}\left(\mathcal S^{n-k,k}_D\right)\right)=\bigcup_{\ba\in\mathbb B^{n-k,k}} \phi_m^{-1}\left(\gamma_{n-k,k}\left(S_\ba\right)\right)\subseteq\mathcal Fl^{n-k,k}_D
\]
is in fact an equality which finishes the proof of Theorem~\ref{introthm_1}. 

In order to prove the above proposition we proceed by induction on the number of undotted cups in $\ba\in\mathbb B^{n-k,k}$ which is is more or less the same proof as in type $A$ (cf.\ \cite{Rus11}). One only needs to be careful about the additional isotropy condition (cf. Lemma~\ref{z_respects_isotropy_lem}). Thus, the main difficulty lies in establishing the induction start, i.e.\ to prove the claim for cup diagrams without any undotted cups. This is done in Proposition~\ref{prop:preimage_no_undotted_cups} (which itself is a proof by induction on the number of dotted cups) and is considered the technical heart of the argument because it requires new techniques which are not straightforward generalizations of the type $A$ case. 

In order to prove Theorem~\ref{introthm_2} we consider the surjective morphism of varieties $\pi_m\colon Y_m\twoheadrightarrow Y_{m-1}$ given by $(F_1,\ldots,F_m)\mapsto(F_1,\ldots,F_{m-1})$ and show (using similar arguments as in the proof of Theorem~\ref{introthm_1}) that the restriction of $\pi_m$ to $\mathcal Fl^{n-k,k}_{D,\mathrm{odd}}\subseteq Y_m$ yields a homeomorphism (even an isomorphism of varieties) whose image is the embedded Springer fiber $\mathcal Fl^{n-k-1,k-1}_C\subseteq Y_{m-1}$. Deleting the vector space $F_m$ of a given flag $(F_1,\ldots,F_m)\in\mathcal Fl^{n-k,k}_{D,\mathrm{odd}}$ in order to pass from type $D$ to type $C$ fits nicely into the combinatorial picture since signed domino tableaux of type $C$ can be obtained from signed domino tableaux of type $D$ by deleting the domino labelled $m$ (cf.\ Lemma~\ref{lem:bijection_tableaux_C_D} for details). 

\subsection*{Acknowledgements}
This article is part of the author's PhD thesis. The author would like to thank his advisor Catharina Stroppel for many interesting and useful discussions. 

\section{Algebraic Springer fibers} \label{sec:Algebraic_Springer_fibers}

We begin by defining the (algebraic) Springer fibers and provide an overview over some known results concerning the combinatorics of the irreducible components. The purpose is to set up notations and establish conventions used throughout this article which sometimes differ slightly from the ones in related publications. Unless stated otherwise $n=2m$ denotes an even positive integer.

\subsection{Nilpotent orbits and algebraic Springer fibers}

Let $\mathcal N\subseteq\mathfrak{sl}(\mathbb C^n)$ be the {\it nilpotent cone} consisting of all nilpotent endomorphisms of $\mathbb C^n$ (in the usual sense of linear algebra). The Jordan normal form implies that the orbits under the conjugation action of the special linear group $\mathrm{SL}(\mathbb C^n)$ on $\mathcal N$ can be parameterized in terms of {\it partitions} of $n$, i.e.\ $r$-tuples $\lambda=(\lambda_1,\ldots,\lambda_r)\in\mathbb Z^r_{>0}$, $r\in\mathbb Z_{>0}$, of positive integers such that $\lambda_1\geq\lambda_2\geq\ldots\geq\lambda_r$ and $\lambda_1+\ldots+\lambda_r=n$, where the {\it parts} $\lambda_i$ of $\lambda$ encode the sizes of the Jordan blocks of the elements contained in an orbit. Let $\mathcal P(n)$ denote the set of all partitions of $n$. 

Fix an element $\epsilon\in\{\pm 1\}$ and let $\beta_\epsilon$ be a nondegenerate bilinear form on $\mathbb C^n$ which satisfies $\beta_\epsilon(v,w)=\epsilon(v,w)$ for all $v,w\in\mathbb C^n$. Let $\mathrm{Aut}(\mathbb C^n,\beta_\epsilon)$ denote the isometry group consisting of all linear automorphisms of $\mathbb C^n$ preserving $\beta_\epsilon$. The Lie algebra $\mathfrak{aut}(\mathbb C^n,\beta_\epsilon)$ of $\mathrm{Aut}(\mathbb C^n,\beta_\epsilon)$ is the subalgebra of $\mathfrak{sl}(\mathbb C^n)$ consisting of all endomorphisms $x$ of $\mathbb C^n$ which satisfy the equation $\beta_\epsilon(x(v),w)=-\beta_\epsilon(v,x(w))$ for all $v,w\in\mathbb C^n$. Note that $\mathrm{Aut}(\mathbb C^n,\beta_1)\cong O_n(\mathbb C)$ and $\mathfrak{aut}(\mathbb C^n,\beta_{-1})\cong\mathfrak{so}_n(\mathbb C)$ if $\beta_\epsilon$ is nondegenerate and symmetric, whereas $\mathrm{Aut}(\mathbb C^n,\beta_{-1})\cong Sp_n(\mathbb C)$ and $\mathfrak{aut}(\mathbb C^n,\beta_{-1})\cong\mathfrak{sp}_n(\mathbb C)$ if $\beta$ is symplectic.

We define $\mathcal P_\epsilon(n)$ as the subset of $\mathcal P(n)$ consisting of all partitions $\lambda$ of $n$ for which the cardinality of $\{i\mid\lambda_i=j\}$ is even for all $j$ satisfying $(-1)^j=\epsilon$, i.e.\ even (resp.\ odd) parts occur with even multiplicity. We refer to the partitions in $\mathcal P_1(n)$ (resp.\ $\mathcal P_{-1}(n)$) as {\it admissible of type} $D$ (resp.\ type $C$) since they parameterize nilpotent orbits in the simple Lie algebra of the respective type. The following classification of nilpotent orbits is well known \cite{Wil37,Ger61}.

\begin{prop} \label{prop:Gerstenhaber}
The orbits under the conjugation-action of $\mathrm{Aut}(\mathbb C^n,\beta_\epsilon)$ on the variety of nilpotent elements $\mathcal N_{\mathfrak{aut}(\mathbb C^n,\beta_\epsilon)}=\mathcal N\cap\mathfrak{aut}(\mathbb C^n,\beta_\epsilon)$ are in bijective correspondence with the partitions contained in $\mathcal P_\epsilon(n)$. The parts of the partition associated with the orbit of an endomorphism encode the sizes of the Jordan blocks in Jordan normal form.
\end{prop}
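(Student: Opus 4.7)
The plan is to establish three standard ingredients: (i) invariance of the Jordan type under $\mathrm{Aut}(\mathbb C^n,\beta_\epsilon)$-conjugation; (ii) existence of a representative of every $\lambda\in\mathcal P_\epsilon(n)$; and (iii) uniqueness of the orbit given the Jordan type. The first is inherited trivially from the corresponding $\mathrm{GL}(\mathbb C^n)$-statement, so the real content lies in (ii) and (iii), and the conceptual heart is the parity obstruction.

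The central construction would be an orthogonal block decomposition. Starting from the defining relation $\beta_\epsilon(xv,w)=-\beta_\epsilon(v,xw)$ and its iterate
\[
\beta_\epsilon(x^k v, w) = (-1)^k \beta_\epsilon(v, x^k w) \qquad \text{for all } k \geq 0,
\]
I would decompose $\mathbb C^n$ as a $\beta_\epsilon$-orthogonal direct sum of $x$-stable, nondegenerate subspaces of two model types: \emph{single blocks}, consisting of a cyclic subspace $W=\mathrm{span}(v,xv,\ldots,x^{\ell-1}v)$ on which $x$ acts by a single Jordan block, and \emph{hyperbolic pairs} $W\oplus W'$ formed from two isomorphic Jordan blocks of arbitrary size with $W,W'$ both totally isotropic. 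The procedure is inductive: pick a vector $v$ of maximal Jordan height $\ell$, and case-split on whether $\beta_\epsilon(v,x^{\ell-1}v)$ vanishes. If it is nonzero, then $W$ is nondegenerate, and evaluating the displayed identity at $k=\ell-1$ together with the $\epsilon$-symmetry $\beta_\epsilon(v,w)=\epsilon\beta_\epsilon(w,v)$ forces $(-1)^{\ell-1}\epsilon=1$, which is exactly the correct parity for an unpaired block. If it vanishes, I would use nondegeneracy of $\beta_\epsilon$ to produce a companion vector $v'$ of height $\ell$ with $\beta_\epsilon(v,x^{\ell-1}v')\neq 0$, and verify that $W\oplus W'$ is a hyperbolic pair. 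Iterating on the $\beta_\epsilon$-orthogonal complement (which is again $x$-stable and nondegenerate) terminates and shows that every Jordan part $\ell$ with $(-1)^\ell=\epsilon$ occurs only inside hyperbolic pairs, hence with even multiplicity. Existence for a given $\lambda\in\mathcal P_\epsilon(n)$ then reduces to writing down explicit single blocks via the local model $\beta_\epsilon(x^iv,x^jv)=\pm\delta_{i+j,\ell-1}$ and explicit hyperbolic pairs, summed orthogonally.

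For uniqueness, given $x, x'$ of equal Jordan type, I would match the summands of their canonical decompositions, write down an explicit isometric intertwiner on each matched pair of summands, and assemble the pieces into a global element of $\mathrm{Aut}(\mathbb C^n,\beta_\epsilon)$ conjugating $x$ to $x'$. The main obstacle I expect is the paired case of the decomposition: the companion $v'$ must be chosen so that $W'$ is transverse to $W$, the sum $W\oplus W'$ is nondegenerate, and $W'$ is itself totally isotropic. Each of these conditions is straightforward to arrange individually, but achieving all three simultaneously requires iterative correction of $v'$ modulo the lower layers of the Jordan filtration of $x$. Once this canonical block decomposition and its rigidity are established, the rest reduces to standard Witt-theoretic bookkeeping.
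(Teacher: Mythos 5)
The paper offers no proof of this proposition to compare against: it is quoted as classical with the references [Wil37, Ger61], so the only meaningful comparison is with the standard argument in the literature, and your sketch is exactly that argument (orthogonal decomposition into $x$-stable single Jordan blocks and hyperbolic pairs, parity obstruction, explicit models for existence, Witt-type matching for uniqueness). The central computation is correct: from $\beta_\epsilon(x^{\ell-1}v,v)=(-1)^{\ell-1}\beta_\epsilon(v,x^{\ell-1}v)$ and $\epsilon$-symmetry, a nonzero value forces $\epsilon(-1)^{\ell-1}=1$, i.e.\ $(-1)^\ell=-\epsilon$, which is precisely the parity allowed to occur with odd multiplicity in $\mathcal P_\epsilon(n)$; and in that case the Gram matrix of $v,xv,\dots,x^{\ell-1}v$ is anti-triangular with nonzero anti-diagonal, so the cyclic block is indeed nondegenerate and splits off. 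Two refinements would tighten the write-up. First, the case split should be quantified as ``there exists a vector of maximal height $\ell$ with $\beta_\epsilon(v,x^{\ell-1}v)\neq 0$'' versus ``for all such $v$ it vanishes''; in the second case a polarization argument (applied to $v+w$ for max-height $v,w$, together with nondegeneracy) shows one is automatically in the constrained parity $(-1)^\ell=\epsilon$, so the two branches really correspond to unpaired versus forcibly paired block sizes. Second, the ``three simultaneous conditions'' you worry about in the paired case are not all needed for the inductive splitting: once $\beta_\epsilon(v,x^{\ell-1}v')\neq 0$ with both top self-pairings zero, the Gram matrix of the $2\ell$ vectors $x^iv,x^jv'$ is again anti-block-triangular with nonsingular $2\times 2$ anti-diagonal blocks, so $W\oplus W'$ is automatically a direct, nondegenerate, $x$-stable summand; making $W,W'$ totally isotropic and normalizing constants can be deferred to the uniqueness step, where over $\mathbb C$ all scalars are removed by square-root rescalings, so the remaining bookkeeping is routine. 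With these points made explicit, your plan is a complete and correct route to the statement, consistent with the cited classical sources.
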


\begin{defi} 
A {\it full isotropic flag} in $\mathbb C^n$ (with respect to $\beta_\epsilon$) is a sequence $F_\bullet$ of subspaces $\{0\}=F_0\subsetneq F_1\subsetneq\ldots\subsetneq F_m$ of $\mathbb C^n$ such that $F_m$ is isotropic with respect to $\beta_\epsilon$, i.e.\ $\beta_\epsilon$ vanishes on $F_m\times F_m$. The set of all full isotropic flags is denoted by $\mathcal Fl_{\beta_\epsilon}$.
\end{defi}

Since the inclusions of the subspaces of a flag $F_\bullet$ are strict, $F_m$ is maximal isotropic and we have $\dim(F_i)=i$ for all $i\in\{1,\ldots,m\}$. The set $\mathcal Fl_{\beta_\epsilon}$ can be equipped with the structure of a smooth projective variety, e.g.\ by identifying it with a homogeneous $\mathrm{Aut}(\mathbb C^n,\beta_\epsilon)$-space. Adding the vector spaces $F_{n-i}=F_i^\perp$ to a given full isotropic flag $F_\bullet$ (the orthogonal complement is taken with respect to $\beta_\epsilon$) defines an embedding of $\mathcal Fl_{\beta_\epsilon}$ into the {\it full flag variety $\mathcal Fl$ of type} $A$. Given any other nondegenerate symmetric (resp.\ symplectic) bilinear form $\beta$ on $\mathbb C^n$, the corresponding varieties $\mathcal Fl_\beta$ and $\mathcal Fl_{\beta_1}$ (resp.\ $\mathcal Fl_{\beta_{-1}}$) are isomorphic which allows us to speak about the {\it full flag variety of type} $D$ (resp.\ {\it type} $C$), denoted by $\mathcal Fl_D$ (resp.\ $\mathcal Fl_C$), without further specifying a nondegenerate symmetric (resp.\ symplectic) bilinear form.   

\begin{rem} \label{rem:companion_flag}
According to our conventions the full flag variety $\mathcal Fl_D$ of type $D$ is isomorphic to a quotient of $O_n(\mathbb C)$ (and not $SO_n(\mathbb C)$). Hence, it consists of two isomorphic connected components. The component containing a given flag $F_\bullet$ is determined by $F_m$. More precisely, there is a unique flag $F_\bullet'$ such that $F_i=F_i'$ for all $i\in\{1,\ldots,m-1\}$ and $F_m\neq F_m'$ and the two flags lie in different connected components (cf.\ \cite[\S1.4]{vL89} or \cite[Remark 2.2]{ES12}). 
\end{rem}

\begin{defi} \label{defi_springer_fiber}
The {\it (algebraic) Springer fiber} $\mathcal Fl_{\beta_\epsilon}^{x}$ associated with $\beta_\epsilon$ and $x \in \mathcal N_{\mathfrak{g}_\epsilon}$ is the projective subvariety of $\mathcal Fl_{\beta_\epsilon}$ consisting of all isotropic flags $F_\bullet$ which satisfy the conditions $xF_i \subseteq F_{i-1}$ for all $i\in\{1,\ldots,m\}$.
\end{defi}

If $\beta$ is another nondegenerate symmetric (resp.\ symplectic) bilinear form on $\mathbb C^n$ and $y$ a nilpotent endomorphism of $\mathbb C^n$ contained in $\mathfrak{aut}(\mathbb C^n,\beta)$, then the Springer fibers $\mathcal Fl^y_\beta$ and $\mathcal Fl^x_{\beta_\epsilon}$ are isomorphic if and only if $x$ and $y$ have the same Jordan type.  
Thus, by Proposition~\ref{prop:Gerstenhaber}, there is (up to isomorphism) precisely one Springer fiber for every admissible partition $\lambda$ of type $D$ (resp.\ type $C$) which we denote by $\mathcal Fl_D^\lambda$ (resp.\ $\mathcal Fl_C^\lambda$) assuming that some nondegenerate symmetric (resp.\ symplectic) bilinear form and a compatible nilpotent endomorphism of Jordan type $\lambda$ have been fixed beforehand. 


\subsection{Irreducible components and combinatorics} \label{sec:irred_comp}

Recall that a partition $\lambda=(\lambda_1,\ldots,\lambda_r)$ of $n$ can be depicted as a Young diagram, i.e.\ a collection of $n$ boxes arranged in $r$ left-aligned rows, where the $i$-th row consists of $\lambda_i$ boxes (this is commonly known as English notation). 

\begin{defi}
A {\it standard Young tableau of shape $\lambda$} is a filling of the Young diagram of $\lambda\in\mathcal P(n)$ with the numbers $1,2,\ldots,n$ such that each number occurs exactly once and the entries decrease in every row and column. Let $SYT(\lambda)$ be the set of all standard Young tableaux of shape $\lambda$.
\end{defi} 

\begin{ex} \label{ex:std_yng_tab}
Here is a complete list of all elements contained in the set $SYT(3,2)$:
\[
\young(543,21) \hspace{3em} \young(542,31) \hspace{3em} \young(532,41) \hspace{3em} \young(531,42) \hspace{3em} \young(541,32)
\]
\end{ex}

Let $x\in\mathcal N$ be a nilpotent endomorphism of $\mathbb C^n$ of Jordan type $\lambda$ and let $\mathcal Fl^x$ be the associated Springer fiber (of type $A$) consisting of all sequences $\{0\}=F_0\subsetneq F_1 \subsetneq\ldots\subsetneq F_n=\mathbb C^n$ of subspaces satisfying $xF_i\subseteq F_{i-1}$ for all $i\in\{1,\ldots,n\}$. By work of Spaltenstein \cite{Spa76} and Vargas \cite{Var79} there exists a surjection $\mathcal S^x\colon \mathcal Fl^x \twoheadrightarrow SYT(\lambda)$ which can be used to parameterize the irreducible components of the Springer fiber as follows:

\begin{prop} \label{prop:Spaltenstein-Vargas}
The set $SYT(\lambda)$ of standard Young tableaux of shape $\lambda$ is in bijective correspondence with the irreducible components of the Springer fiber $\mathcal Fl^x$ via the map which sends a tableau $T$ to the closure of the fiber of $\mathcal S^x$ over $T$.  
\end{prop}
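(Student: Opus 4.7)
The plan is to analyze the fibers of $\mathcal S^x$ directly, showing that each is an irreducible locally closed subvariety of $\mathcal Fl^x$ of dimension equal to $\dim \mathcal Fl^x$, and then to invoke the equidimensionality of $\mathcal Fl^x$ (also due to Spaltenstein) to conclude that the closures of these fibers are exactly the irreducible components.

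First I would recall the construction of the map $\mathcal S^x$. Given $F_\bullet \in \mathcal Fl^x$, the inclusions $xF_i \subseteq F_{i-1} \subseteq F_i$ make $x|_{F_i}$ a nilpotent endomorphism of $F_i$; let $\mu^{(i)}$ be its Jordan type, a partition of $i$. An elementary rank computation shows that $\mu^{(i-1)} \subseteq \mu^{(i)}$ differ by a single box, and labelling this added box by $n+1-i$ produces a filling of $\lambda$ by $\{1,\ldots,n\}$ whose rows and columns are decreasing, i.e., an element $\mathcal S^x(F_\bullet) \in SYT(\lambda)$. In particular $\mathcal S^x$ is well-defined and continuous for the Zariski topology on each fiber of the associated Jordan-type stratification.

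Next I would analyze the fiber $(\mathcal S^x)^{-1}(T)$ over a fixed $T \in SYT(\lambda)$ by induction on $n$. Let $c$ denote the box of $\lambda$ holding the entry $n$; since $T$ is standard, $c$ is a removable corner of $\lambda$, and deleting it yields $T' \in SYT(\lambda \setminus \{c\})$. For $F_\bullet \in (\mathcal S^x)^{-1}(T)$ the subspace $F_1$ lies in $\ker x$ and the induced nilpotent $\bar x$ on $\mathbb C^n/F_1$ has Jordan type $\lambda \setminus \{c\}$. The locus of such $F_1$ inside $\mathbb P(\ker x)$ is a locally closed irreducible subvariety whose dimension can be read off from the partition combinatorics. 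Passing to $\mathbb C^n/F_1$ identifies the fiber over a fixed admissible $F_1$ with $(\mathcal S^{\bar x})^{-1}(T')$ inside the smaller Springer fiber $\mathcal Fl^{\bar x}$, which is irreducible of known dimension by the inductive hypothesis. Hence $(\mathcal S^x)^{-1}(T)$ is an iterated fibration with irreducible base and irreducible fibers, so itself irreducible, and its total dimension agrees with Spaltenstein's formula for $\dim \mathcal Fl^x$.

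Since $SYT(\lambda)$ is finite and the fibers of $\mathcal S^x$ partition $\mathcal Fl^x$ into finitely many irreducible locally closed pieces all of the same maximal dimension, their Zariski closures are precisely the irreducible components of $\mathcal Fl^x$, and the map $T \mapsto \overline{(\mathcal S^x)^{-1}(T)}$ is the claimed bijection. The main difficulty lies in the dimension count inside the inductive step: one must verify that the codimension contributed by the choice of $F_1$ in $\mathbb P(\ker x)$ is exactly the difference between Spaltenstein's dimension formulas for $\mathcal Fl^x$ and $\mathcal Fl^{\bar x}$. The irreducibility of this $F_1$-stratum, i.e., the set of lines in $\ker x$ whose quotient has prescribed Jordan type $\lambda\setminus\{c\}$, is the other key ingredient and is established via a concrete parameterization using a Jordan basis adapted to $\lambda$.
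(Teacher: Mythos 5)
The paper does not actually prove this proposition; it is quoted from Spaltenstein and Vargas, so there is no internal argument to compare with, and your overall strategy --- show each fiber of $\mathcal S^x$ is irreducible and locally closed, that all fibers have the same dimension, and conclude that the finitely many closures are exactly the irreducible components --- is indeed the classical route. However, your inductive step has a genuine gap: it conflates the restriction-based map you define with the quotient-based one. You define $\mathcal S^x$ through the Jordan types $\mu^{(i)}$ of $x|_{F_i}$, but you then peel off $F_1$ and claim that for every $F_\bullet\in(\mathcal S^x)^{-1}(T)$ the induced map on $\mathbb C^n/F_1$ has Jordan type $\lambda\setminus\{c\}$, where $c$ is the box of $T$ containing the entry $n$. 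Neither half of this is right. First, with your labelling (the box added in passing from $\mu^{(i-1)}$ to $\mu^{(i)}$ gets the entry $n+1-i$), the entry $n$ always sits in the box added at $i=1$, i.e.\ the top-left box, which is never a removable corner; it is the box containing the entry $1$ that is removable. Second, and more seriously, the Jordan type of the quotient $\mathbb C^n/F_1$ is not determined by $T$ at all: take $n=3$, $\lambda=(2,1)$, a Jordan basis $e_1,e_2,f_1$ with $xe_2=e_1$, $xe_1=xf_1=0$, and the tableau $T$ with $\mu^{(2)}=(1,1)$. Its fiber is $\{(F_1,\ker x,\mathbb C^3)\mid F_1\subset\ker x\}\cong\mathbb P^1$, and on this single fiber the induced map on $\mathbb C^3/F_1$ has Jordan type $(2)$ for every $F_1\neq\spn(e_1)$ but type $(1,1)$ for $F_1=\spn(e_1)$. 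So the fiber of your $\mathcal S^x$ is not the iterated fibration you describe, and the key identification with $(\mathcal S^{\bar x})^{-1}(T')$ over an $F_1$-stratum of $\mathbb P(\ker x)$ breaks down.

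The gap is repairable in either of two standard ways, but one must choose coherently. Either keep the restriction-based definition and induct from the top of the flag: since $xF_n\subseteq F_{n-1}$, the subspace $F_{n-1}$ is a hyperplane containing $\im(x)$, the Jordan type of $x|_{F_{n-1}}$ is $\lambda$ minus the (removable) box of $T$ containing the entry $1$, and $(\mathcal S^x)^{-1}(T)$ fibres over the irreducible locally closed set of admissible $F_{n-1}$ with fibres the corresponding Spaltenstein piece for $x|_{F_{n-1}}$; or redefine $\mathcal S^x$ via the Jordan types of the maps induced on $\mathbb C^n/F_i$ --- the convention the paper itself uses for its type $C$/$D$ map through $F_i^\perp/F_i$ --- in which case your $F_1$-induction and your description of the admissible stratum in $\mathbb P(\ker x)$ are the correct ones. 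With either repair, your concluding logic (finitely many irreducible locally closed pieces of equal dimension, hence their closures are precisely the irreducible components) is sound, and the remaining dimension bookkeeping is exactly what Spaltenstein's formula supplies; this is the content of the results of Spaltenstein and Vargas that the paper cites in place of a proof.
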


In order to obtain a parameterization of the irreducible components of the Springer fibers of type $D$ and $C$ one has to replace standard Young tableaux by admissible domino tableaux. For the rest of this section we assume that $n=2m$ is even.    

\begin{defi} \label{domino_diag_defi} 
 A {\it domino diagram of shape $\lambda$} with $m$ dominoes is given by a partitioning of the set of boxes of the Young diagram corresponding to $\lambda$ into two-element subsets, called {\it dominoes}, such that any two associated boxes have a common vertical or horizontal edge. 
\end{defi}

We depict a domino diagram by deleting the common edge of any two boxes forming a domino.

\begin{ex}
Here is a complete list of all domino diagrams of shape $(5,3)$:
\[
\begin{tikzpicture}[scale=0.7]

\draw (0,0) -- +(0,1);
\draw (.5,0) -- +(0,1);
\draw (0,0) -- +(1.5,0);
\draw (0,1) -- +(2.5,0);
\draw (1.5,0) -- +(0,1);
\draw (1,0) -- +(0,1);
\draw (2.5,.5) -- +(0,.5);
\draw (1.5,.5) -- +(1,0);

\end{tikzpicture}
\hspace{3em}
\begin{tikzpicture}[scale=0.7]

\draw (0,0) -- +(0,1);
\draw (.5,0) -- +(0,1);
\draw (0,0) -- +(1.5,0);
\draw (0,1) -- +(2.5,0);
\draw (.5,.5) -- +(2,0);
\draw (1.5,0) -- +(0,1);
\draw (2.5,.5) -- +(0,.5);

\end{tikzpicture}
\hspace{3em}
\begin{tikzpicture}[scale=0.7]

\draw (0,0) -- +(0,1);
\draw (0,0) -- +(1.5,0);
\draw (0,1) -- +(2.5,0);
\draw (0,.5) -- +(1,0);
\draw (1.5,0) -- +(0,1);
\draw (1,0) -- +(0,1);
\draw (2.5,.5) -- +(0,.5);
\draw (1.5,.5) -- +(1,0);

\end{tikzpicture}
\]
\end{ex}

\begin{defi}
An {\it admissible domino tableau} of shape $\lambda$, where $\lambda\in\mathcal P_\epsilon(n)$, is obtained from a domino diagram of shape $\lambda$ by filling the $n$ boxes of its underlying Young diagram with the numbers $1,\ldots,m$ such that: 
\begin{enumerate}[{(\text{ADT}}1{)}]
\item Each of the numbers $1,\ldots,m$ occurs exactly twice and any two boxes with the same number form a domino.
\item The entries in each row and column are weakly decreasing from left to right and top to bottom.
\item The partition corresponding to the shape of the diagram obtained by deleting the dominoes labelled with $1,\ldots,i$ is admissible of type $D$ (resp.\ type $C$) for every $i\in\{1,\ldots,m\}$ if $\epsilon=1$ (resp.\ $\epsilon=-1$). 
\end{enumerate}
The set of all admissible domino tableaux of shape $\lambda$ is denoted by $ADT(\lambda)$.
\end{defi}

When depicting an admissible domino tableau we draw dominoes instead of paired boxes and only write a single number in every domino.

\begin{ex} \label{ex:admissible_domino_tableaux}
Here is a complete list of all admissible domino tableaux of shape $(5,3)$:
\[
\begin{tikzpicture}[scale=0.7]

\draw (0,0) -- +(0,1);
\draw (.5,0) -- +(0,1);
\draw (0,0) -- +(1.5,0);
\draw (0,1) -- +(2.5,0);
\draw (1.5,0) -- +(0,1);
\draw (1,0) -- +(0,1);
\draw (2.5,.5) -- +(0,.5);
\draw (1.5,.5) -- +(1,0);

\begin{footnotesize}
\node at (.25,.5) {4};
\node at (.75,.5) {3};
\node at (1.25,.5) {2};
\node at (2,.75) {1};
\end{footnotesize}

\end{tikzpicture}
\hspace{3em}
\begin{tikzpicture}[scale=0.7]

\draw (0,0) -- +(0,1);
\draw (.5,0) -- +(0,1);
\draw (0,0) -- +(1.5,0);
\draw (0,1) -- +(2.5,0);
\draw (.5,.5) -- +(2,0);
\draw (1.5,0) -- +(0,1);
\draw (2.5,.5) -- +(0,.5);

\begin{footnotesize}
\node at (.25,.5) {4};
\node at (1,.75) {3};
\node at (1,.25) {2};
\node at (2,.75) {1};
\end{footnotesize}

\end{tikzpicture}
\hspace{3em}
\begin{tikzpicture}[scale=0.7]

\draw (0,0) -- +(0,1);
\draw (.5,0) -- +(0,1);
\draw (0,0) -- +(1.5,0);
\draw (0,1) -- +(2.5,0);
\draw (.5,.5) -- +(2,0);
\draw (1.5,0) -- +(0,1);
\draw (2.5,.5) -- +(0,.5);

\begin{footnotesize}
\node at (.25,.5) {4};
\node at (1,.75) {3};
\node at (1,.25) {1};
\node at (2,.75) {2};
\end{footnotesize}

\end{tikzpicture}.
\]
Furthermore, here are all admissible domino tableaux of shape $(4,2)$:
\[
\begin{tikzpicture}[scale=0.7]

\draw (0,0) -- +(0,1);
\draw (.5,0) -- +(0,1);
\draw (0,0) -- +(1,0);
\draw (0,1) -- +(2,0);
\draw (1,0) -- +(0,1);
\draw (2,.5) -- +(0,.5);
\draw (1,.5) -- +(1,0);

\begin{footnotesize}
\node at (.25,.5) {3};
\node at (.75,.5) {2};
\node at (1.5,.75) {1};
\end{footnotesize}

\end{tikzpicture}
\hspace{3em}
\begin{tikzpicture}[scale=0.7]

\draw (0,0) -- +(0,1);
\draw (0,0) -- +(1,0);
\draw (0,1) -- +(2,0);
\draw (0,.5) -- +(2,0);
\draw (1,0) -- +(0,1);
\draw (2,.5) -- +(0,.5);

\begin{footnotesize}
\node at (.5,.75) {3};
\node at (.5,.25) {2};
\node at (1.5,.75) {1};
\end{footnotesize}

\end{tikzpicture}
\hspace{3em}
\begin{tikzpicture}[scale=0.7]

\draw (0,0) -- +(0,1);
\draw (0,0) -- +(1,0);
\draw (0,1) -- +(2,0);
\draw (0,.5) -- +(2,0);
\draw (1,0) -- +(0,1);
\draw (2,.5) -- +(0,.5);

\begin{footnotesize}
\node at (.5,.75) {3};
\node at (.5,.25) {1};
\node at (1.5,.75) {2};
\end{footnotesize}

\end{tikzpicture}.
\]
\end{ex}




Let $x\in\mathcal N_{\mathfrak{g}_\epsilon}$ be a nilpotent endomorphism of $\mathbb C^n$ of Jordan type $\lambda$. We define a surjection $\mathcal S^x_{\beta_\epsilon}\colon \mathcal Fl^x_{\beta_\epsilon}\twoheadrightarrow ADT(\lambda)$ as follows: Given $F_\bullet\in\mathcal Fl^x_{\beta_\epsilon}$, we obtain a sequence $x^{(m)},\ldots,x^{(0)}$ of nilpotent endomorphisms, where $x^{(i)}\colon F_i^\perp/F_i\to F_i^\perp/F_i$ denotes the map induced by $x$ (the orthogonal complement is taken with respect to $\beta_\epsilon$). The Jordan types $J(x^{(m-1)}),\ldots,J(x^{(i)}),\ldots,J(x^{(0)})$ are admissible partitions, where successive Jordan types differ by precisely one domino. We label the new domino by comparing Jordan types of $x^{(i)}$ and $x^{(i+1)}$ with $i$. More details can be found in \cite{Spa82}, \cite{vL89} or \cite{Pie04}.

\begin{ex} \label{ex:Spaltenstein_map}
Let $x\in\mathfrak{sl}(\mathbb C^8)$ be a nilpotent endomorphism of Jordan type $(5,3)$ and let
\begin{equation} \label{eq:5_3_Jordan_basis}
\begin{tikzpicture}[baseline={(0,0)}]
\node (e1) at (0,0) {$e_1$};
\node (e2) at (1,0) {$e_2$};
\node (e3) at (2,0) {$e_3$};
\node (e4) at (3,0) {$e_4$};
\node (e5) at (4,0) {$e_5$};
\node (f1) at (5.5,0) {$f_1$};
\node (f2) at (6.5,0) {$f_2$};
\node (f3) at (7.5,0) {$f_3$};
\path[->,font=\scriptsize,>=angle 90,bend right]
(e2) edge (e1)
(e3) edge (e2)
(e4) edge (e3)
(e5) edge (e4)
(f2) edge (f1)
(f3) edge (f2);
\end{tikzpicture}
\end{equation}
be a Jordan basis (the arrows indicate the action of $x$, $e_1$ and $f_1$ are mapped to zero by $x$). Let $\beta$ be the nondegenerate symmetric bilinear form on $\mathbb C^8$ given by 
\[
\beta(e_i,f_j)=0, \hspace{1.6em} \beta(e_i,e_{i'})=(-1)^{i-1}\delta_{i+i',n-k+1}, \hspace{1.6em} \beta(f_j,f_{j'})=(-1)^j\delta_{j+j',k+1},
\]
where $i,i'\in\{1,\ldots,5\}$ and $j,j'\in\{1,2,3\}$. Then one easily checks that $x\in\mathcal N_\mathfrak{g}$. Consider the flag $F_\bullet\in\mathcal Fl^{x}_\beta$, where
\[
F_1=\spn(e_1)\,,\,\, F_2=\spn(e_1,f_1)\,,\,\, F_3=\spn(e_1,e_2,f_1)\,,\,\, F_4=\spn(e_1,e_2,f_1,{\bf i}e_3+f_2).
\]
Note that $F_1^\perp$ is spanned by all vectors in (\ref{eq:5_3_Jordan_basis}) except $e_5$, $F_2^\perp$ is spanned by all vectors except $e_5,f_3$ and $F_3^\perp=\spn(e_1,e_2,e_3,f_1,f_2)$. Hence, one easily computes the Jordan types
\[
J(x^{(3)})=(1,1)\,,\,\,J(x^{(2)})=(3,1)\,,\,\,J(x^{(1)})=(3,3)\,,\,\,J(x^{(0)})=(5,3)
\]
and the tableau associated with $F_\bullet$ via $\mathcal S^x_\beta$ is constructed as follows:
\[
\begin{tikzpicture}[scale=0.7, baseline={(0,.25)}]

\draw (0,0) -- +(0,1);
\draw (.5,0) -- +(0,1);
\draw (0,0) -- +(.5,0);
\draw (0,1) -- +(.5,0);

\begin{footnotesize}
\node at (.25,.5) {4};
\end{footnotesize}

\end{tikzpicture}
\hspace{.5em}
\begin{xy}
	\xymatrix@=1em{
\ar@{~>}[rr] &&
}
\end{xy}
\hspace{.5em}
\begin{tikzpicture}[scale=0.7, baseline={(0,.25)}]

\draw (0,0) -- +(0,1);
\draw (.5,0) -- +(0,1);
\draw (0,0) -- +(.5,0);
\draw (0,1) -- +(1.5,0);
\draw (.5,.5) -- +(1,0);
\draw (1.5,.5) -- +(0,.5);

\begin{footnotesize}
\node at (.25,.5) {4};
\node at (1,.75) {3};
\end{footnotesize}

\end{tikzpicture}
\hspace{.5em}
\begin{xy}
	\xymatrix@=1em{
\ar@{~>}[rr] &&
}
\end{xy}
\hspace{.5em}
\begin{tikzpicture}[scale=0.7, baseline={(0,.25)}]

\draw (0,0) -- +(0,1);
\draw (.5,0) -- +(0,1);
\draw (0,0) -- +(1.5,0);
\draw (0,1) -- +(1.5,0);
\draw (.5,.5) -- +(1,0);
\draw (1.5,0) -- +(0,1);

\begin{footnotesize}
\node at (.25,.5) {4};
\node at (1,.75) {3};
\node at (1,.25) {2};
\end{footnotesize}

\end{tikzpicture}
\hspace{.5em}
\begin{xy}
	\xymatrix@=1em{
\ar@{~>}[rr] &&
}
\end{xy}
\hspace{.5em}
\begin{tikzpicture}[scale=0.7, baseline={(0,.25)}]

\draw (0,0) -- +(0,1);
\draw (.5,0) -- +(0,1);
\draw (0,0) -- +(1.5,0);
\draw (0,1) -- +(2.5,0);
\draw (.5,.5) -- +(2,0);
\draw (1.5,0) -- +(0,1);
\draw (2.5,.5) -- +(0,.5);

\begin{footnotesize}
\node at (.25,.5) {4};
\node at (1,.75) {3};
\node at (1,.25) {2};
\node at (2,.75) {1};
\end{footnotesize}

\end{tikzpicture}
\]
\end{ex}

\begin{rem} \label{rem:irreducible_and_connected_components}
The closures of the preimages of the elements in $ADT(\lambda)$ under the map $\mathcal S^x_{\beta_\epsilon}$ are not necessarily connected and hence cannot be irreducible (as in type $A$). Nonetheless, the irreducible components of $\mathcal Fl^x_{\beta_\epsilon}$ contained in the closure of $(\mathcal S^x_{\beta_\epsilon})^{-1}(T)$ are precisely its connected components \cite[Lemma 3.2.3]{vL89}.
\end{rem}

\begin{ex} \label{ex:flags_in_irred_comp}
Consider the Springer fiber $\mathcal Fl^{x}_\beta$ as in Example~\ref{ex:Spaltenstein_map}. Then one can check (e.g.\ by using the inductive construction in \cite[\S6.5]{ES12}) that the fiber of $\mathcal S^x_\beta$ over $T$, where $T$ denotes the rightmost admissible domino tableau of shape $(5,3)$ in Example~\ref{ex:admissible_domino_tableaux}, is the union of the following two disjoint sets of flags
\[
\big\{\spn(\mu e_1+f_1)\subseteq\spn(e_1,f_1)\subseteq\spn(e_1,e_2,f_1)\subseteq\spn(e_1,e_2,f_1,{\bf i}e_3+f_2)\mid\mu\in\mathbb C\big\}
\]
\[
\big\{\spn(\mu e_1+f_1)\subseteq\spn(e_1,f_1)\subseteq\spn(e_1,e_2,f_1)\subseteq\spn(e_1,e_2,f_1,{\bf i}e_3-f_2)\mid\mu\in\mathbb C\big\}
\]
each of which is isomorphic to an affine space $\mathbb A^1$. By taking the closure we add the possibility of choosing $\spn(e_1)$ as a one-dimensional subspace of these flags. Thus, the closure of the preimage of $T$ under $\mathcal S^x_\beta$ is isomorphic to a disjoint union of two projective spaces $\mathbb P^1$ each of which corresponds to an irreducible component of $\mathcal Fl^{x}_\beta$.  
\end{ex}

By the above discussion the map $\mathcal S^x_{\beta_\epsilon}$ does not provide a parameterization of the irreducible components of $\mathcal Fl^x_{\beta_\epsilon}$. It is necessary to add more combinatorial data to the tableaux in order to count the connected components.  

\begin{defi} A {\it signed domino tableau} of shape $\lambda$ is an admissible domino tableau of shape $\lambda$ together with a choice of sign (an element of the set $\{+,-\}$) for each vertical domino in an odd column if $\lambda\in\mathcal P_1(n)$ (resp.\ $\lambda\in\mathcal P_{-1}(n)$). We write $ADT^\text{sgn}(\lambda)$ for the set of signed domino tableau of shape $\lambda$. We define $ADT_\textrm{odd}^\text{sgn}(\lambda)$ (resp.\ $ADT_\textrm{even}^\text{sgn}(\lambda)$) as the set of all signed domino tableaux with an odd (resp.\ even) number of minus signs.
\end{defi}

\begin{ex} \label{ex:signed_domino_tableaux}
The set $ADT^\text{sgn}((5,3))$ consists of the following eight elements:
\[
\begin{tikzpicture}[scale=0.7]

\draw (0,0) -- +(0,1);
\draw (.5,0) -- +(0,1);
\draw (0,0) -- +(1.5,0);
\draw (0,1) -- +(2.5,0);
\draw (1.5,0) -- +(0,1);
\draw (1,0) -- +(0,1);
\draw (2.5,.5) -- +(0,.5);
\draw (1.5,.5) -- +(1,0);

\begin{footnotesize}
\node at (.25,.6) {4};
\node at (.25,.25) {$+$};
\node at (.75,.6) {3};
\node at (1.25,.6) {2};
\node at (1.25,.25) {$+$};
\node at (2,.75) {1};
\end{footnotesize}

\end{tikzpicture}
\hspace{3em}
\begin{tikzpicture}[scale=0.7]

\draw (0,0) -- +(0,1);
\draw (.5,0) -- +(0,1);
\draw (0,0) -- +(1.5,0);
\draw (0,1) -- +(2.5,0);
\draw (1.5,0) -- +(0,1);
\draw (1,0) -- +(0,1);
\draw (2.5,.5) -- +(0,.5);
\draw (1.5,.5) -- +(1,0);

\begin{footnotesize}
\node at (.25,.6) {4};
\node at (.25,.25) {$+$};
\node at (.75,.6) {3};
\node at (1.25,.6) {2};
\node at (1.25,.25) {$-$};
\node at (2,.75) {1};
\end{footnotesize}

\end{tikzpicture}
\hspace{3em}
\begin{tikzpicture}[scale=0.7]

\draw (0,0) -- +(0,1);
\draw (.5,0) -- +(0,1);
\draw (0,0) -- +(1.5,0);
\draw (0,1) -- +(2.5,0);
\draw (1.5,0) -- +(0,1);
\draw (1,0) -- +(0,1);
\draw (2.5,.5) -- +(0,.5);
\draw (1.5,.5) -- +(1,0);

\begin{footnotesize}
\node at (.25,.6) {4};
\node at (.25,.25) {$-$};
\node at (.75,.6) {3};
\node at (1.25,.6) {2};
\node at (1.25,.25) {$+$};
\node at (2,.75) {1};
\end{footnotesize}

\end{tikzpicture}
\hspace{3em}
\begin{tikzpicture}[scale=0.7]

\draw (0,0) -- +(0,1);
\draw (.5,0) -- +(0,1);
\draw (0,0) -- +(1.5,0);
\draw (0,1) -- +(2.5,0);
\draw (1.5,0) -- +(0,1);
\draw (1,0) -- +(0,1);
\draw (2.5,.5) -- +(0,.5);
\draw (1.5,.5) -- +(1,0);

\begin{footnotesize}
\node at (.25,.6) {4};
\node at (.25,.25) {$-$};
\node at (.75,.6) {3};
\node at (1.25,.6) {2};
\node at (1.25,.25) {$-$};
\node at (2,.75) {1};
\end{footnotesize}

\end{tikzpicture}
\]
\[
\begin{tikzpicture}[scale=0.7]

\draw (0,0) -- +(0,1);
\draw (.5,0) -- +(0,1);
\draw (0,0) -- +(1.5,0);
\draw (0,1) -- +(2.5,0);
\draw (.5,.5) -- +(2,0);
\draw (1.5,0) -- +(0,1);
\draw (2.5,.5) -- +(0,.5);

\begin{footnotesize}
\node at (.25,.6) {4};
\node at (.25,.25) {$+$};
\node at (1,.75) {3};
\node at (1,.25) {2};
\node at (2,.75) {1};
\end{footnotesize}

\end{tikzpicture}
\hspace{3em}
\begin{tikzpicture}[scale=0.7]

\draw (0,0) -- +(0,1);
\draw (.5,0) -- +(0,1);
\draw (0,0) -- +(1.5,0);
\draw (0,1) -- +(2.5,0);
\draw (.5,.5) -- +(2,0);
\draw (1.5,0) -- +(0,1);
\draw (2.5,.5) -- +(0,.5);

\begin{footnotesize}
\node at (.25,.6) {4};
\node at (.25,.25) {$-$};
\node at (1,.75) {3};
\node at (1,.25) {2};
\node at (2,.75) {1};
\end{footnotesize}

\end{tikzpicture}
\hspace{3em}
\begin{tikzpicture}[scale=0.7]

\draw (0,0) -- +(0,1);
\draw (.5,0) -- +(0,1);
\draw (0,0) -- +(1.5,0);
\draw (0,1) -- +(2.5,0);
\draw (.5,.5) -- +(2,0);
\draw (1.5,0) -- +(0,1);
\draw (2.5,.5) -- +(0,.5);

\begin{footnotesize}
\node at (.25,.6) {4};
\node at (.25,.25) {$+$};
\node at (1,.75) {3};
\node at (1,.25) {1};
\node at (2,.75) {2};
\end{footnotesize}

\end{tikzpicture}
\hspace{3em}
\begin{tikzpicture}[scale=0.7]

\draw (0,0) -- +(0,1);
\draw (.5,0) -- +(0,1);
\draw (0,0) -- +(1.5,0);
\draw (0,1) -- +(2.5,0);
\draw (.5,.5) -- +(2,0);
\draw (1.5,0) -- +(0,1);
\draw (2.5,.5) -- +(0,.5);

\begin{footnotesize}
\node at (.25,.6) {4};
\node at (.25,.25) {$-$};
\node at (1,.75) {3};
\node at (1,.25) {1};
\node at (2,.75) {2};
\end{footnotesize}

\end{tikzpicture}
\]
The set $ADT^\text{sgn}((4,2))$ consists of the following four elements:
\[
\begin{tikzpicture}[scale=0.7]

\draw (0,0) -- +(0,1);
\draw (.5,0) -- +(0,1);
\draw (0,0) -- +(1,0);
\draw (0,1) -- +(2,0);
\draw (1,0) -- +(0,1);
\draw (2,.5) -- +(0,.5);
\draw (1,.5) -- +(1,0);

\begin{footnotesize}
\node at (.25,.6) {3};
\node at (.75,.6) {2};
\node at (.75,.25) {$+$};
\node at (1.5,.75) {1};
\end{footnotesize}

\end{tikzpicture}
\hspace{3em}
\begin{tikzpicture}[scale=0.7]

\draw (0,0) -- +(0,1);
\draw (.5,0) -- +(0,1);
\draw (0,0) -- +(1,0);
\draw (0,1) -- +(2,0);
\draw (1,0) -- +(0,1);
\draw (2,.5) -- +(0,.5);
\draw (1,.5) -- +(1,0);

\begin{footnotesize}
\node at (.25,.6) {3};
\node at (.75,.6) {2};
\node at (.75,.25) {$-$};
\node at (1.5,.75) {1};
\end{footnotesize}

\end{tikzpicture}
\hspace{3em}
\begin{tikzpicture}[scale=0.7]

\draw (0,0) -- +(0,1);
\draw (0,0) -- +(1,0);
\draw (0,1) -- +(2,0);
\draw (0,.5) -- +(2,0);
\draw (1,0) -- +(0,1);
\draw (2,.5) -- +(0,.5);

\begin{footnotesize}
\node at (.5,.75) {3};
\node at (.5,.25) {2};
\node at (1.5,.75) {1};
\end{footnotesize}

\end{tikzpicture}
\hspace{3em}
\begin{tikzpicture}[scale=0.7]

\draw (0,0) -- +(0,1);
\draw (0,0) -- +(1,0);
\draw (0,1) -- +(2,0);
\draw (0,.5) -- +(2,0);
\draw (1,0) -- +(0,1);
\draw (2,.5) -- +(0,.5);

\begin{footnotesize}
\node at (.5,.75) {3};
\node at (.5,.25) {1};
\node at (1.5,.75) {2};
\end{footnotesize}

\end{tikzpicture}
\]
\end{ex}

The following Proposition summarizes the discussion above and should be seen as the analog of Proposition~\ref{prop:Spaltenstein-Vargas} for two-row Springer fibers of types $D$ and $C$:

\begin{prop}[{\cite[Lemma 3.2.3 and 3.3.3]{vL89}}] \label{prop:vL_parametrization_of_components}
Let $\lambda=(\lambda_1,\lambda_2)\in\mathcal P_\epsilon(n)$ be a partition and $x\in\mathcal N_{\mathfrak{g}_\epsilon}$ a nilpotent endomorphism of Jordan type $\lambda$.

For every $T\in ADT(\lambda)$ the closure of the preimage of $T$ under $\mathcal S^x_{\beta_\epsilon}$ is a union of disjoint irreducible components of $\mathcal Fl^x_{\beta_\epsilon}$ indexed by all signed domino tableaux which equal $T$ after forgetting the signs. There exists a bijection between the set of all irreducible components of $\mathcal Fl^x_{\beta_\epsilon}$ and $ADT^\mathrm{sgn}(\lambda)$.
\end{prop}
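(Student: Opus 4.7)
Since this is stated as a known result from \cite{vL89}, my plan is to reconstruct van Leeuwen's inductive strategy. The overall idea is to analyse the fibers of $\mathcal S^x_{\beta_\epsilon}$ directly: show each fiber is smooth and locally closed, count its connected components, and then invoke Spaltenstein's equidimensionality theorem for Springer fibers to identify the closures of those connected components as the irreducible components of $\mathcal Fl^x_{\beta_\epsilon}$.

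I would proceed by induction on $m$, with the $m=0$ case being trivial. Fix $T\in ADT(\lambda)$ and consider the domino labelled $m$; this encodes the Jordan type $\mu = J(x^{(1)})\in\mathcal P_\epsilon(n-2)$ obtained by deleting that domino from $\lambda$. Associated to any one-dimensional isotropic subspace $F_1$ realising this transition is a nondegenerate bilinear form $\overline{\beta}$ on $F_1^\perp/F_1$ of the same type as $\beta_\epsilon$ together with an induced nilpotent endomorphism $\overline{x}$ of Jordan type $\mu$. Sending $F_\bullet\in(\mathcal S^x_{\beta_\epsilon})^{-1}(T)$ to the quotient flag $F_2/F_1\subsetneq\cdots\subsetneq F_m/F_1$ exhibits $(\mathcal S^x_{\beta_\epsilon})^{-1}(T)$ as a bundle over the space of admissible $F_1$'s, with fibers isomorphic to $(\mathcal S^{\overline{x}}_{\overline{\beta}})^{-1}(T')$, where $T'$ denotes $T$ with the top domino removed. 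The inductive hypothesis controls the fibers.

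The combinatorial crux is counting the connected components of the base. For a generic choice of domino, horizontal or vertical in an even column, the variety of admissible $F_1$ is connected, essentially a projective space or an affine bundle thereof, so by induction the component count is preserved. The exceptional case is a vertical domino in an odd column: here the constraints on $F_1$ force it into a disconnected union of two components, in direct analogy with the splitting of the type-$D$ flag variety into two components described in Remark~\ref{rem:companion_flag}. Induction then yields exactly $2^{v(T)}$ connected components above $T$, where $v(T)$ counts vertical dominoes of $T$ in odd columns, matching the cardinality of the fiber of the forgetful map $ADT^{\mathrm{sgn}}(\lambda)\twoheadrightarrow ADT(\lambda)$ above $T$.

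To finish, I would verify smoothness of $(\mathcal S^x_{\beta_\epsilon})^{-1}(T)$ as an iterated bundle of smooth pieces, so that its connected components are smooth irreducible varieties and their closures are irreducible subvarieties of $\mathcal Fl^x_{\beta_\epsilon}$. A direct dimension count from the inductive description, combined with Spaltenstein's equidimensionality result, identifies them as genuine irreducible components, and the partition $\mathcal Fl^x_{\beta_\epsilon}=\bigsqcup_T (\mathcal S^x_{\beta_\epsilon})^{-1}(T)$ ensures that every irreducible component is accounted for. The main obstacle I anticipate is the form-theoretic case analysis of $\overline{\beta}$ on $F_1^\perp/F_1$: pinpointing precisely when the variety of admissible $F_1$ disconnects, and checking that the two resulting components are canonically indexed by the two possible signs on a vertical domino in an odd column, so that the bijection $ADT^{\mathrm{sgn}}(\lambda)\cong\{\text{irreducible components}\}$ is both surjective and well-defined.
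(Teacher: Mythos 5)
The paper does not actually prove this proposition; it is quoted from van Leeuwen's thesis, and your inductive skeleton (peel off $F_1$, pass to the induced pair $(\overline{x},\overline{\beta})$ on $F_1^\perp/F_1$, fibre the stratum $(\mathcal S^x_{\beta_\epsilon})^{-1}(T)$ over the locus of admissible lines $F_1$, then conclude via smoothness, a dimension count and Spaltenstein's equidimensionality) is indeed the strategy of the cited source, so the outline is the right one. One cosmetic slip: with the paper's decreasing-entry convention the domino removed when passing from $\lambda=J(x^{(0)})$ to $J(x^{(1)})$ is the one labelled $1$, not $m$ (the domino labelled $m$ is the core $J(x^{(m-1)})$; cf.\ Example~\ref{ex:Spaltenstein_map}).

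The genuine gap is the case analysis you yourself single out as the crux, and as stated it is wrong in the symplectic case, which the proposition also covers. The dominoes responsible for disconnection of the $F_1$-locus (equivalently, the dominoes carrying signs) depend on $\epsilon$: vertical dominoes in \emph{odd} columns for $\epsilon=1$ (type $D$), but in \emph{even} columns for $\epsilon=-1$ (type $C$). This is visible in the paper itself: in Example~\ref{ex:signed_domino_tableaux} the type $C$ shape $(4,2)$ has its signs on the column-two vertical domino while the column-one vertical domino carries none, and Lemma~\ref{lem:bijection_tableaux_C_D} only makes sense with this convention, since deleting the leftmost vertical domino shifts column parities. Concretely, for $\epsilon=-1$ and $\lambda=(1,1)$ every line in $\mathbb C^2$ is isotropic, so $\mathcal Fl^x_{\beta_{-1}}\cong\mathbb P^1$ is irreducible (one component, one signed tableau), whereas your rule predicts a disconnected $F_1$-locus and two components; for symplectic $\lambda=(2,2)$ a direct computation gives three components (the two-vertical-domino tableau contributes two, the splitting being governed by the even-column domino, and the all-horizontal tableau one), again incompatible with counting $2^{v(T)}$ with $v(T)$ the number of odd-column vertical dominoes. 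So the form-theoretic analysis of $\overline{\beta}$ on $F_1^\perp/F_1$ must be carried out separately for the orthogonal and symplectic cases and yields a type-dependent parity rule; until that is done, both the component count and the asserted bijection with $ADT^{\mathrm{sgn}}(\lambda)$ remain unproven, and the remaining steps of your sketch (smoothness of the strata, constancy of the dimension over all $T$) are plausible but likewise only asserted.
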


\begin{rem}
For general Jordan types the irreducible components are parameterized by equivalence classes of signed domino tableaux. However, in the case of two-row partitions (which is the case we are primarily interested in) this equivalence relation is trivial. We refer to \cite[\S3.3]{vL89} or \cite[\S3]{Pie04} for details and the more general statement for arbitrary Jordan types.
\end{rem}

\begin{ex}
The two rightmost signed domino tableaux of shape $(5,3)$ in the second row of Example~\ref{ex:signed_domino_tableaux} index the two disjoint irreducible components in Example~\ref{ex:flags_in_irred_comp}. The sign on the vertical domino corresponds to the two choices $\spn(e_1,e_2,f_1,{\bf i}e_3+f_2)$ respectively $\spn(e_1,e_2,f_1,{\bf i}e_3-f_2)$ as the four-dimensional subspace (see also Remark~\ref{rem:companion_flag}).  
\end{ex}

\section{Topological Springer fibers} \label{sec:Topological_Springer_fibers}

In this section we fix a partition $(n-k,k)$ of $n=2m$, $1\leq k\leq m$, labelling a nilpotent orbit of the orthogonal group, i.e.\ either $k=m$ or $k$ is odd. 

\subsection{Definition of topological Springer fibers}

We fix a rectangle in the plane with $m$ vertices evenly spread along the upper horizontal edge of the rectangle. The vertices are labelled by the consecutive integers $1,\ldots,m$ in increasing order from left to right. 

\begin{defi}
A {\it cup diagram} is a non-intersecting diagram inside the rectangle obtained by attaching lower semicircles called {\it cups} and vertical line segments called {\it rays} to the vertices. In doing so we require that every vertex is connected with precisely one endpoint of a cup or ray. Moreover, a ray always connects a vertex with a point on the lower horizontal edge of the rectangle. Additionally, any cup or ray for which there exists a path inside the rectangle connecting this cup or ray to the right edge of the rectangle without intersecting any other part of the diagram is allowed to be decorated with a single dot. 

If the cups and rays of two given cup diagrams are incident with exactly the same vertices (regardless of the precise shape of the cups) and the distribution of dots on corresponding cups and rays coincides in both diagrams we consider them as equal.

We write $\mathbb B^{n-k,k}$ to denote the set of all cup diagrams on $m$ vertices with $\lfloor\frac{k}{2}\rfloor$ cups. This set decomposes as a disjoint union $\mathbb B^{n-k,k} = \mathbb B^{n-k,k}_{\mathrm{even}} \sqcup \mathbb B^{n-k,k}_{\mathrm{odd}}$, where $\mathbb B^{n-k,k}_{\mathrm{even}}$ (resp. $\mathbb B^{n-k,k}_{\mathrm{odd}}$) consists of all cup diagrams with an even (resp.\ odd) number of dots.
\end{defi} 

We usually neither draw the rectangle around the diagrams nor display the vertex labels.

\begin{ex} \label{ex:cup_diagrams}
The set $\mathbb B^{5,3}$ consists of the cup diagrams
\[
\ba=\begin{tikzpicture}[baseline={(0,-.4)}, scale=.8]
\begin{scope}[xshift=3cm]
\draw[thick] (0,0) .. controls +(0,-.5) and +(0,-.5) .. +(.5,0);

\draw[thick] (1,0) -- +(0,-.8);
\draw[thick] (1.5,0) -- +(0,-.8);
\end{scope}
\end{tikzpicture}
\hspace{3.5em}
\bb=\begin{tikzpicture}[baseline={(0,-.4)}, scale=.8]
\begin{scope}[xshift=3cm]
\draw[thick] (.5,0) .. controls +(0,-.5) and +(0,-.5) .. +(.5,0);

\draw[thick] (0,0) -- +(0,-.8);
\draw[thick] (1.5,0) -- +(0,-.8);
\end{scope}
\end{tikzpicture}
\hspace{3.5em}
{\bf c}=\begin{tikzpicture}[baseline={(0,-.4)}, scale=.8]
\begin{scope}[xshift=3cm]
\draw[thick] (1,0) .. controls +(0,-.5) and +(0,-.5) .. +(.5,0);

\draw[thick] (0,0) -- +(0,-.8);
\draw[thick] (.5,0) -- +(0,-.8);
\end{scope}
\end{tikzpicture}
\hspace{3.5em}
{\bf d}=\begin{tikzpicture}[baseline={(0,-.4)}, scale=.8]
\begin{scope}[xshift=3cm]
\draw[thick] (1,0) .. controls +(0,-.5) and +(0,-.5) .. +(.5,0);

\fill[thick] (1.25,-.365) circle(3pt);

\draw[thick] (0,0) -- +(0,-.8);
\draw[thick] (.5,0) -- +(0,-.8);
\fill[thick] (.5,-.4) circle(3pt);
\end{scope}
\end{tikzpicture}
\]
\[
{\bf e}=\begin{tikzpicture}[baseline={(0,-.4)}, scale=.8]
\begin{scope}[xshift=3cm]
\draw[thick] (0,0) .. controls +(0,-.5) and +(0,-.5) .. +(.5,0);

\draw[thick] (1,0) -- +(0,-.8);
\draw[thick] (1.5,0) -- +(0,-.8);
\fill[thick] (1.5,-.4) circle(3pt);
\end{scope}
\end{tikzpicture}
\hspace{3.5em}
{\bf f}=\begin{tikzpicture}[baseline={(0,-.4)}, scale=.8]
\begin{scope}[xshift=3cm]
\draw[thick] (.5,0) .. controls +(0,-.5) and +(0,-.5) .. +(.5,0);

\draw[thick] (0,0) -- +(0,-.8);
\draw[thick] (1.5,0) -- +(0,-.8);
\fill[thick] (1.5,-.4) circle(3pt);
\end{scope}
\end{tikzpicture}
\hspace{3.5em}
{\bf g}=\begin{tikzpicture}[baseline={(0,-.4)}, scale=.8]
\begin{scope}[xshift=3cm]
\draw[thick] (1,0) .. controls +(0,-.5) and +(0,-.5) .. +(.5,0);

\draw[thick] (0,0) -- +(0,-.8);
\draw[thick] (.5,0) -- +(0,-.8);
\fill[thick] (.5,-.4) circle(3pt);
\end{scope}
\end{tikzpicture}
\hspace{3.5em}
{\bf h}=\begin{tikzpicture}[baseline={(0,-.4)}, scale=.8]
\begin{scope}[xshift=3cm]
\draw[thick] (1,0) .. controls +(0,-.5) and +(0,-.5) .. +(.5,0);

\fill[thick] (1.25,-.365) circle(3pt);

\draw[thick] (0,0) -- +(0,-.8);
\draw[thick] (.5,0) -- +(0,-.8);
\end{scope}
\end{tikzpicture}
\]
where the diagrams in the first (resp.\ second) row are the ones in $\mathbb B^{n-k,k}_{\text{even}}$ (resp.\ $\mathbb B^{n-k,k}_{\text{odd}}$).
\end{ex}



Let $\mathbb S^2 \subseteq \mathbb R^3$ be the two-dimensional standard unit sphere on which we fix the points $p=(0,0,1)$ and $q=(1,0,0)$.

Given a cup diagram $\ba\in\mathbb B^{n-k,k}$, define $S_\ba\subseteq\left(\mathbb S^2\right)^m$ as the submanifold consisting of all $(x_1,\ldots,x_m)\in\left(\mathbb S^2\right)^m$ which satisfy the relations $x_i=-x_j$ (resp.\ $x_i=x_j$) if the vertices $i$ and $j$ are connected by an undotted cup (resp.\ dotted cup). Moreover, we impose the relations $x_i=p$ if the vertex $i$ is connected to a dotted ray and $x_i=-p$ (resp.\ $x_i=q$) if $i$ is connected to an undotted ray which is the rightmost ray in $\ba$ (resp.\ not the rightmost ray). Note that $S_\ba$ is homeomorphic to $(\mathbb S^2)^{\lfloor\frac{k}{2}\rfloor}$, i.e.\ each cup of $\ba$ contributes a sphere.

\begin{defi}
The $(n-k,k)$ {\it topological Springer fiber} $\mathcal S^{n-k,k}_D$ of type $D$ is defined as the union 
\[
\mathcal S^{n-k,k}_D := \bigcup_{\ba \in \mathbb B^{n-k,k}}S_\ba \subseteq\left(\mathbb S^2\right)^m.
\]
\end{defi}

\begin{ex}
In the following we discuss in detail the topological Springer fiber $\mathcal S^{5,3}_D$. The submanifolds of $\left(\mathbb S^2\right)^4$ associated with the cup diagrams in $\mathbb B^{5,3}$ (cf.\ Example~\ref{ex:cup_diagrams}) are the following:

\begin{minipage}{.45\textwidth}
\begin{itemize}
\item $S_\ba=\{(x,-x,p,-q)\mid x\in\mathbb S^2\}$
\item $S_{\bf c}=\{(p,-q,x,-x)\mid x\in\mathbb S^2\}$
\end{itemize}
\end{minipage}
\begin{minipage}{.45\textwidth}
\begin{itemize}
\item $S_\bb=\{(p,x,-x,-q)\mid x\in\mathbb S^2\}$
\item $S_{\bf d}=\{(p,q,x,x)\mid x\in\mathbb S^2\}$
\end{itemize}
\end{minipage}

\begin{minipage}{.45\textwidth}
\begin{itemize}
\item $S_{\bf e}=\{(x,-x,p,q)\mid x\in\mathbb S^2\}$
\item $S_{\bf g}=\{(p,q,x,-x)\mid x\in\mathbb S^2\}$
\end{itemize}
\end{minipage}
\begin{minipage}{.45\textwidth}
\begin{itemize}
\item $S_{\bf f}=\{(p,x,-x,q)\mid x\in\mathbb S^2\}$
\item $S_{\bf h}=\{(p,-q,x,x)\mid x\in\mathbb S^2\}$
\end{itemize}
\end{minipage}

\noindent Each of these manifolds is homeomorphic to a two-sphere. Their pairwise intersection is either a point or empty, e.g.\ we have $S_\ba\cap S_\bb=\{(p,-p,p,-q)\}$ and $S_\ba\cap S_{\bf c}=\emptyset$ (cf.\ also Proposition~\ref{prop:top_intersections}). The $(5,3)$ topological Springer fiber of type $D$ is a disjoint union of two Kleinian singularities of type $D_4$, \cite{Slo83}.
\[
\mathcal S^{5,3}_D\hspace{.4em}\cong\hspace{.4em}
\begin{tikzpicture}[baseline={(0,-.1)}, scale=.6]
\draw[thick] (-.445,.78) circle (1);
\draw[thick] (0,0) circle (1.1);
\draw[thick] (-1.54,-.6) circle (1.15);
\draw[thick] (2,-.3) circle (1.1);
\draw[densely dotted,thick] (0,0) ellipse (1.1 and 0.4);
\draw[densely dotted,thick] (2,-.3) ellipse (1.1 and 0.4);
\draw[densely dotted,thick] (-.445,.78) ellipse (1 and 0.4);
\draw[densely dotted,thick] (-1.54,-.6) ellipse (1.15 and 0.4);

\draw[thick] (7.445,.78) circle (1);
\draw[thick] (7,0) circle (1.1);
\draw[thick] (8.54,-.6) circle (1.15);
\draw[thick] (5,-.3) circle (1.1);
\draw[densely dotted,thick] (7,0) ellipse (1.1 and 0.4);
\draw[densely dotted,thick] (5,-.3) ellipse (1.1 and 0.4);
\draw[densely dotted,thick] (7.445,.78) ellipse (1 and 0.4);
\draw[densely dotted,thick] (8.54,-.6) ellipse (1.15 and 0.4);

\draw (.65,.65) -- (1.15,1.35);
\draw (2.5,.4) -- (2.9,1.05);
\draw (-2.35,-.1) -- (-3.1,.3);
\draw (-1.2,1.25) -- (-1.85,1.6);
\node at (1.25,1.65) {$S_b$};
\node at (3,1.35) {$S_d$};
\node at (-2.25,1.8) {$S_a$};
\node at (-3.5,.5) {$S_c$};

\draw (6.35,.65) -- (5.85,1.35);
\draw (4.5,.4) -- (4.1,1.05);
\draw (9.35,-.1) -- (10.1,.3);
\draw (8.2,1.25) -- (8.85,1.6);
\node at (5.9,1.65) {$S_f$};
\node at (3.9,1.35) {$S_e$};
\node at (9.25,1.8) {$S_g$};
\node at (10.5,.5) {$S_h$};
\end{tikzpicture}
\] 
\end{ex}

\begin{rem} \label{rem:relation_to_ES_model}
A topological model for the Springer fibers of type $D$ corresponding to the partition $(k,k)$ using a slightly different sign convention was introduced in \cite[\S4.1]{ES12}. In order to see that this model is in fact homeomorphic to our topological Springer fiber one considers the involutory diffeomorphism 
\[
I_k\colon (\mathbb S^2)^k \to (\mathbb S^2)^k\,,\,\,(x_1,\ldots,x_k)\mapsto (-x_1,x_2,-x_3,\ldots,(-1)^kx_k).
\] 
Note that if two vertices $i$ and $j$ of a given cup diagram $\ba \in\mathbb B^{k,k}$ are connected by a cup (dotted or undotted) then either $i$ is odd and $j$ is even, or $i$ is even and $j$ is odd. Moreover, the ray in $\ba$ (which exists if and only if $k$ is odd) must be attached to an odd vertex. Thus, the image of the set $S_\ba$ under $I_k$, denoted by $S'_\ba$, consists of all elements $(x_1,\ldots,x_k)\in(\mathbb S^2)^k$ which satisfy the relations $x_i=x_j$ (resp.\ $x_i=-x_j$) if the vertices $i$ and $j$ are connected by an undotted cup (resp.\ dotted cup), $x_i=-p$ if the vertex $i$ is connected to a dotted ray and $x_i=p$ if $i$ is connected to an undotted ray. In particular, we have
\[
I_k\left(\mathcal S^{k,k}_D\right)=\bigcup_{\ba\in\mathbb B^{n-k,k}} I_k\left(S_\ba\right)=\bigcup_{\ba\in\mathbb B^{n-k,k}} S'_\ba
\]
This is precisely the definition of the topological Springer fiber as in \cite[\S4.1]{ES12} (after reversing the order of the coordinates).
\end{rem}

\subsection{Intersections of components}

In the following we provide a combinatorial description of the topology of the pairwise intersections of the submanifolds $S_\ba\subseteq\left(\mathbb S^2\right)^m$, $\ba\in\mathbb B^{n-k,k}$, using circle diagrams (see Proposition~\ref{prop:top_intersections} below). In combination with the homeomorphism $\mathcal S^{n-k,k}_D\cong\mathcal Fl^{n-k,k}_D$ (cf.\ Theorem~\ref{thm:main_result_1}) this yields a combinatorial description of the topology of the pairwise intersections of the irreducible components of $\mathcal Fl^{n-k,k}_D$.   

\begin{defi}
Let $\ba,\bb\in\mathbb B^{n-k,k}$ be cup diagrams. The {\it circle diagram} $\overline{\ba}\bb$ is defined as the diagram obtained by reflecting the diagram $\ba$ in the horizontal middle line of the rectangle and then sticking the resulting diagram, denoted by $\overline{\ba}$, on top of the cup diagram $\bb$, i.e.\ we glue the two diagrams along the horizontal edges of the rectangles containing the vertices (thereby identifying the vertices of $\overline{\ba}$ and $\bb$ pairwise from left to right). In general the diagram $\overline{\ba}\bb$ consists of several connected components each of which is either closed (i.e.\ it has no endpoints) or a line segment. A line segment which contains a ray of $\ba$ and a ray of $\bb$ is called a {\it propagating line}.
\end{defi}

\begin{ex}
Here is an example illustrating the gluing of two cup diagrams in order to obtain a circle diagram:
\[
\ba=
\begin{tikzpicture}[scale=.8,baseline={(0,-.5)}]
\draw[thick] (0,0) .. controls +(0,-.5) and +(0,-.5) .. +(.5,0);
\draw[thick] (1.5,0) .. controls +(0,-.5) and +(0,-.5) .. +(.5,0);
\draw[thick] (2.5,0) .. controls +(0,-.5) and +(0,-.5) .. +(.5,0);

\fill[thick] (2.75,-.365) circle(3pt);

\draw[thick] (1,0) -- +(0,-1);
\fill[thick] (1,-.5) circle(3pt);
\end{tikzpicture}
\hspace{3em}
\bb=
\begin{tikzpicture}[scale=.8,baseline={(0,-.5)}]
\draw[thick] (.5,0) .. controls +(0,-.5) and +(0,-.5) .. +(.5,0);
\draw[thick] (1.5,0) .. controls +(0,-1) and +(0,-1) .. +(1.5,0);
\draw[thick] (2,0) .. controls +(0,-.5) and +(0,-.5) .. +(.5,0);

\fill[thick] (2.25,-.74) circle(3pt);
\fill[thick] (.75,-.365) circle(3pt);

\draw[thick] (0,0) -- +(0,-1);
\end{tikzpicture}
\hspace{1.5em}
\begin{xy}
	\xymatrix@=1em{
\ar@{~>}[rrr]^{\textrm{reflect }\ba}_{\textrm{and glue}}  &&&
}
\end{xy}
\hspace{1.5em}
\overline{\ba}\bb=
\begin{tikzpicture}[scale=.8,baseline={(0,0)}]
\draw[dotted] (-.3,0) -- (3.3,0);

\draw[thick] (0,0) .. controls +(0,.5) and +(0,.5) .. +(.5,0);
\draw[thick] (1.5,0) .. controls +(0,.5) and +(0,.5) .. +(.5,0);
\draw[thick] (2.5,0) .. controls +(0,.5) and +(0,.5) .. +(.5,0);
\draw[thick] (.5,0) .. controls +(0,-.5) and +(0,-.5) .. +(.5,0);
\draw[thick] (1.5,0) .. controls +(0,-1) and +(0,-1) .. +(1.5,0);
\draw[thick] (2,0) .. controls +(0,-.5) and +(0,-.5) .. +(.5,0);

\fill[thick] (2.75,.365) circle(3pt);
\fill[thick] (2.25,-.74) circle(3pt);
\fill[thick] (.75,-.365) circle(3pt);

\draw[thick] (1,0) -- +(0,1);
\fill[thick] (1,.5) circle(3pt);
\draw[thick] (0,0) -- +(0,-1);
\end{tikzpicture}
\]
Hence, $\overline{\ba}\bb$ consists of one closed connected component and one line segment which is propagating.
\end{ex}

\begin{prop} \label{prop:top_intersections}
Let $\ba,\bb\in \mathbb B^{n-k,k}$ be cup diagrams. We have $S_\ba\cap S_\bb\neq\emptyset$ if and only if the following conditions hold:
\begin{enumerate}[{(\text{I}}1{)}]
\item Every connected component of $\overline{\ba}\bb$ contains an even number of dots. 
\item Every line segment in $\overline{\ba}\bb$ is propagating.
\end{enumerate}
Furthermore, if $S_\ba\cap S_\bb\neq\emptyset$, then there exists a homeomorphism $S_\ba\cap S_\bb\cong\left(\mathbb S^2\right)^\mathrm{circ}$, where $\mathrm{circ}$ denotes the number of closed connected components of $\overline{\ba}\bb$.

\end{prop}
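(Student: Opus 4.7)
The plan is to analyze $S_\ba\cap S_\bb$ by decomposing along the connected components of $\overline{\ba}\bb$. A point $(x_1,\ldots,x_m)\in S_\ba\cap S_\bb$ must satisfy all equations imposed by $\ba$ and all those imposed by $\bb$, and every such equation couples only coordinates whose vertices belong to a common component of $\overline{\ba}\bb$. Hence the system decouples, the intersection factorizes as a product indexed by components of $\overline{\ba}\bb$, and the task reduces to analyzing a single closed circle or a single line segment at a time.

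For a closed circle $C$ I would fix a base vertex $v\in C$ and propagate $x_v$ around the cycle using the cup relations (undotted cups contributing a factor $-1$, dotted cups a factor $+1$). Returning to $v$ yields $x_v=(-1)^{u(C)}x_v$, where $u(C)$ is the number of undotted cups in $C$, so consistency requires $u(C)$ to be even. Since $C$ alternates arcs from $\overline{\ba}$ and from $\bb$ it contains an even total number of cups, hence $u(C)$ even is equivalent to the number of dotted cups (equivalently, the number of dots) in $C$ being even, which is exactly (I1) for~$C$. When consistent, $x_v$ may be chosen freely in $\mathbb S^2$, so each closed circle contributes one $\mathbb S^2$-factor.

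For a line segment $L$ with ray endpoints at $v_1,v_2$, propagation gives $x_{v_2}=(-1)^{u(L)}x_{v_1}$, while the rays fix these endpoints to values in $\{p,-p,q\}$, so consistency requires both axis-matching and sign-matching. If $L$ is propagating, its cup count is even (by the same arc-alternation count), and a planar argument tracing the boundary of the rightmost region of $\overline{\ba}\bb$ shows that the rays whose imposed value lies on the pole axis $\pm p$ (the rightmost undotted ray or the dotted ray of $\ba$, and likewise for $\bb$) always end up in the same segment; this forces the axes to match in every propagating segment. A short case analysis over the allowed pairs of ray types then translates the sign condition into the statement that the total number of dots on $L$ (dotted cups together with dotted rays) is even, i.e.\ (I1) for~$L$; since all endpoints are fixed, a propagating segment contributes no free parameter to the intersection.

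The remaining and technically most delicate step is to rule out non-propagating line segments. If $L$ is non-propagating, both of its rays come from the same diagram, say $\ba$, and the arc-alternation count now shows that $L$ has an odd number of cups. Either one of the two rays is the rightmost ray of $\ba$, in which case the imposed endpoint values lie on different axes ($\pm p$ versus $q$) and $L$ admits no solution; or neither is, in which case (since only the rightmost ray of $\ba$ can be dotted) both rays are non-rightmost undotted and impose the value $q$, so the sign condition $q=(-1)^{u(L)}q$ forces $u(L)$ even, whence $d(L)$ is odd and the total number of dots on $L$ is odd, violating (I1) for~$L$. Thus a non-propagating segment is always incompatible with the conjunction of (I1) and nonemptyness of the intersection, forcing (I2) as a necessary condition. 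Conversely, once (I1) and (I2) both hold every component is consistent, and accumulating one $\mathbb S^2$-factor per closed circle yields the homeomorphism $S_\ba\cap S_\bb\cong(\mathbb S^2)^{\mathrm{circ}}$.
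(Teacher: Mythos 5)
Your overall strategy---decomposing along the components of $\overline{\ba}\bb$, propagating signs along cups, one $\mathbb S^2$ per closed circle---is the same as the paper's, and your analysis of closed circles and of propagating segments (where axis-matching follows from nonemptiness anyway) is fine. The genuine gap is in how you rule out non-propagating segments whose two rays are both non-rightmost. Such a segment is \emph{not} pointwise inconsistent: both endpoints are forced to $q$, and if the number of undotted cups on it is even, the local system does have a solution. Your dismissal of this case reads ``the total number of dots on $L$ is odd, violating (I1)'', but in the forward direction (I1) is exactly what you are trying to deduce from $S_\ba\cap S_\bb\neq\emptyset$; the argument is circular, and what you have actually shown is only ``nonempty $\Rightarrow$ (I2), or else (I1) fails'', which is not the claimed implication. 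To close the gap you need the extra geometric input the paper uses: dots can only sit on cups or rays with an unobstructed path to the right edge, hence to the right of the rightmost ray of the respective diagram; given a point of the intersection, the segment through the rightmost ray of $\ba$ is forced (its values lie in $\{\pm p\}$) to end at the rightmost ray of $\bb$, and since this propagating curve separates the diagram, a segment containing only non-rightmost rays carries no dots at all. Then $q=(-1)^{r-1}q$ with $r-1$ odd is an outright contradiction, and only this actually excludes such segments.

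Relatedly, your unconditional planarity claim that the rays imposing values on the pole axis ``always end up in the same segment'' is false as stated: take $\ba$ with a cup joining $1$ and $2$ and rays at $3,4$, and $\bb$ with rays at $1,2$ and a cup joining $3$ and $4$ (both in $\mathbb B^{5,3}$); the rightmost rays of $\ba$ and $\bb$ then lie in different (non-propagating) segments. The statement only becomes true after assuming nonemptiness (as in the forward direction) or (I2) (as in the backward direction). In the backward construction your one-line ``every component is consistent'' should therefore be expanded: by crossinglessness under (I2), a propagating segment starting at a non-rightmost ray also ends at one, and with (I1) the propagated value $q$ indeed matches; similarly rightmost matches rightmost. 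With these points supplied, the rest of your argument (no free parameter from segments, one sphere per circle, hence $S_\ba\cap S_\bb\cong(\mathbb S^2)^{\mathrm{circ}}$) agrees with the paper.
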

\begin{proof}
Assume that $S_\ba\cap S_\bb\neq\emptyset$ and let $(x_1,\ldots,x_m)\in S_\ba\cap S_\bb$. Consider a connected component of $\overline{\ba}\bb$ containing the vertices $i_1,\ldots,i_r$ which are ordered such that $i_j$ and $i_{j+1}$ are connected by a cup for all $j\in\{1,\ldots,r-1\}$. If the component is closed (resp.\ a line segment) the total number of cups on the component equals $r$ (resp.\ $r-1$). We denote by $c$ (resp.\ $d$) the total number of undotted (resp.\ dotted) cups of this component.   

\begin{enumerate}[1{)}]
\item If the component is closed we have equalities $x_{i_j}=\pm x_{i_{j+1}}$ for all $j\in\{1,\ldots,r-1\}$ and $x_{i_r}=\pm x_{i_1}$, where the signs depends on whether the cup connecting the respective vertices is dotted or undotted. By successively inserting these equations into each other we obtain $x_{i_1}=(-1)^cx_{i_1}=(-1)^{r-d}x_{i_1}$. In order for this equation to hold, $r-d$ must be even. Since $r$ is even for closed components we deduce that $d$ is even which proves property (I1).
\item If the component is a line segment and $i_1$ is connected to the rightmost ray of either $\ba$ or $\bb$ we have $x_{i_1}\in\{\pm p\}$ and thus $x_{i_r}\in\{\pm p\}$ because $x_{i_1}=(-1)^{(r-1)-d}x_{i_r}$. This shows that $i_r$ is also connected to a rightmost ray. In particular, this line segment is propagating which proves (I2). 

Since $r-1$ is even for a propagating line, $x_{i_1}=(-1)^{(r-1)-d}x_{i_r}$ reduces to $x_{i_1}=(-1)^dx_{i_r}$. If $d$ is even both rays are either dotted or undotted and if $d$ is odd precisely one of the two rays is dotted (otherwise this equation would not be true). In any case, the total number of dots on the line segment is even which shows (I1). 
\item If the component is a line segment and $i_1$ is connected to a ray which is not a rightmost ray in $\ba$ nor $\bb$ we have $x_{i_1}=q$ and thus also $x_{i_r}=q$ because $x_{i_1}=(-1)^{(r-1)-d}x_{i_r}$ as in the previous case. This implies $(-1)^{(r-1)-d}=1$. Moreover, $d=0$ because we already know that the rightmost ray is connected to the rightmost ray. Thus, $r-1$ is even and hence the line is propagating.
\end{enumerate}

In order to prove the implication ``$\Leftarrow$'' we assume that $\ba,\bb\in\mathbb B^{n-k,k}$ are such that (I1) and (I2) are true. Any element $(x_1,\ldots,x_m)\in S_\ba\cap S_\bb$ can be constructed according to the following rules: 
\begin{enumerate}[1{)}] 
\item Given a closed connected component of $\overline{\ba}\bb$ containing the vertices $i_1,\ldots,i_r$ (ordered such that successive vertices are connected by a cup) we fix an element $x_{i_1}\in\mathbb S^2$ and then define $x_{i_2},\ldots,x_{i_r}$ by the relations imposed from $\ba$ and $\bb$. More precisely, under the assumption that $x_{i_1},\ldots,x_{i_j}$ are already constructed we define $x_{i_{j+1}}=\pm x_{i_j}$, $j\in\{1,\ldots,r-1\}$, and proceed inductively.

It remains to check that $x_{i_1}$ and $x_{i_r}$ satisfy the relation imposed by the cup connecting $i_1$ and $i_r$ (by construction $x_{i_1},\ldots,x_{i_r}$ automatically satisfy all the other relations imposed by $\overline{\ba}\bb$). We have $x_{i_1}=(-1)^{(r-1)-d'}x_{i_r}=-(-1)^{d'}x_{i_r}$, where $d'$ denotes the number of dotted cups on the component excluding the cup connecting $i_1$ and $i_r$. If $d'$ is even (resp.\ odd) this cup must be undotted (resp.\ dotted) because of (I1). Hence, the equation gives the correct relation.  
\item For every line segment of $\overline{\ba}\bb$ containing the vertices $i_1,\ldots,i_r$ (again, successive vertices are assumed to be connected by a cup) we define $x_{i_1}=q$ if $i_1$ is not connected to a rightmost ray and $x_{i_1}=-p$ (resp.\ $x_{i_1}=p$) if $i_1$ is connected to an undotted (resp.\ dotted) rightmost ray. We then define coordinates $x_{i_2},\ldots,x_{i_r}$ by the relations coming from the cups in $\ba$ and $\bb$ (as in the case of a closed component). 

We need to check that $x_{i_r}$ satisfies the relation imposed by the ray connected to $i_r$. We have $x_{i_1}=(-1)^{(r-1)-d}x_{i_r}=(-1)^dx_{i_r}$, where $d$ is the number of dotted cups of the line segment ($r-1$ is even because the line segment is propagating by (I2)). 

If $x_{i_1}=-p$ (resp.\ $x_{i_1}=p$), i.e.\ $i_1$ is connected to a rightmost ray which is undotted (resp.\ dotted), then $i_r$ is connected to a rightmost ray as well (otherwise the diagram would not be crossingless) and we have to show that $x_{i_r}=-p$ (resp.\ $x_{i_1}=p$) if the ray connected to $i_r$ is undotted (resp.\ dotted). In case that $d$ is even and $i_1$ is dotted (resp.\ undotted), then $i_r$ is also dotted (resp.\ undotted) and in case that $d$ is even and $i_1$ is dotted (resp.\ undotted), then $i_r$ is undotted (resp.\ dotted). In any case, everything is compatible. 

If $x_{i_1}=q$, i.e.\ $i_1$ is connected to a ray which is not a rightmost ray, then $i_r$ is connected to a ray which is not a rightmost ray either and we have to check that $x_{i_r}=(-1)^dx_{i_1}$ equals $q$. Since there exists a propagating line in $\overline{\ba}\bb$ containing the two rightmost rays of $\ba$ and $\bb$ it follows that $d=0$.
\end{enumerate}
Observe that this construction does not only prove the implication ``$\Leftarrow$'' but it also shows that $S_\ba\cap S_\bb$ is homeomorphic to $\left(\mathbb S^2\right)^\mathrm{circ}$ if $S_\ba\cap S_\bb\neq\emptyset$. 
\end{proof}







\begin{rem} \label{rem:connected_components}
If $\ba \in \mathbb B^{n-k,k}_{\text{even}}$ and $\bb \in \mathbb B^{n-k,k}_{\text{odd}}$ there exists a connected component of $\overline{\ba}\bb$ with an odd number of dots. Thus, it follows directly from Proposition~\ref{prop:top_intersections} that $S_\ba\cap S_\bb=\emptyset$ because $\overline{\ba}\bb$ violates (I1). In particular, the topological Springer fiber $\mathcal S^{n-k,k}_D$ always decomposes as the disjoint union of the spaces $\mathcal S^{n-k,k}_{D,\text{even}} := \bigcup_{\ba \in \mathbb B^{n-k,k}_{\text{even}}}S_\ba$ and $\mathcal S^{n-k,k}_{D,\text{odd}} := \bigcup_{\ba \in \mathbb B^{n-k,k}_{\text{odd}}}S_\ba$.
\end{rem}

\subsection{Some combinatorial bijections} \label{subsec:combinatorial_bijections}

We finish this section by describing some combinatorial bijections which already foreshadow the existence of the homeomorphisms in Theorem~\ref{thm:main_result_1} and Theorem~\ref{thm:main_result_2} on a purely combinatorial level. 

Let $(n-k,k)$ be a two-row partition (not necessarily admissible of type $C$ or $D$) and let $T$ be a standard Young tableau of shape $(n-k,k)$. Let $\psi(T)$ be the unique undecorated cup diagram on $n$ vertices whose left endpoints of cups are precisely the $k$ entries in the lower row of $S$.

\begin{lem}[{\cite[Proposition 3]{SW12}}] 
The assignment $\psi$ defines a bijection
\begin{equation} \label{eq:bij_std_tableaux_undecorated_cup_diags}
\psi\colon SYT(n-k,k) \overset{1:1}{\longleftrightarrow} 
	\begin{Bmatrix}
		{\textrm{undecorated cup diagrams}}\\
		{\textrm{on }n\textrm{ vertices with }k\textrm{ cups}}
	\end{Bmatrix}
\end{equation}
\end{lem}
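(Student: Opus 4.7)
The plan is to identify both sides of the bijection with a common combinatorial set, namely the collection $\mathcal{L}$ of subsets $L \subseteq \{1, \ldots, n\}$ with $|L| = k$ satisfying the ballot condition
\[
|L \cap \{j, j+1, \ldots, n\}| \leq |\{j, j+1, \ldots, n\} \setminus L| \quad\text{for every } j \in \{1, \ldots, n\}. \tag{$\ast$}
\]

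First I would establish a bijection $SYT(n-k,k) \to \mathcal{L}$ by sending $T$ to the set $\{b_1, \ldots, b_k\}$ of its bottom-row entries. The row-decreasing condition is automatic once this set is specified, and the column-decreasing condition $t_j > b_j$ (where $b_1 > \cdots > b_k$ and $t_1 > \cdots > t_{n-k}$ denote the two rows) translates via an elementary count of the top-row entries that exceed $b_j$ into the inequality $b_j \leq n - 2j + 1$. Since the function $j \mapsto |L \cap \{j, \ldots, n\}|$ is non-increasing in $j$ and decreases only as $j$ passes an element of $L$, checking $(\ast)$ at $j = b_1, \ldots, b_k$ already implies $(\ast)$ at every $j$, and at $j = b_i$ the inequality reduces precisely to $b_i \leq n - 2i + 1$. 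Conversely, any $L \in \mathcal{L}$ determines a standard Young tableau by placing the elements of $L$ (resp.\ of its complement) into the bottom (resp.\ top) row in decreasing order.

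Next I would establish a bijection from the set of undecorated cup diagrams on $n$ vertices with $k$ cups to $\mathcal{L}$ by recording the set of left endpoints. Necessity of $(\ast)$ is immediate: every left endpoint in a suffix $\{j, \ldots, n\}$ must be matched with a non-$L$-vertex in the same suffix. Conversely, given $L \in \mathcal{L}$, I build the cup diagram by a greedy left-to-right scan with a stack: push every $L$-vertex, and for every non-$L$-vertex either pop the stack and form the corresponding cup (if non-empty) or record a ray (otherwise). The condition $(\ast)$ guarantees the stack empties out by the end, producing exactly $k$ cups, and the resulting diagram is manifestly non-crossing. Uniqueness is the crucial point: in the paper's conventions a ray from an interior vertex would necessarily intersect any cup enclosing it, and two cups cannot cross; these two geometric constraints together force each non-$L$-vertex to be matched with the most recently pushed unmatched $L$-vertex, so any non-crossing cup diagram with left-endpoint set $L$ must coincide with the greedy one.

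Composing these two bijections gives $\psi$. The main obstacle is the uniqueness half of the second step; everything else is a translation between well-known two-row ballot formulations (the column-decreasing condition for SYTs and the nesting condition for non-crossing cup diagrams). Once the geometric ``no rays inside cups'' constraint is recorded, the remainder of the argument is routine bookkeeping.
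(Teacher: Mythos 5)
The paper does not actually prove this lemma; it simply cites \cite[Proposition~3]{SW12}. Your argument is a self-contained alternative, and it is correct. The three-way identification (two-row SYTs with decreasing fillings, left-endpoint sets $L$ satisfying the suffix ballot condition $(\ast)$, and non-crossing cup-ray diagrams) is sound: the count showing that $t_j > b_j$ for all $j$ is equivalent to $b_j \leq n - 2j + 1$ for all $j$ checks out, necessity of $(\ast)$ for cup diagrams is immediate from injectivity of the left-to-right endpoint matching on any suffix, and the stack-based greedy both produces a valid diagram (the ballot condition guarantees the stack empties, yielding exactly $k$ cups, and a LIFO matching is automatically non-crossing with no ray trapped inside a cup) and is the unique one. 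What your approach buys over a bare citation is an explicit construction and inverse, which makes it clear \emph{why} the map $\psi$ described in the paper is well defined in the first place (i.e.\ that a non-crossing diagram with the prescribed left endpoints exists and is unique).

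Two spots are correct but compressed and would need a line or two more in a formal write-up. First, when you argue that checking $(\ast)$ at $j = b_1,\dots,b_k$ suffices, you invoke only the monotonicity of $j \mapsto |L \cap \{j,\dots,n\}|$; the argument actually needs both terms, e.g.\ that the difference $|L \cap \{j,\dots,n\}| - |\{j,\dots,n\}\setminus L|$ increases by $1$ as $j$ drops past an element of $L$ and decreases by $1$ otherwise, so its maximum over $j \in \{1,\dots,n+1\}$ is attained at some $j \in L$ or at $j=n+1$. Second, the uniqueness claim (``each non-$L$-vertex matches the most recently pushed unmatched $L$-vertex'') is true but stated as an assertion; the cleanest justification is an induction on the leftmost non-$L$-vertex $j$: if $j=1$ it must be a ray, and if $j>1$ then the ray-in-cup prohibition rules out a ray, so $j$ closes a cup, and non-crossing together with the fact that $1,\dots,j-1 \in L$ forces that cup to be $(j-1,j)$; deleting this pair and recursing reproduces the stack algorithm.
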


\begin{ex} 
Via (\ref{eq:bij_std_tableaux_undecorated_cup_diags}) the five standard Young tableaux in Example \ref{ex:std_yng_tab} correspond (in the same order) to the following undecorated cup diagrams on five vertices with two cups:
\[
\begin{tikzpicture}[scale=.7]
\begin{scope}[xshift=3cm]
\draw[thick] (0,0) .. controls +(0,-1) and +(0,-1) .. +(1.5,0);
\draw[thick] (.5,0) .. controls +(0,-.5) and +(0,-.5) .. +(.5,0);

\draw[thick] (2,0) -- +(0,-.8);
\end{scope}
\end{tikzpicture}
\hspace{3.5em}
\begin{tikzpicture}[scale=.7]
\begin{scope}[xshift=3cm]
\draw[thick] (0,0) .. controls +(0,-.5) and +(0,-.5) .. +(.5,0);
\draw[thick] (1,0) .. controls +(0,-.5) and +(0,-.5) .. +(.5,0);

\draw[thick] (2,0) -- +(0,-.8);
\end{scope}
\end{tikzpicture}
\hspace{3.5em}
\begin{tikzpicture}[scale=.7]
\begin{scope}[xshift=3cm]
\draw[thick] (0,0) .. controls +(0,-.5) and +(0,-.5) .. +(.5,0);
\draw[thick] (1.5,0) .. controls +(0,-.5) and +(0,-.5) .. +(.5,0);

\draw[thick] (1,0) -- +(0,-.8);
\end{scope}
\end{tikzpicture}
\hspace{3.5em}
\begin{tikzpicture}[scale=.7]
\begin{scope}[xshift=3cm]
\draw[thick] (.5,0) .. controls +(0,-.5) and +(0,-.5) .. +(.5,0);
\draw[thick] (1.5,0) .. controls +(0,-.5) and +(0,-.5) .. +(.5,0);

\draw[thick] (0,0) -- +(0,-.8);
\end{scope}
\end{tikzpicture}
\hspace{3.5em}
\begin{tikzpicture}[scale=.7]
\begin{scope}[xshift=3cm]
\draw[thick] (.5,0) .. controls +(0,-1) and +(0,-1) .. +(1.5,0);
\draw[thick] (1,0) .. controls +(0,-.5) and +(0,-.5) .. +(.5,0);

\draw[thick] (0,0) -- +(0,-.8);
\end{scope}
\end{tikzpicture}
\]
\end{ex}

In the following we will establish a bijection between signed domino tableaux and cup diagrams similar to the one between standard tableaux and undecorated cup diagrams (cf. \cite[Section 5]{ES12}), thereby providing a precise connection between the combinatorics of tableaux and the combinatorics of cup diagrams involved in the definition of the topological Springer fiber.

Note that condition (ADT3) implies that all horizontal dominoes of an admissible domino tableau of shape $(n-k,k)$ of type $D$ have their left box in an even column. In particular, the domino diagram underlying a given signed domino tableau can be constructed by placing the following ``basic building blocks'' side by side:
\begin{center}
\begin{tikzpicture}[scale=0.7]
\node at (0,.5) {(i)};

\draw (.6,0) -- +(0,1);
\draw (2.1,0) -- +(0,1);
\draw (.6,0) -- +(1.5,0);
\draw (.6,1) -- +(1.5,0);

\draw (1.1,.5) -- +(1,0);
\draw (1.1,0) -- +(0,1);
\node at (2.6,.5) {$\ldots$};

\draw (3.1,0) -- +(0,1);
\draw (4.6,0) -- +(0,1);
\draw (3.1,0) -- +(1.5,0);
\draw (3.1,1) -- +(1.5,0);

\draw (3.1,.5) -- +(1,0);
\draw (4.1,0) -- +(0,1);

\node at (6.8,.5) {(ii)};

\draw (7.5,0)  -- +(0,1);
\draw (9,0)  -- +(0,1);
\draw (7.5,0)  -- +(1.5,0);
\draw (7.5,1)  -- +(1.5,0);

\draw (8,.5)  -- +(1,0);
\draw (8,0)  -- +(0,1);
\node at (9.5,.5) {$\ldots$};

\draw (10,0)  -- +(0,1);
\draw (10,0)  -- +(1,0);
\draw (10,1)  -- +(2,0);

\draw (10,.5)  -- +(2,0);
\draw (11,0)  -- +(0,1);

\draw (12,.5)  -- +(0,.5);

\node at (12.5,.75) {$\ldots$};

\draw (13,.5)  -- +(0,.5);
\draw (14,.5)  -- +(0,.5);
\draw (13,1)  -- +(1,0);
\draw (13,.5)  -- +(1,0);
\end{tikzpicture}
\end{center}

The left type of building block, called a {\it closed cluster}, consists of a collection of horizontal dominos enclosed by two vertical dominoes. The vertical domino on the left lies in an odd column, whereas the right one lies in an even column. Note that it might happen that a closed cluster has no horizontal dominoes. 

The right building block is called an {\it open cluster}. It consists of a vertical domino lying in an odd column and a bunch of horizontal dominoes to its right. There is at most one open cluster in a given signed domino tableau and if it exists it has to be the rightmost building block.

When decomposing a signed domino tableau into clusters we number them from right to left. 

\begin{ex} \label{ex:clusters_of_signed_domino_tableau}
The signed domino tableau
\[
\begin{tikzpicture}[scale=0.7]

\draw (0,0) -- +(0,1);
\draw (0,0) -- +(8.5,0);
\draw (0,1) -- +(10.5,0);
\draw (10.5,.5) -- +(0,.5);
\draw (8.5,0) -- +(0,1);
\draw (7.5,.5) -- +(3,0);

\draw (0.5,0) -- +(0,1);
\draw (1.5,0) -- +(0,1);
\draw (2.5,0) -- +(0,1);
\draw (3,0) -- +(0,1);
\draw (3.5,0) -- +(0,1);
\draw (4,0) -- +(0,1);
\draw (4.5,0) -- +(0,1);
\draw (5.5,0) -- +(0,1);
\draw (6.5,0) -- +(0,1);
\draw (7,0) -- +(0,1);
\draw (7.5,0) -- +(0,1);
\draw (9.5,.5) -- +(0,.5);

\draw (.5,.5) -- +(2,0);
\draw (4.5,.5) -- +(2,0);

\begin{footnotesize}
\node at (.25,.6) {19};
\node at (.25,.25) {$-$};
\node at (1,.75) {18};
\node at (1,.25) {16};
\node at (2,.75) {17};
\node at (2,.25) {15};
\node at (2.75,.6) {14};

\node at (3.25,.6) {13};
\node at (3.25,.25) {$-$};
\node at (3.75,.6) {12};

\node at (4.25,.6) {11};
\node at (4.25,.25) {$+$};
\node at (5,.75) {10};
\node at (5,.25) {9};
\node at (6,.75) {8};
\node at (6,.25) {7};
\node at (6.75,.6) {6};

\node at (7.25,.6) {5};
\node at (7.25,.25) {$-$};
\node at (8,.75) {4};
\node at (8,.25) {2};
\node at (9,.75) {3};
\node at (10,.75) {1};
\end{footnotesize}

\end{tikzpicture}
\]
is built out of the following four clusters:
\[
\mathcal C_1=\begin{tikzpicture}[baseline={(0,.25)}, scale=0.7]
\draw (0,0) -- +(0,1);
\draw (0,0) -- +(1.5,0);
\draw (0,1) -- +(3.5,0);
\draw (.5,0) -- +(0,1);
\draw (1.5,0) -- +(0,1);
\draw (.5,.5) -- +(3,0);
\draw (2.5,.5) -- +(0,.5);
\draw (3.5,.5) -- +(0,.5);

\begin{footnotesize}
\node at (0.25,.6) {5};
\node at (0.25,.25) {$-$};
\node at (1,.75) {4};
\node at (1,.25) {2};
\node at (2,.75) {3};
\node at (3,.75) {1};
\end{footnotesize}
\end{tikzpicture},\,\,
\mathcal C_2=\begin{tikzpicture}[baseline={(0,.25)}, scale=0.7]
\draw (0,0) -- +(0,1);
\draw (0,0) -- +(3,0);
\draw (0,1) -- +(3,0);
\draw (3,0) -- +(0,1);
\draw (.5,0) -- +(0,1);
\draw (1.5,0) -- +(0,1);
\draw (2.5,0) -- +(0,1);
\draw (.5,.5) -- +(2,0);

\begin{footnotesize}
\node at (0.25,.6) {11};
\node at (0.25,.25) {$+$};
\node at (1,.75) {10};
\node at (1,.25) {9};
\node at (2,.75) {8};
\node at (2,.25) {7};
\node at (2.75,.6) {6};
\end{footnotesize}
\end{tikzpicture},\,\,
\mathcal C_3=\begin{tikzpicture}[baseline={(0,.25)}, scale=0.7]
\draw (0,0) -- +(0,1);
\draw (0,0) -- +(1,0);
\draw (0,1) -- +(1,0);
\draw (.5,0) -- +(0,1);
\draw (1,0) -- +(0,1);

\begin{footnotesize}
\node at (0.25,.6) {13};
\node at (0.25,.25) {$-$};
\node at (.75,.6) {12};
\end{footnotesize}
\end{tikzpicture},\,\,
\mathcal C_4=\begin{tikzpicture}[baseline={(0,.25)}, scale=0.7]
\draw (0,0) -- +(0,1);
\draw (0,0) -- +(3,0);
\draw (0,1) -- +(3,0);
\draw (3,0) -- +(0,1);
\draw (.5,0) -- +(0,1);
\draw (1.5,0) -- +(0,1);
\draw (2.5,0) -- +(0,1);
\draw (.5,.5) -- +(2,0);

\begin{footnotesize}
\node at (0.25,.6) {19};
\node at (0.25,.25) {$-$};
\node at (1,.75) {18};
\node at (1,.25) {16};
\node at (2,.75) {17};
\node at (2,.25) {15};
\node at (2.75,.6) {14};
\end{footnotesize}
\end{tikzpicture}.
\]
\end{ex}

In order to understand the relationship between cup diagrams and signed domino tableaux we first explain how to assign a cup diagram to a cluster of a given tableau $T$. 

Let $\mathcal C$ be a cluster (open or closed) of $T$ and consider its {\it standard tableau part}, i.e. the part of the cluster which remains after removing all vertical dominoes. By viewing the horizontal dominoes as boxes of a Young tableau\footnote{If $\mathcal C$ is a closed cluster one has to subtract the number contained in the right vertical domino from the numbers of the horizontal dominoes in order to obtain a correct filling.} one can use bijection (\ref{eq:bij_std_tableaux_undecorated_cup_diags}) to assign an undecorated cup diagram to the standard tableau part $T_\mathcal C$ of $\mathcal C$. 

The cup diagram associated with the entire cluster is constructed as follows: 
\begin{itemize}
\item If $\mathcal C$ is a closed cluster then we enclose the cup diagram corresponding to the standard tableau part by an additional cup. This cup is dotted if and only if the left vertical domino of the cluster has sign $-$:
\[
\Psi\left(\,\begin{tikzpicture}[baseline={(0,.25)},scale=0.7]
\draw (0,0) -- +(0,1);
\draw (1.5,0) -- +(0,1);
\draw (0,0) -- +(2.5,0);
\draw (0,1) -- +(2.5,0);
\draw (2.5,0) -- +(0,1);

\draw (.5,.5) -- +(2,0);
\draw (.5,0) -- +(0,1);
\node at (3,.5) {$\ldots$};

\draw (3.5,0) -- +(0,1);
\draw (5,0) -- +(0,1);
\draw (3.5,0) -- +(1.5,0);
\draw (3.5,1) -- +(1.5,0);

\draw (3.5,.5) -- +(1,0);
\draw (4.5,0) -- +(0,1);

\begin{footnotesize}
\node at (0.25,.35) {$+$};
\node at (0.25,.15) {$-$};
\end{footnotesize}

\end{tikzpicture}\,\right)
=\begin{cases}\hspace{.5em}
\begin{tikzpicture}[baseline={(0,-.25)}, thick, scale=0.7]

\draw (0,0) .. controls +(0,-1.1) and +(0,-1.1) .. +(1.75,0);
\node at (.875,-.25) {$\psi\left(T_\mathcal C\right)$};

\end{tikzpicture} & \text{ if }+, \\
\hspace{.5em}\begin{tikzpicture}[baseline={(0,-.25)}, thick, scale=0.7]

\draw (0,0) .. controls +(0,-1.1) and +(0,-1.1) .. +(1.75,0);
\node at (.875,-.25) {$\psi\left(T_\mathcal C\right)$};
\fill (.875,-.84) circle(3pt);

\end{tikzpicture} & \text{ if }-.
\end{cases}
\]
\item If $\mathcal C$ is an open cluster we add a ray to the right side of the diagram corresponding to the standard tableau part. This ray is dotted if and only if the vertical domino of the cluster has sign $-$:  
\[
\Psi\left(\,\begin{tikzpicture}[baseline={(0,.25)},scale=0.7]
\draw (0,0)  -- +(0,1);
\draw (1.5,0)  -- +(0,1);
\draw (0,0)  -- +(1.5,0);
\draw (0,1)  -- +(1.5,0);

\draw (.5,.5)  -- +(1,0);
\draw (.5,0)  -- +(0,1);
\node at (2,.5) {$\ldots$};

\draw (2.5,0)  -- +(0,1);
\draw (2.5,0)  -- +(1,0);
\draw (2.5,1)  -- +(2,0);

\draw (2.5,.5)  -- +(2,0);
\draw (3.5,0)  -- +(0,1);

\draw (4.5,.5)  -- +(0,.5);

\node at (5,.75) {$\ldots$};

\draw (5.5,.5)  -- +(0,.5);
\draw (6.5,.5)  -- +(0,.5);
\draw (5.5,1)  -- +(1,0);
\draw (5.5,.5)  -- +(1,0);

\begin{footnotesize}
\node at (0.25,.35) {$+$};
\node at (0.25,.15) {$-$};
\end{footnotesize}

\end{tikzpicture}\,\right)
=\begin{cases}
\begin{tikzpicture}[baseline={(0,-.25)}, thick, scale=0.7]

\node at (0,-.25) {$\psi\left(T_\mathcal C\right)$};
\draw (.875,0) -- +(0,-1);

\end{tikzpicture} & \text{ if }+, \\
\begin{tikzpicture}[baseline={(0,-.25)}, thick, scale=0.7]

\node at (0,-.25) {$\psi\left(T_\mathcal C\right)$};
\draw (.875,0) -- +(0,-1);
\fill (.875,-.74) circle(3pt);

\end{tikzpicture} & \text{ if }-.
\end{cases}
\]
\end{itemize}

Finally, we define $\Psi(T)$ as the cup diagram obtained by placing the cup diagrams $\Psi(\mathcal C_i)$ associated with each cluster side by side, starting with $\Psi(\mathcal C_1)$ as the leftmost piece, followed by $\Psi(\mathcal C_2)$ to its right, etc. This is clearly a well-defined cup diagram. Note that the numbers contained in the horizontal dominos in the lower row and the vertical dominos in an odd column are precisely the left endpoints of the cups in $\Psi(T)$.

\begin{lem}[{\cite[Lemma 5.12]{ES12}}] \label{lem:bijection_tableaux_cups}
Let $(n-k,k)$ be an admissible partition of type $D$. The assignment $\Psi$ explained above defines bijections 
\[
ADT_D^\mathrm{sgn}(n-k,k) \overset{1:1}{\longleftrightarrow}\mathbb B^{n-k,k}
\]
\[
ADT_{D,\mathrm{odd}}^\mathrm{sgn}(n-k,k) \overset{1:1}{\longleftrightarrow}\mathbb B^{n-k,k}_\mathrm{odd}.
\]
\end{lem}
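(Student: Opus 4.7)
The plan is to verify well-definedness, construct an explicit inverse, and track the dot/sign parity, building on the equal-row case \cite[Lemma 5.12]{ES12} and reducing the general case to cluster-wise applications of the tableau-to-cup-diagram bijection (\ref{eq:bij_std_tableaux_undecorated_cup_diags}).

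First I would verify that $\Psi(T)\in\mathbb B^{n-k,k}$. Each domino yields exactly one vertex, so $\Psi(T)$ has $m$ vertices. Cup counts are additive over clusters: a closed cluster with $2q$ horizontal dominoes contributes $q+1$ cups (one outer cup plus $q$ cups from $\psi$ applied to a $(q,q)$-shaped standard tableau) while the open cluster, if present, contributes only the $q'$ cups arising from its inner $\psi$. Summing and using that the lower row of $T$ has length $k$ gives precisely $\lfloor k/2\rfloor$ cups. The dot placements are legal: an outer cup of a closed cluster $\mathcal C_i$ can always be reached from the right edge by a path running just beneath the top edge of the rectangle, above the outer cups of the closed clusters $\mathcal C_{i-1},\ldots,\mathcal C_1$, and (if the open cluster exists) passing to the right of its ray, which by construction is attached to the rightmost vertex of the entire cup diagram; the ray itself is reachable by the same argument.

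Next I would construct an inverse $\Phi\colon\mathbb B^{n-k,k}\to ADT^{\mathrm{sgn}}_D(n-k,k)$. Given $\ba$, identify its \emph{exposed} components, namely the cups and rays satisfying the reachability condition from the definition of cup diagrams; by the planar argument above these are precisely the outermost cup of each maximal family of nested cups together with at most one rightmost ray. Each exposed cup together with the cups nested beneath it constitutes a closed cluster, and the exposed ray together with the remaining cups to its left constitutes the open cluster. Applying $\psi^{-1}$ to the undecorated inner diagram of each cluster produces a standard Young tableau, which becomes the standard tableau part of the cluster by reinterpreting its boxes as horizontal dominoes; the enclosing vertical dominoes are labelled $-$ if and only if the corresponding outer cup (respectively ray) is dotted, and $+$ otherwise. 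Cluster-wise, $\Phi$ and $\Psi$ invert one another because in each cluster they reduce to the already-established bijection (\ref{eq:bij_std_tableaux_undecorated_cup_diags}). By construction $\Psi$ introduces exactly one dot per minus sign, so the parity of the number of dots in $\Psi(T)$ coincides with the parity of the number of minus signs in $T$, which yields the restricted bijection $ADT^{\mathrm{sgn}}_{D,\mathrm{odd}}(n-k,k)\leftrightarrow\mathbb B^{n-k,k}_{\mathrm{odd}}$.

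The main technical obstacle is checking that the image of $\Phi$ satisfies the admissibility condition (ADT3), i.e.\ that peeling off clusters from right to left leaves an admissible partition of type $D$ at every stage. This reduces to verifying that each cluster spans an even number of columns of the underlying Young diagram and that, after deleting all clusters to its right, its leftmost column index is odd; these parity statements follow from the structural constraints on exposed components in a crossingless cup diagram with $\lfloor k/2\rfloor$ cups, together with the observation that the open cluster (if any) accounts exactly for the excess $m-k$ horizontal dominoes filling the single-row portion of the shape.
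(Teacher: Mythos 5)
Your overall strategy (reduce cluster-wise to the type $A$ bijection $\psi$ and match the parity of dots with the parity of minus signs) is the right one, but the planar reasoning you base both the well-definedness and the inverse on is incorrect, and the inverse as described is not inverse to $\Psi$. In the paper's construction the open cluster is $\mathcal C_1$ and its piece $\Psi(\mathcal C_1)$ is placed \emph{leftmost}, so all rays of $\Psi(T)$ sit at the left end of the diagram and the added ray is the rightmost \emph{ray} but in general not the rightmost \emph{vertex}. Moreover a path inside the rectangle can never run ``just beneath the top edge, above the outer cups'': cups are attached to the top edge, so a component is reachable from the right edge only by passing \emph{below} the cups to its right, and a ray, running from the top to the bottom edge, separates the rectangle, so nothing strictly to the left of the rightmost ray is ever dottable. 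Consequently your claim that the exposed components are ``the outermost cup of each maximal family of nested cups together with at most one rightmost ray'' is false whenever an outermost cup lies to the left of the rightmost ray; e.g.\ for $\bb\in\mathbb B^{5,3}$ (ray at $1$, cup $2$--$3$, ray at $4$) only the ray at $4$ is dottable. The correct statement, and the one the legality of the dots in $\Psi(T)$ rests on, is that the dottable components are the rightmost ray together with the outermost cups lying to its right, which is exactly where $\Psi$ places the outer cups of the closed clusters (the pieces to the right of $\Psi(\mathcal C_1)$ contain no rays).

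This error breaks your inverse $\Phi$: cups lying to the left of the rightmost ray do \emph{not} bound closed clusters; they belong to the open cluster, being produced by $\psi$ applied to its standard tableau part, and likewise the non-rightmost rays (which correspond to the $m-k$ single-row horizontal dominoes) are never assigned by your recipe. Taken literally, your $\Phi$ would send $\bb$ above to the same tableau as the diagram with rays at $1,2$ and cup $3$--$4$, so it is neither injective nor inverse to $\Psi$. The correct inverse takes the rightmost ray together with \emph{everything} to its left (cups and rays) as the open cluster, applies $\psi^{-1}$ to that whole subdiagram to recover its standard tableau part, and only the outermost cups to the right of the rightmost ray give closed clusters; one then still has to fix the global labelling (shifting labels cluster by cluster from right to left) and verify (ADT2) and (ADT3) for the result. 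Your final sketch of that verification also contains a false parity claim: the open cluster spans $1+2p+2(m-k)$ columns, which is odd, not even; the admissibility at each peeling stage has to be checked directly (both remaining parts odd or equal), which your argument does not do.
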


\begin{ex} 
To the signed domino tableau from Example~\ref{ex:clusters_of_signed_domino_tableau}, $\Psi$ assigns the cup diagram
\[
\begin{tikzpicture}[scale=.8,baseline={(0,-.5)}]
\draw[thick] (.5,0) .. controls +(0,-.5) and +(0,-.5) .. +(.5,0);
\draw[thick] (3,0) .. controls +(0,-.5) and +(0,-.5) .. +(.5,0);
\draw[thick] (4,0) .. controls +(0,-.5) and +(0,-.5) .. +(.5,0);
\draw[thick] (5.5,0) .. controls +(0,-.5) and +(0,-.5) .. +(.5,0);
\draw[thick] (7.5,0) .. controls +(0,-.5) and +(0,-.5) .. +(.5,0);

\draw[thick] (7,0) .. controls +(0,-1) and +(0,-1) .. +(1.5,0);

\draw[thick] (2.5,0) .. controls +(0,-1.5) and +(0,-1.5) .. +(2.5,0);
\draw[thick] (6.5,0) .. controls +(0,-1.5) and +(0,-1.5) .. +(2.5,0);

\fill[thick] (2,-.75) circle(3pt);
\fill[thick] (5.75,-.365) circle(3pt);
\fill[thick] (7.75,-1.115) circle(3pt);

\draw[thick] (0,0) -- +(0,-1.25);
\draw[thick] (1.5,0) -- +(0,-1.25);
\draw[thick] (2,0) -- +(0,-1.25);
\end{tikzpicture}.
\]
\end{ex}

\begin{lem} \label{lem:bijection_tableaux_C_D}
Deleting the leftmost vertical domino in a signed domino tableau of shape $(n-k,k)$ gives rise to bijections
\[
ADT_{D,\mathrm{odd}}^\mathrm{sgn}(n-k,k)\overset{1:1}{\longleftrightarrow}ADT_C^{\mathrm{sgn}}(n-k-1,k-1)
\]
\[
ADT_{D,\mathrm{even}}^\mathrm{sgn}(n-k,k)\overset{1:1}{\longleftrightarrow}ADT_C^{\mathrm{sgn}}(n-k-1,k-1). 
\]
\end{lem}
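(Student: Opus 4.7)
The plan is to first show that the leftmost vertical domino of any $T\in ADT_D^\mathrm{sgn}(n-k,k)$ is always the domino labelled $m$, occupying column $1$. Consider the sequence of shapes $\lambda^{(0)}=(n-k,k),\lambda^{(1)},\ldots,\lambda^{(m)}=\emptyset$ obtained from $T$ by successively removing the dominoes labelled $1,\ldots,i$. By condition (ADT3) each $\lambda^{(i)}$ is admissible of type $D$. In particular $\lambda^{(m-1)}$ is a two-box partition admissible of type $D$; the only such partition is $(1,1)$, because the alternative $(2)$ has the even part $2$ with odd multiplicity. Hence the domino labelled $m$ is the vertical domino occupying positions $(1,1)$ and $(2,1)$. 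Since column $1$ of $T$ has only two cells, both filled by this vertical domino, no other domino of $T$ touches column $1$, so this is unambiguously the leftmost vertical domino of $T$.

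Next I would define the deletion map $\Phi\colon ADT_D^\mathrm{sgn}(n-k,k)\to ADT_C^\mathrm{sgn}(n-k-1,k-1)$ which removes the domino labelled $m$ together with its sign and reindexes every surviving column by $j\mapsto j-1$. An inverse family of maps $\Psi_\epsilon\colon ADT_C^\mathrm{sgn}(n-k-1,k-1)\to ADT_D^\mathrm{sgn}(n-k,k)$, parametrised by $\epsilon\in\{+,-\}$, prepends a new vertical domino labelled $m$ in column $1$ carrying the sign $\epsilon$. The core admissibility check is that the shape shift $(\lambda_1,\lambda_2)\mapsto(\lambda_1-1,\lambda_2-1)$ flips the parity of every part of a two-row partition, which interchanges the conditions ``even parts have even multiplicity'' (type $D$) and ``odd parts have even multiplicity'' (type $C$). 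Applying this to each member of the chain $\lambda^{(0)},\ldots,\lambda^{(m-1)}$ shows that $\Phi(T)$ satisfies (ADT3) for type $C$; the corresponding computation for $\Psi_\epsilon$ uses that $\mu^{(m-1)}=\emptyset$ shifts to $(1,1)$, which is admissible of type $D$.

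For the signs one uses that in a type $D$ signed domino tableau the signs are carried by vertical dominoes in odd columns, whereas in a type $C$ signed domino tableau they sit on vertical dominoes in even columns (as illustrated by the two families in Example~\ref{ex:signed_domino_tableaux}). Reindexing columns by $\mp 1$ under $\Phi$ and $\Psi_\epsilon$ therefore converts one sign convention into the other. This verifies that $\Phi\circ\Psi_\epsilon=\mathrm{id}$ for both choices of $\epsilon$, so $\Phi$ is surjective and exactly $2$-to-$1$, with $\Phi^{-1}(T')=\{\Psi_+(T'),\Psi_-(T')\}$.

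Finally, the two tableaux $\Psi_+(T')$ and $\Psi_-(T')$ differ only by the sign on the newly added column-$1$ domino; their total numbers of minus signs therefore differ by exactly one, so precisely one lies in $ADT_{D,\mathrm{odd}}^\mathrm{sgn}(n-k,k)$ and the other in $ADT_{D,\mathrm{even}}^\mathrm{sgn}(n-k,k)$. Restricting $\Phi$ to either of these two subsets accordingly yields the two bijections claimed in the lemma. I expect the main technical point to be the parity-shift admissibility check on the chains of intermediate shapes (conceptually easy but deserving explicit verification), together with keeping careful track of which column-parity carries the signs in each of the two types.
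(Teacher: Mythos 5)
The paper's own proof of this lemma is the one-line remark ``This follows easily from the above,'' so there is no detailed argument in the paper to compare against. Your proof is correct and supplies exactly the details that remark leaves implicit. The two key observations you use are both sound: (i) condition (ADT3) forces $\lambda^{(m-1)}=(1,1)$, hence the domino labelled $m$ is the vertical domino in column $1$ and is the unique leftmost vertical domino; and (ii) since $\lambda^{(i)}_2\geq 1$ for all $i\leq m-1$ (because $\lambda^{(i)}\supseteq\lambda^{(m-1)}=(1,1)$), the shift $(\lambda_1,\lambda_2)\mapsto(\lambda_1-1,\lambda_2-1)$ is defined on the whole intermediate chain and flips part-parities, exchanging the type-$D$ and type-$C$ admissibility conditions. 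Your reading of the sign convention (odd columns for type $D$, even columns for type $C$) matches the definition and Example~\ref{ex:signed_domino_tableaux}, and the column reindexing converts one convention into the other as you say; together with the observation that the two lifts $\Psi_\pm(T')$ differ by exactly one minus sign, this does give the two claimed bijections after restricting to the parity subsets. One small point: be careful with the notation $\Psi_\epsilon$, since $\Psi$ is already used in this section for the bijection between signed domino tableaux and cup diagrams (Lemma~\ref{lem:bijection_tableaux_cups}); choosing a different letter would avoid a collision. Also, strictly speaking, ``deleting the leftmost vertical domino'' in the statement tacitly includes the column reindexing, which you make explicit — that is the right reading, since without it the result is not a Young diagram.
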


\begin{proof}
This follows easily from the above.
\end{proof}





\section{A smooth variety containing the algebraic Springer fiber} \label{sec:embedded_Springer_fiber}


Let $N>0$ be a large integer (cf.\ Remark \ref{y_remark} for a more accurate description of what is meant by ``large'') and let $z\colon\mathbb C^{2N}\to\mathbb C^{2N}$ be a nilpotent linear endomorphism with two Jordan blocks of equal size, i.e.\ there exists a Jordan basis 
\begin{equation} \label{eq:Jordan_basis_of_z}
\begin{tikzpicture}[baseline={(0,0)}]
\node (e1) at (0,0) {$e_1$};
\node (e2) at (1,0) {$e_2$};
\node (space1) at (2,0) {$\ldots{}^{}$};
\node (eN) at (3.1,0) {$e_N$};
\node (f1) at (5,0) {$f_1$};
\node (f2) at (6,0) {$f_2$};
\node (space2) at (7,0) {$\ldots{}^{}$};
\node (fN) at (8.1,0) {$f_N$.};
\path[->,font=\scriptsize,>=angle 90,bend right]
(e2) edge (e1)
(space1) edge (e2)
(eN) edge (space1)
(f2) edge (f1)
(space2) edge (f2)
(fN) edge (space2);
\end{tikzpicture}
\end{equation}
on which $z$ acts as indicated (the vectors $e_1$ and $f_1$ are sent to zero). We equip $\mathbb C^{2N}$ with a hermitian structure by declaring $e_1,\ldots,e_N,f_1,\ldots,f_N$ to be an orthonormal basis.

Let $e,f$ be the standard basis of $\mathbb C^2$ and let $C \colon \mathbb C^{2N} \to \mathbb C^2$ be the linear map defined by $C(e_i)=e$ and $C(f_i)=f$, $i \in \{1,\ldots,N\}$. Note that $\mathbb C^2$ has the structure of a unitary vector space coming from the standard Hermitian inner product. 

The following lemma is well known \cite[Lemma 2.2]{CK08} (cf.\ also \cite[Lemma 2.1]{Weh09}).

\begin{lem} \label{lem:lin_alg_lem} 
Let $U \subseteq \mathbb C^{2N}$ be a $z$-stable subspace, i.e.\ $zU \subseteq U$, such that $U \subseteq \im(z)$. Then $C$ restricts to a unitary isomorphism $C: z^{-1}U \cap U^{\perp} \xrightarrow\cong \mathbb C^2$.
\end{lem}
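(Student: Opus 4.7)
My plan has three steps: a dimension count, injectivity of $C$ restricted to $z^{-1}U \cap U^\perp$, and the isometry property.

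For the dimension count, since $U$ is $z$-stable we have $U \subseteq z^{-1}U$, so orthogonal decomposition in the Hilbert space $\mathbb C^{2N}$ gives $z^{-1}U = U \oplus (z^{-1}U \cap U^\perp)$. The restriction $z\colon z^{-1}U \to U$ is surjective because $U \subseteq \im(z)$, with kernel $\ker(z) \subseteq z^{-1}U$, so $\dim(z^{-1}U) = \dim U + 2$ and hence $\dim(z^{-1}U \cap U^\perp) = 2$. For injectivity, suppose $v = \sum_i a_i e_i + \sum_j b_j f_j$ lies in $z^{-1}U \cap U^\perp$ with $C(v) = 0$, i.e.\ $\sum_i a_i = \sum_j b_j = 0$. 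The natural tool is the adjoint $z^*$, acting as the up-shift ($z^* e_i = e_{i+1}$ for $i<N$, $z^* e_N = 0$, and analogously on the $f$-vectors) and satisfying $z^* z = I - \pi_{\ker z}$, where $\pi_{\ker z}$ is orthogonal projection onto $\ker(z) = \spn(e_1,f_1)$. Using the identity $v = z^*(zv) + \pi_{\ker z}(v)$ together with $zv \in U$ and $v \perp U$, I expect to set up a finite recursion on the coefficients of $v$ in which the vanishing of the two sums eventually forces all $a_i, b_j$ to be zero.

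Combined with the dimension count, injectivity already yields a linear isomorphism $z^{-1}U\cap U^\perp \to \mathbb C^2$, and it remains to verify $\|v\|^2 = \|C(v)\|^2$ for all $v$ in this two-dimensional subspace in order to upgrade it to a unitary one. Computations in small cases strongly suggest that $z^{-1}U \cap U^\perp$ admits a basis $\{v_1, v_2\}$ in which each vector has the special form $v_k = \alpha_k e_{i_k} + \beta_k f_{j_k}$; on such vectors the identity $\|\alpha e_i + \beta f_j\|^2 = |\alpha|^2 + |\beta|^2 = \|\alpha e + \beta f\|^2$ makes the diagonal part of the isometry immediate, while the off-diagonal relation $\langle v_1, v_2\rangle = \langle C(v_1), C(v_2)\rangle$ needs to be verified separately using the indices $i_1,i_2$ (respectively $j_1,j_2$) that appear. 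I would construct such a basis by induction on $\dim U$, splitting into cases according to $\dim(U \cap \ker z) \in \{0,1,2\}$ and using the $z$-module decomposition of $U$ in the inductive step.

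The main obstacle is this isometry step. It is not formal and uses essentially all three hypotheses: that $z$ has \emph{equal}-sized Jordan blocks, so $e$ and $f$ play symmetric roles under $C$; that $U$ is $z$-stable, which forces a chain structure aligned with the fixed Jordan basis; and that $U \subseteq \im(z)$, which ensures $z^{-1}U$ is large enough for ``small-support'' vectors of the above form to exist inside $U^\perp$. Without any one of these ingredients the restriction of $C$ would merely be a linear isomorphism rather than a unitary one.
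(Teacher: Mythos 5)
Your dimension count is correct (and is exactly what any proof needs), but the heart of your argument --- the isometry step --- rests on a structural claim that is false. It is not true that $z^{-1}U\cap U^{\perp}$ always admits a basis of vectors of the form $\alpha e_{i}+\beta f_{j}$. Take $N\geq 3$ and $U=\spn(e_{1}+f_{1},\,e_{1}-f_{1}+e_{2}+f_{2})$; this is $z$-stable (the second generator maps to the first, the first to $0$) and contained in $\im(z)$. One computes $z^{-1}U\cap U^{\perp}=\spn(v_{1},v_{2})$ with $v_{1}=e_{1}-f_{1}-e_{2}-f_{2}$ and $v_{2}=e_{2}-f_{2}+e_{3}+f_{3}$, and a general element $a_{1}v_{1}+a_{3}v_{2}$ has coordinates $(a_{1},-a_{1},a_{3}-a_{1},-a_{1}-a_{3},a_{3},a_{3})$ with respect to $e_{1},f_{1},e_{2},f_{2},e_{3},f_{3}$. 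For no $(a_{1},a_{3})\neq(0,0)$ are at most one $e$-coordinate and at most one $f$-coordinate nonzero, so this $2$-dimensional space contains no nonzero ``two-term'' vector whatsoever, and your proposed induction on $\dim U$ cannot produce the basis you need. (The lemma itself of course holds here: $C(a_{1}v_{1}+a_{3}v_{2})=2a_{3}e-2a_{1}f$ has squared norm $4|a_{1}|^{2}+4|a_{3}|^{2}=\|a_{1}v_{1}+a_{3}v_{2}\|^{2}$.) Your injectivity step is also only a sketch --- the ``finite recursion'' is never set up --- but that is the smaller issue, since injectivity is an immediate consequence of the isometry once the latter is proved.

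Note that the paper does not prove this lemma at all; it quotes it from \cite[Lemma 2.2]{CK08} (see also \cite[Lemma 2.1]{Weh09}), and the argument there, which would also repair your proof, avoids any normal form for $U$. For arbitrary $v=\sum_{i}a_{i}e_{i}+\sum_{j}b_{j}f_{j}$ and $w=\sum_{i}c_{i}e_{i}+\sum_{j}d_{j}f_{j}$ one has the elementary identity $\langle C(v),C(w)\rangle=\langle v,w\rangle+\sum_{k\geq 1}\bigl(\langle z^{k}v,w\rangle+\langle v,z^{k}w\rangle\bigr)$, obtained by sorting the terms of $\bigl(\sum_{i}a_{i}\bigr)\overline{\bigl(\sum_{i}c_{i}\bigr)}+\bigl(\sum_{j}b_{j}\bigr)\overline{\bigl(\sum_{j}d_{j}\bigr)}$ according to the difference of indices. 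If $v,w\in z^{-1}U\cap U^{\perp}$, then $z^{k}v=z^{k-1}(zv)\in U$ and likewise $z^{k}w\in U$ for all $k\geq 1$ (this uses $zv,zw\in U$ and the $z$-stability of $U$), while $v,w\perp U$, so every correction term vanishes and $C$ is an isometry on $z^{-1}U\cap U^{\perp}$. Combined with your (correct) dimension count, which is where $U\subseteq\im(z)$ enters, this yields the unitary isomorphism and injectivity in one stroke.
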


Following \cite{CK08} we define a smooth projective variety 
\begin{displaymath}
Y_m:=\big\{\left(F_1,\ldots,F_m\right) \mid F_i \subseteq \mathbb C^{2N} \textrm{has dimension } i ,\, F_1\subseteq\ldots\subseteq F_m,\, zF_i\subseteq F_{i-1}\big\}.
\end{displaymath}

\begin{rem} \label{y_remark}
Note that the conditions $zF_i\subseteq F_{i-1}$ imply
\begin{displaymath}
F_m \subseteq z^{-1}F_{m-1} \subseteq\ldots\subseteq z^{-m}(0) = \spn(e_1,\ldots,e_m,f_1,\ldots,f_m). 
\end{displaymath}
In particular, the variety $Y_m$ is independent of the choice of $N$ as long as $N \geq m$. In particular, we can always assume (by increasing $N$ if necessary) that all the subspaces of a flag in $Y_m$ are contained in the image of $z$. 
\end{rem}


\begin{prop}[{\cite[Theorem 2.1]{CK08}}] \label{homeo}
The map $\phi_m: Y_m \rightarrow (\mathbb P^1)^m$ defined by
\begin{displaymath}
(F_1,\ldots,F_m) \mapsto \left(C(F_1),C(F_2 \cap F_1^\perp),\ldots,C(F_m \cap F_{m-1}^\perp)\right)
\end{displaymath}
is a diffeomorphism. 
\end{prop}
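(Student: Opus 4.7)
The plan is to verify that $\phi_m$ is well-defined, construct an explicit inverse with the aid of Lemma~\ref{lem:lin_alg_lem}, and then check smoothness in both directions. By Remark~\ref{y_remark} we may assume $F_m\subseteq\im(z)$, so every $F_i\subseteq\im(z)$. Each $F_{i-1}$ is $z$-stable (for $i\geq 2$ because $zF_{i-1}\subseteq F_{i-2}\subseteq F_{i-1}$, and trivially for $i=1$ as $F_0=0$), so Lemma~\ref{lem:lin_alg_lem} applied with $U=F_{i-1}$ produces a unitary isomorphism $C\colon z^{-1}F_{i-1}\cap F_{i-1}^\perp\xrightarrow{\cong}\mathbb C^2$. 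Since $F_i\subseteq z^{-1}F_{i-1}$ by the Springer condition, the one-dimensional subspace $F_i\cap F_{i-1}^\perp$ (the orthogonal complement of $F_{i-1}$ inside $F_i$) is mapped by $C$ to a line in $\mathbb C^2$, and $\phi_m$ is well-defined.

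The inverse $\psi\colon(\mathbb P^1)^m\to Y_m$ is built inductively: set $F_0=0$, and given $(L_1,\ldots,L_m)$ together with $F_0\subsetneq\cdots\subsetneq F_{i-1}$ already constructed so that $zF_j\subseteq F_{j-1}$, put
\[
F_i := F_{i-1}\oplus\bigl(C|_{z^{-1}F_{i-1}\cap F_{i-1}^\perp}\bigr)^{-1}(L_i),
\]
using Lemma~\ref{lem:lin_alg_lem} once more. By construction $F_i$ has dimension $i$ and satisfies $zF_i\subseteq F_{i-1}$ (since $z^{-1}F_{i-1}\cap F_{i-1}^\perp\subseteq z^{-1}F_{i-1}$), and tracing through the definitions yields $\psi\circ\phi_m=\mathrm{id}_{Y_m}$ and $\phi_m\circ\psi=\mathrm{id}_{(\mathbb P^1)^m}$.

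Both $\phi_m$ and $\psi$ are built from the operations $F\mapsto F^\perp$, $F\mapsto z^{-1}F$, intersection of smoothly varying subspaces, and application (respectively inversion) of the linear map $C$ on a rank-$2$ subbundle, all of which depend smoothly on the parameterizing flag provided the intermediate dimensions remain constant in families. The main obstacle is precisely this constancy-of-dimension verification: Lemma~\ref{lem:lin_alg_lem} is the essential tool, as it guarantees that $z^{-1}F_{i-1}\cap F_{i-1}^\perp$ has dimension exactly $2$ for every admissible $F_{i-1}$, so it assembles into a smooth rank-$2$ vector bundle to which the fiberwise inverses of $C$ apply smoothly. Provided the preliminary reduction to $F_m\subseteq\im(z)$ from Remark~\ref{y_remark} is made at the outset, the smoothness of $\phi_m$ and $\psi$ reduces to these routine bundle-theoretic considerations.
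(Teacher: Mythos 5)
The paper does not prove this proposition itself---it imports it verbatim from \cite[Theorem 2.1]{CK08}---and your argument is essentially the proof given there (and reproduced in \cite{Weh09}): well-definedness and the inductive construction of the inverse both rest on Lemma~\ref{lem:lin_alg_lem}, exactly as in the cited source, and your verification is correct. The only point worth making explicit is that in the inverse construction each $F_{i-1}$ really does lie in $\im(z)$, since inductively $F_{i-1}\subseteq z^{-(i-1)}(0)$ and $N$ is large (Remark~\ref{y_remark}), so Lemma~\ref{lem:lin_alg_lem} is applicable at every step and the rank-two bundle underlying your smoothness argument is well defined.
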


We fix a partition $(n-k,k)$ of $n=2m>0$ labelling a nilpotent orbit of type $D$, $1\leq k\leq m$. Let $E_{n-k,k}\subseteq\mathbb C^{2N}$ be the subspace spanned by $e_1,\ldots,e_{n-k},f_1,\ldots,f_k$. We equip $E_{n-k,k}$ with a bilinear form $\beta^{n-k,k}_D$ defined as follows: 
\begin{itemize}
\item If $k=m$ we define for all $j,j'\in\{1,\ldots,k\}$:
\[
\beta_D^{k,k}(e_{j'},f_j)=\beta_D^{k,k}(f_j,e_{j'})=(-1)^{j-1}\delta_{j+j',k+1},
\]
\[
\beta_D^{k,k}(e_j,e_{j'})=0 \hspace{1.6em}\text{ and }\hspace{1.6em} \beta_D^{k,k}(f_j,f_{j'})=0. 
\]
\item If $k<m$ we define
\[
\beta_D^{n-k,k}(e_i,f_j)=0, \hspace{1.6em} \beta_D^{n-k,k}(e_i,e_{i'})=(-1)^{i-1}\delta_{i+i',n-k+1}, \hspace{1.6em} \beta_D^{n-k,k}(f_j,f_{j'})=(-1)^j\delta_{j+j',k+1},
\]
for all $i,i'\in\{1,\ldots,n-k\}$ and $j,j'\in\{1,\ldots,k\}$. 
\end{itemize}
Note that the bilinear form $\beta^{n-k,k}_D$ is nondegenerate and symmetric. Moreover, a straightforward computation shows that $\beta^{n-k,k}_D(z(v),w)=-\beta^{n-k,k}_D(v,z(w))$ for all $v,w\in E_{n-k,k}$, i.e.\ the restriction $z_{n-k,k}$ of $z$ to the subspace $E_{n-k,k}$ is a nilpotent endomorphism in the orthogonal Lie algebra associated with $\beta^{n-k,k}_D$.

Similarly, we equip $E_{n-k-1,k-1}\subseteq\mathbb C^{2N}$, which is the span of $e_1,\ldots,e_{n-k-1},f_1,\ldots,f_{k-1}$, with a bilinear form $\beta^{n-k-1,k-1}_C$ defined as follows:
\begin{itemize}
\item If $k=m$ we define for all $j,j'\in\{1,\ldots,k-1\}$:
\[
-\beta_C^{k-1,k-1}(e_{j'},f_j)=\beta_C^{k-1,k-1}(f_j,e_{j'})=(-1)^{j-1}\delta_{j+j',k},
\]
\[
\beta_C^{k-1,k-1}(e_j,e_{j'})=0 \hspace{1.6em}\text{ and }\hspace{1.6em} \beta_C^{k-1,k-1}(f_j,f_{j'})=0.
\]
\item If $k<m$ we define $\beta_C^{n-k-1,k-1}(e_i,f_j)=0$ and 
 \[
\beta_C^{n-k-1,k-1}(e_i,e_{i'})=\begin{cases}
(-1)^i\delta_{i+i',n-k} & \text{if }i<i'\\
(-1)^{i-1}\delta_{i+i',n-k} & \text{if }i>i'
\end{cases}
\]
\[
\beta_C^{n-k-1,k-1}(f_j,f_{j'})=\begin{cases}
(-1)^{j-1}\delta_{j+j',k} & \text{if }j<j'\\
(-1)^j\delta_{j+j',k} & \text{if }j>j'
\end{cases}
\]
for all $i,i'\in\{1,\ldots,n-k-1\}$ and $j,j'\in\{1,\ldots,k-1\}$.
\end{itemize}

Note that the bilinear form $\beta^{n-k-1,k-1}_C$ is nondegenerate and symplectic. We also see that the restriction $z_{n-k-1,k-1}$ of $z$ to $E_{n-k-1,k-1}$ is contained in the symplectic Lie algebra associated with $\beta_C^{n-k-1,k-1}$. The following observation is now trivial:

\begin{lem} \label{lem:embedded_Springer_fiber}
We can view the Springer fiber $\mathcal Fl^{n-k,k}_D$ as a subvariety of $Y_m$ via the following identification 
\begin{equation} \label{eq:type_D_embedding}
\mathcal Fl^{n-k,k}_D \cong \left\{(F_1,\dots,F_m) \in Y_m \left|\;\vcenter{\hbox{$F_m$ is contained in $E_{n-k,k}$ and}
																																					 \hbox{isotropic with respect to $\beta^{n-k,k}_D$}}
																														\right.\right\}
\end{equation}
and similarly
\begin{equation} \label{eq:type_C_embedding}
\mathcal Fl^{n-k-1,k-1}_C \cong \left\{(F_1,\dots,F_{m-1}) \in Y_m \left|\;\vcenter{\hbox{$F_{m-1}$ is contained in $E_{n-k-1,k-1}$ and} 
																																										\hbox{isotropic with respect to $\beta^{n-k-1,k-1}_C$}}
																																	  \right.\right\}. 
\end{equation}
\end{lem}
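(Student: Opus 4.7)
The plan is straightforward: both identifications are obtained by viewing the intrinsic data of the Springer fiber (a flag in a smaller space, isotropic for a bilinear form) inside the ambient variety $Y_m$ (resp.\ $Y_{m-1}$), and checking that the defining conditions on each side are equivalent. I will first treat the type $D$ identification; the type $C$ case is completely analogous.

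To begin, I would observe that the restriction $z_{n-k,k} = z|_{E_{n-k,k}}$ is well-defined since $E_{n-k,k}$ is spanned by Jordan-basis vectors of $z$, and by inspection of \eqref{eq:Jordan_basis_of_z} this restriction has Jordan type $(n-k,k)$. Combined with the already-noted skew-symmetry $\beta_D^{n-k,k}(z(v),w) = -\beta_D^{n-k,k}(v,z(w))$, this makes $z_{n-k,k}$ a legitimate nilpotent endomorphism for defining the Springer fiber $\mathcal Fl^{n-k,k}_D$ inside the flag variety of $(E_{n-k,k}, \beta_D^{n-k,k})$.

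Next I would set up the identification in both directions. Given a full isotropic flag $(F_1,\ldots,F_m) \in \mathcal Fl^{n-k,k}_D$, each $F_i$ is a subspace of $F_m \subseteq E_{n-k,k} \subseteq \mathbb C^{2N}$, and the identity $z|_{E_{n-k,k}} = z_{n-k,k}$ yields $zF_i = z_{n-k,k}F_i \subseteq F_{i-1}$, so the tuple defines a point of $Y_m$ satisfying the conditions on the right-hand side of \eqref{eq:type_D_embedding}. Conversely, given $(F_1,\ldots,F_m) \in Y_m$ with $F_m \subseteq E_{n-k,k}$ isotropic for $\beta_D^{n-k,k}$, the dimension count $\dim E_{n-k,k} = 2m$ forces $F_m$ to be a maximal isotropic subspace, and since $F_i \subseteq F_m$ we obtain a full isotropic flag in $E_{n-k,k}$ satisfying $z_{n-k,k}F_i \subseteq F_{i-1}$, i.e.\ a point of $\mathcal Fl^{n-k,k}_D$. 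These assignments are visibly mutually inverse.

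Finally, I would note that the right-hand side of \eqref{eq:type_D_embedding} is cut out of $Y_m$ by closed algebraic conditions (containment $F_m \subseteq E_{n-k,k}$ and polynomial vanishing of $\beta_D^{n-k,k}$ on $F_m \times F_m$), and the bijection above is manifestly algebraic, so it is an isomorphism of varieties. The type $C$ statement \eqref{eq:type_C_embedding} then follows by repeating the argument verbatim with $Y_{m-1}$, $E_{n-k-1,k-1}$, and $\beta_C^{n-k-1,k-1}$; here $\dim E_{n-k-1,k-1} = 2(m-1)$, so the maximal isotropic subspaces have dimension $m-1$, which matches the length of the flag. There is no genuine obstacle in this proof; as the authors themselves indicate, the lemma amounts to unpacking definitions once the Jordan type and skew-symmetry of $z_{n-k,k}$ (and of $z_{n-k-1,k-1}$) have been recorded.
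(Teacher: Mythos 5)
Your proof is correct and is essentially the argument the paper has in mind: the paper states this lemma without proof, calling it a trivial observation once the forms $\beta^{n-k,k}_D$, $\beta^{n-k-1,k-1}_C$ and the restrictions $z_{n-k,k}$, $z_{n-k-1,k-1}$ (nondegenerate, of the right symmetry type, and with the right Jordan types) have been recorded, and your write-up simply fills in that routine definition-unpacking. Your reading of the type $C$ identification inside $Y_{m-1}$ (rather than the $Y_m$ appearing in the displayed statement, evidently a typo) is the intended one, as the later use of $\mathcal Fl^{n-k-1,k-1}_C\subset Y_{m-1}$ confirms.
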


From now on we will always write $\mathcal Fl^{n-k,k}_D$ and $\mathcal Fl^{n-k-1,k-1}_C$ for the embedded Springer varieties via identifications (\ref{eq:type_D_embedding}) and (\ref{eq:type_C_embedding}).


\begin{lem} \label{z_respects_isotropy_lem}
Let $U \subseteq \mathbb C^{2N}$ be a subspace. If $zU$ is contained in $E_{n-k-2,k-2}$ (resp.\ $E_{n-k-3,k-3}$) and isotropic with respect to $\beta^{n-k-2,k-2}_D$ (resp.\ $\beta^{n-k-3,k-3}_C$) then $U$ is contained in $E_{n-k,k}$ (resp.\ $E_{n-k-1,k-1}$) and isotropic with respect to $\beta^{n-k,k}_D$ (resp.\ $\beta^{n-k-1,k-1}_C$).
\end{lem}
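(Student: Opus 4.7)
The plan is to handle the containment and the isotropy conditions separately, and to treat the type $D$ and type $C$ statements in parallel since both rest on the same mechanism.

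For the containment, my approach is to use the action of $z$ on the distinguished Jordan basis $(e_1,\ldots,e_N,f_1,\ldots,f_N)$. Since $z$ sends $e_i\mapsto e_{i-1}$ and $f_j\mapsto f_{j-1}$ with $e_1,f_1$ in the kernel, a vector $v=\sum_i a_ie_i+\sum_j b_jf_j$ satisfies $zv\in E_{n-k-2,k-2}$ if and only if $a_i=0$ for $i\geq n-k$ and $b_j=0$ for $j\geq k$. Hence $z^{-1}(E_{n-k-2,k-2})=E_{n-k-1,k-1}\subseteq E_{n-k,k}$, which handles the type $D$ containment. The identical argument with indices shifted by one yields $z^{-1}(E_{n-k-3,k-3})=E_{n-k-2,k-2}\subseteq E_{n-k-1,k-1}$ for the type $C$ containment.

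For the isotropy, the key will be to establish the pointwise identity
\[
\beta^{n-k,k}_D(u,u')=-\beta^{n-k-2,k-2}_D(zu,zu')\qquad\text{for all }u,u'\in E_{n-k-1,k-1}
\]
together with the analogous identity $\beta^{n-k-1,k-1}_C(u,u')=\varepsilon_C\cdot\beta^{n-k-3,k-3}_C(zu,zu')$ on $E_{n-k-2,k-2}$ for some universal sign $\varepsilon_C\in\{\pm1\}$. Once these are in place the lemma follows at once: the right hand sides vanish on $U\times U$ by hypothesis, hence so do the left hand sides. To prove the identities I would simply expand $u$ and $u'$ in the Jordan basis. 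Because the forms are supported only on antidiagonal pairs of basis vectors, applying $z$ to both arguments shifts each index by one, moving the supporting Kronecker delta from $i+i'=n-k+1$ to $i+i'=n-k-1$ (and similarly from $j+j'=k+1$ to $j+j'=k-1$), while the exponent of $(-1)$ drops by one; this produces exactly the single global sign flip that the identity predicts. The truncation of the sums caused by the coefficients being supported on indices $\leq n-k-1$ and $\leq k-1$ is harmless because the missing terms pair only with vanishing coefficients on the other side.

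The principal obstacle I anticipate is the sign bookkeeping across the four subcases ($k<m$ versus $k=m$, in each of types $D$ and $C$), since the nonzero pairings take different shapes: in the $k=m$ case the forms pair $e$-vectors with $f$-vectors rather than $e$ with $e$ and $f$ with $f$, and for type $C$ the exponent in $(-1)$ depends on which of the two arguments carries the smaller index. Nevertheless each case reduces to the same pattern of index shift plus single sign flip, so the real work is just verifying that the global sign is independent of the case, after which the proof collapses to the short deduction sketched above.
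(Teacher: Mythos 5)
Your proposal follows essentially the same route as the paper's proof: the containment is handled via $U\subseteq z^{-1}(zU)\subseteq z^{-1}(E_{n-k-2,k-2})=E_{n-k-1,k-1}\subseteq E_{n-k,k}$, and isotropy is reduced to the identity $\beta^{n-k,k}_D(v,w)=-\beta^{n-k-2,k-2}_D(zv,zw)$ on $E_{n-k-1,k-1}$, verified by expanding in the distinguished Jordan basis and tracking the index shift and sign flip (the paper writes out the sums explicitly and likewise notes that the boundary coefficients $\lambda_{n-k},\nu_{n-k},\mu_k,\xi_k$ vanish). Your observations about the sign $\varepsilon_C$ and the subcase bookkeeping match the paper's structure, which also treats only the type $D$ case in detail and asserts the type $C$ case is analogous.
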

\begin{proof}
We only prove the lemma for the type $D$ case since the type $C$ case works completely analogous. By combining the obvious inclusion $E_{n-k-1,k-1} \subseteq E_{n-k,k}$ with the inclusion
\begin{equation} \label{eq:inclusion}
U \subseteq z^{-1}\left(zU\right) \subseteq z^{-1}\left(E_{n-k-2,k-2}\right)=E_{n-k-1,k-1} 
\end{equation}
we obtain $U \subseteq E_{n-k,k}$. Hence, it suffices to show that $U$ is isotropic with respect to $\beta^{n-k,k}_D$. 

Pick two arbitrary elements $v,w \in U \subseteq E_{n-k,k}$ and write
\[
v=\sum_{i=1}^{n-k}\lambda_ie_i+\sum_{j=1}^k\mu_jf_j \hspace{1.6em}\text{ and }\hspace{1.6em} w=\sum_{i=1}^{n-k}\nu_ie_i+\sum_{j=1}^k\xi_jf_j.
\]
Note that $\lambda_{n-k}=\nu_{n-k}=0$ and $\mu_k=\xi_k=0$ because $v,w\in E_{n-k-1,k-1}$ by (\ref{eq:inclusion}). A straightforward calculation (using the definition of the bilinear form) shows that
\begin{equation} \label{eq:first_calculation}
\beta^{n-k,k}_D(v,w)=\begin{cases}
	\sum_{i=2}^{k-1}(-1)^{i+1}\left(\xi_i\lambda_{k-i+1}+\mu_i\nu_{k-i+1}\right) & \text{if }k=m,\\
	\sum_{i=2}^{n-k-1}(-1)^{i+1}\lambda_i\nu_{n-k+1-i}+\sum_{i=2}^{k-1}(-1)^i\mu_i\xi_{k+1-i} & \text{if }k<m.
	\end{cases}
\end{equation}

In order to see that this is zero we apply $z$ to $v,w$ which yields
\[
z(v)=\sum_{i=1}^{n-k-2}\lambda_{i+1}e_i+\sum_{j=1}^{k-2}\mu_{j+1}f_j\,,\,\,z(w)=\sum_{i=1}^{n-k-2}\nu_{i+1}e_i+\sum_{j=1}^{k-2}\xi_{j+1}f_j.
\]
Another computation shows that
\begin{equation} \label{eq:second_calculation}
\beta^{n-k-2,k-2}_D\left(z(v),z(w)\right)=\begin{cases}
	\sum_{i=2}^{k-1}(-1)^i\left(\xi_i\lambda_{k-i+1}+\mu_i\nu_{k-i+1}\right) & \text{if }k=m,\\
	\sum_{i=2}^{n-k-1}(-1)^i\lambda_i\nu_{n-k+1-i}+\sum_{i=2}^{k-1}(-1)^{i-1}\mu_i\xi_{k+1-i} & \text{if }k<m.
	\end{cases}
\end{equation}

By comparing (\ref{eq:first_calculation}) and (\ref{eq:second_calculation}) we deduce that $\beta^{n-k,k}_D(v,w)=-\beta^{n-k-2,k-2}_D(z(v),z(w))$. Since by assumption $zU$ is isotropic with respect to $\beta^{n-k-2,k-2}_D$ we know that the right hand side of this equality must be zero. This proves the lemma.
\end{proof}

\section{Proof of the main theorems} \label{sec:topology_irred_comp}

In this section we prove our main results (see Theorem~\ref{thm:main_result_1} and Theorem~\ref{thm:main_result_2}). We fix an admissible partition $(n-k,k)$ of $n=2m>0$ of type $D$, $1\leq k\leq m$. 

\subsection{The diffeomorphism \texorpdfstring{$\gamma_{n-k,k}$}{gamma}}

In this subsection we define the diffeomorphism $\gamma_{n-k,k}$ and compute the images of the submanifolds $S_\ba\subset\left(\mathbb S^2\right)^m$ for all $\ba\in\mathbb B^{n-k,k}$.

Consider the stereographic projection
\[
\mathbb R^3 \supset\mathbb S^2\setminus\{p\} \xrightarrow\sigma \mathbb C\,,\,\,(x,y,z)\mapsto \frac{x}{1-z}+{\bf i}\frac{y}{1-z}
\]
and the map $\theta\colon\mathbb C\to\mathbb P^1\backslash\{\spn(e)\}$, $\lambda \mapsto \spn(\lambda e+f)$, which can be combined to define a diffeomorphism
\[
\gamma\colon\mathbb S^2 \to \mathbb P^1\,,\,\,(x,y,z) \mapsto \begin{cases}
  \theta\left(\sigma(x,y,z)\right) & \text{if }(x,y,z)\neq p,\\
  \spn(e) & \text{if }(x,y,z)=p.
\end{cases}
\]
This induces a diffeomorphism $\gamma_m \colon \left(\mathbb S^2\right)^m \to \left(\mathbb P^1\right)^m$ on the $m$-fold products by setting 
\[
\gamma_m(x_1,\ldots,x_m):=\left(\gamma(x_1),\ldots,\gamma(x_m)\right).
\]

Moreover, consider the diffeomorphisms $s\colon\mathbb S^2\to\mathbb S^2$, $(x,y,z)\mapsto (x,z,y)$, and $t\colon\mathbb S^2\to\mathbb S^2$, $(x,y,z)\mapsto (z,y,x)$. These yield diffeomorphisms $s_m,t_m\colon\left(\mathbb S^2\right)^m \to\left(\mathbb S^2\right)^m$ by taking $m$-fold products of the respective maps.

We define the diffeomorphisms $\gamma_{n-k,k}\colon\left(\mathbb S^2\right)^m\to\left(\mathbb P^1\right)^m$ as follows:
\[
\gamma_{n-k,k}:=
\begin{cases}
\gamma_m & \text{if }m=k,\\
\gamma_m\circ t_m & \text{if }m-k\text{ is odd},\\
\gamma_m\circ s_m & \text{if }m-k\text{ is even}.
\end{cases}
\]

Given a cup diagram, we write $i \CupConnect j$ (resp. $i \DCupConnect j$) if the vertices $i<j$ are connected by a cup (resp. dotted cup) and $i \RayConnect$ (resp. $i \DRayConnect$) if there is a ray (resp. dotted ray) attached to the vertex $i$. If $\ba\in\mathbb B^{n-k,k}$ and $k\neq m$ let $\rho(\ba)\in\{1,\ldots,m\}$ denote the vertex connected to the rightmost ray in $\ba$.

\begin{defi} \label{defi:topological_P1_components}
Let $\ba \in\mathbb B^{n-k,k}$ be a cup diagram. We define $T_\ba\subset\left(\mathbb P^1\right)^m$ as the set consisting of all $m$-tuples $(l_1,\ldots,l_m) \in (\mathbb P^1)^m$ whose entries satisfy the following list of relations:
\begin{itemize}
\item If $k=m$ we impose the relations
\[
\mathrm{(R1)}\,\,l_i^\perp = l_j  \hspace{.8em} \text{ if } i \CupConnect j \hspace{1.9em} \mathrm{(R3)}\,\,x_i = \spn(f)  \hspace{.8em} \text{ if } i \RayConnect
\]
\[
\mathrm{(R2)}\,\,l_i = l_j  \hspace{.8em} \text{ if } i \DCupConnect j \hspace{1.9em} \mathrm{(R4)}\,\,x_i = \spn(e) \hspace{.8em} \text{ if } i \DRayConnect.
\] 
for all $i,j\in\{1,\ldots,m\}$.
\item If $k\neq m$ we impose the relations 
\[
\textrm{(R1')}\,\,l_i^\perp = l_j  \hspace{.8em} \text{ if } i \CupConnect j \hspace{1.9em} \textrm{(R2')}\,\,l_i = l_j  \hspace{.8em} \text{ if } i \DCupConnect j \hspace{1.9em} \textrm{(R3')}\,\,l_i = \spn(e) \hspace{.8em} \text{ if } i \RayConnect
\]
for all $i,j\in\{1,\ldots,m\}\setminus\{\rho(\ba)\}$ and the additional relation  
\[
\textrm{(R4')}\,\,l_{\rho(\ba)}=\begin{cases}
\spn\left(e+(-1)^{\epsilon}f\right) & \text{if }m-k\text{ is even,}\\
\spn\left({\bf i}e+(-1)^{\epsilon}f\right) & \text{if }m-k\text{ is odd,}
\end{cases}
\]
where $\epsilon=0$ if $\rho(\ba) \DRayConnect$ and $\epsilon=1$ if $\rho(\ba) \RayConnect$.
\end{itemize}
\end{defi}

\begin{lem} \label{lem:sphere_vs_projective_space}
We have an equality of sets $\gamma_{n-k,k}(S_\ba)=T_\ba$ for every $\ba\in\mathbb B^{n-k,k}$.
\end{lem}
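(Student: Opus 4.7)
The plan is to reduce the set-theoretic equality $\gamma_{n-k,k}(S_\ba)=T_\ba$ to a coordinatewise verification. Since $\gamma_{n-k,k}$ is a diffeomorphism of $(\mathbb S^2)^m$ with $(\mathbb P^1)^m$, and both $S_\ba$ and $T_\ba$ are cut out by conditions on individual coordinates (pairwise equalities coming from cups and explicit prescriptions on coordinates coming from rays), it suffices to translate each defining relation of $S_\ba$ into the corresponding defining relation of $T_\ba$.

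The central technical input is the identity $\gamma(-x)=\gamma(x)^\perp$ for every $x\in\mathbb S^2$, where $\perp$ denotes the standard Hermitian orthogonal complement on $\mathbb C^2$. To see this, a direct calculation with the stereographic formula shows $\sigma(-x)=-1/\overline{\sigma(x)}$ for $x\neq\pm p$, and one checks that $\theta(\lambda)^\perp=\spn(e-\overline\lambda f)=\theta(-1/\overline\lambda)$ for $\lambda\neq 0$. The boundary case follows from $\gamma(p)=\spn(e)$ and $\gamma(-p)=\theta(0)=\spn(f)$, which are indeed orthogonal. Since $s$ and $t$ are $\mathbb R$-linear involutions of $\mathbb R^3$ and hence commute with negation, the identity extends to $\gamma_{n-k,k}(-x)=\gamma_{n-k,k}(x)^\perp$. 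This instantly matches the cup relations: an undotted cup $i\CupConnect j$ gives $x_i=-x_j$, equivalent to $l_i^\perp=l_j$ (which is (R1)/(R1')), and a dotted cup $i\DCupConnect j$ gives $x_i=x_j$, equivalent to $l_i=l_j$ (which is (R2)/(R2')).

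For the ray relations, I would directly evaluate $\gamma$, $\gamma\circ s$, and $\gamma\circ t$ on the three distinguished points $p=(0,0,1)$, $-p=(0,0,-1)$, and $q=(1,0,0)$. When $k=m$ one has $\gamma_{n-k,k}=\gamma_m$, and the identities $\gamma(p)=\spn(e)$ and $\gamma(-p)=\spn(f)$ immediately give (R4) and (R3), respectively. When $k<m$ one splits according to the parity of $m-k$ (determining whether $\tau=s$ or $\tau=t$) and according to the decoration of the ray at $\rho(\ba)$: if undotted then $x_{\rho(\ba)}=-p$, and if dotted then $x_{\rho(\ba)}=p$. Each subcase reduces to a one-line evaluation of $\gamma\circ\tau$ at $\pm p$, and the appearance of $\mathbf{i}$ in (R4') for the corresponding parity is explained by $s(\pm p)=(0,\pm 1,0)$ having $\sigma$-image $\pm\mathbf{i}$. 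The non-rightmost undotted ray case $x_i=q$ is handled identically via $\gamma\circ\tau(q)$.

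The main bookkeeping challenge is keeping track of the parity of $m-k$ and the decoration data (dotted vs.\ undotted, rightmost vs.\ not), and verifying that the prescribed value of $l_i$ in (R3'), (R4') matches $\gamma\circ\tau$ applied to the prescribed value of $x_i$ in each case. Once the tables of values of $\gamma\circ s$ and $\gamma\circ t$ on $\{p,-p,q\}$ have been established, each verification is essentially mechanical. Finally, since $S_\ba$ is parametrized freely by one choice in $\mathbb S^2$ per cup of $\ba$ (the cup relations determining the remaining coordinate of that cup), and $\gamma_{n-k,k}$ restricts to a homeomorphism on each such factor, the coordinatewise matching of relations yields the desired equality $\gamma_{n-k,k}(S_\ba)=T_\ba$.
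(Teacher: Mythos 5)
Your reduction to a coordinatewise check is the right strategy (the paper itself omits the proof as an easy consequence of the definitions), and your key identity $\gamma(-x)=\gamma(x)^{\perp}$, proved via $\sigma(-x)=-1/\overline{\sigma(x)}$ and $\theta(\lambda)^{\perp}=\spn(e-\overline{\lambda}f)=\theta(-1/\overline{\lambda})$ together with the boundary case $\gamma(p)=\spn(e)$, $\gamma(-p)=\spn(f)$, is correct. Since $s$ and $t$ are linear, this disposes of the cup relations (R1)/(R1') and (R2)/(R2') in all cases, and it also settles the $k=m$ ray relations (R3) and (R4).

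The gap is in the ray cases for $k<m$, which you declare ``essentially mechanical'' without producing the tables of values; if one actually writes them down with the definitions as stated, the verification does not close, and your own bookkeeping contradicts those definitions. Concretely, $\gamma\circ s$ sends $q\mapsto q\mapsto\spn(e+f)$ and $\pm p\mapsto(0,\pm 1,0)\mapsto\spn({\bf i}e\pm f)$, while $\gamma\circ t$ sends $q\mapsto p\mapsto\spn(e)$ and $\pm p\mapsto \pm q\mapsto\spn(e\pm f)$. Since the paper prescribes $s_m$ for $m-k$ even and $t_m$ for $m-k$ odd, the non-rightmost rays (which exist whenever $k<m$) are sent to $\spn(e+f)\neq\spn(e)$ when $m-k$ is even, violating (R3'), and the rightmost ray never picks up the factor ${\bf i}$ when $m-k$ is odd, violating (R4'); so with the literal definitions the equality $\gamma_{n-k,k}(S_\ba)=T_\ba$ fails for every $\ba$ with $k<m$. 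Your remark that the ${\bf i}$ in (R4') ``is explained by $s(\pm p)=(0,\pm 1,0)$'' tacitly pairs $s$ with the $m-k$ odd case, i.e.\ the opposite of the displayed definition of $\gamma_{n-k,k}$ --- a sign that the evaluations were not actually carried out. A complete argument must record these values and then either point out and repair the parity assignment in the definition of $\gamma_{n-k,k}$ (for instance $\gamma_m\circ t_m$ when $m-k$ is even and $\gamma_m\circ t_m\circ s_m$, i.e.\ $(x,y,z)\mapsto(y,z,x)$ in each coordinate, when $m-k$ is odd, which reproduces (R3') and (R4') exactly and is harmless for the rest of the paper, since all later arguments use only $T_\ba$), or concede that the lemma does not hold verbatim; asserting that the check is mechanical skips precisely the step where the subtlety lies.
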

\begin{proof}
Since this follows easily from the definitions we omit the proof.
\end{proof}

\subsection{Topology of the irreducible components}

The goal of this subsection is to prove that the diffeomorphism
\[
\left(\mathbb S^2\right)^m\xrightarrow{\gamma_{n-k,k}} \left(\mathbb P^1\right)^m\xrightarrow{\phi_m^{-1}} Y_m
\]
maps each of the submanifolds $S_\ba\subset\left(\mathbb S^2\right)^m$ onto an irreducible component of the Springer fiber $\mathcal Fl^{n-k,k}_D\subset Y_m$. Moreover, if $k>1$, we check that the composition 
\[
\left(\mathbb S^2\right)^m\xrightarrow{\gamma_{n-k,k}} \left(\mathbb P^1\right)^m\xrightarrow{\phi_m^{-1}} Y_m\twoheadrightarrow Y_{m-1}
\]
where $\pi_m\colon Y_m\to Y_{m-1}$, $(F_1,\ldots,F_m)\mapsto(F_1,\ldots,F_{m-1})$, is the morphism of algebraic varieties which forgets the last vector space of a flag, maps the submanifolds $S_\ba$ onto an irreducible component of $\mathcal Fl^{n-k,k}_C\subset Y_{m-1}$. By Lemma~\ref{lem:sphere_vs_projective_space} it suffices to prove the following 

\begin{prop} \label{prop:preimage_contained}
The preimage $\phi_m^{-1}(T_\ba)\subset Y_m$ is an irreducible component of the (embedded) Springer fiber $\mathcal Fl^{n-k,k}_D\subset Y_m$ for all cup diagrams $\ba \in\mathbb B^{n-k,k}$. Moreover, if $k>1$, $\pi_m\left(\phi_m^{-1}\left(T_\ba\right)\right)\subset Y_{m-1}$ is an irreducible component of the (embedded) Springer fiber $\mathcal Fl^{n-k-1,k-1}_C\subset Y_{m-1}$ for all cup diagrams $\ba \in\mathbb B^{n-k,k}_\mathrm{odd}$.
\end{prop}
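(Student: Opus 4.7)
The plan is to prove Proposition~\ref{prop:preimage_contained} in two stages: first establish the inclusion $\phi_m^{-1}(T_\ba)\subseteq\mathcal Fl^{n-k,k}_D$, then upgrade it to equality with an irreducible component by a dimension count. Inspection of Definition~\ref{defi:topological_P1_components} shows that $T_\ba$ is homeomorphic to $(\mathbb P^1)^{\lfloor k/2\rfloor}$: each of the $\lfloor k/2\rfloor$ cups in $\ba$ contributes one free $\mathbb P^1$-parameter while the rays and the remaining relations eliminate every other degree of freedom. Consequently, $\phi_m^{-1}(T_\ba)$ is a connected closed subset of real dimension $2\lfloor k/2\rfloor$. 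By Proposition~\ref{prop:vL_parametrization_of_components} combined with Lemma~\ref{lem:bijection_tableaux_cups}, the irreducible components of $\mathcal Fl^{n-k,k}_D$ are indexed by $\mathbb B^{n-k,k}$ and all share this dimension; hence once the inclusion is known, connectedness and dimension force $\phi_m^{-1}(T_\ba)$ to coincide with a single irreducible component. The sets $T_\ba$ are pairwise distinct (different cup diagrams impose different defining relations on $(\mathbb P^1)^m$), so the resulting components are distinct as well, and by the count they exhaust all of them.

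The technical heart is the inclusion, which I would prove by induction on the number of undotted cups in $\ba$. The base case treats cup diagrams built solely from dotted cups and rays, which I would attack via a secondary induction on the number of dotted cups. The induction start is a diagram with only rays: then $T_\ba$ is a single point, and the unique flag in $\phi_m^{-1}(T_\ba)$ can be written down explicitly in the basis $(e_i),(f_j)$ and checked by hand to lie in $E_{n-k,k}$ and be $\beta_D^{n-k,k}$-isotropic. Passing from $d$ to $d+1$ dotted cups enlarges $T_\ba$ by one $\mathbb P^1$-parameter; I would describe the enlarged family of flags explicitly through $\phi_m^{-1}$ and invoke Lemma~\ref{z_respects_isotropy_lem} to lift isotropy of the smaller subflag (which appears after applying $z$) to isotropy of the full flag. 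This base case is the technical core and the most delicate part, because dots on cups and rays encode new type-$D$ data absent in type $A$ and interact nontrivially with the hermitian projection $C$ defining $\phi_m$.

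For the outer inductive step, I would locate an innermost undotted cup joining adjacent vertices $i,i+1$ and form a reduced diagram $\ba'$ with one fewer undotted cup by replacing this cup with two rays (possibly dotted, so that $\ba'$ remains a valid cup diagram in the appropriate block), mimicking the type-$A$ argument of~\cite{Rus11}. The induction hypothesis yields isotropy for flags in $\phi_m^{-1}(T_{\ba'})$, and a flag in $\phi_m^{-1}(T_\ba)$ is recovered from such a flag by prescribing $F_i$ as an element of the $\mathbb P^1$-family $\{F\mid F_{i-1}\subsetneq F\subseteq z^{-1}F_{i-1}\}$ compatibly with the cup relation; Lemma~\ref{z_respects_isotropy_lem} then propagates isotropy from the smaller flag to the enlarged flag $F_\bullet$.

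For the second statement I would argue analogously, noting that under $\phi_m$ and $\phi_{m-1}$ the morphism $\pi_m\colon Y_m\to Y_{m-1}$ corresponds to projection $(\mathbb P^1)^m\to(\mathbb P^1)^{m-1}$ onto the first $m-1$ factors. Inspecting Definition~\ref{defi:topological_P1_components} for $\ba\in\mathbb B^{n-k,k}_\mathrm{odd}$, this projection sends $T_\ba$ onto a set $T_{\ba''}$ attached to a cup diagram $\ba''$ on $m-1$ vertices, which is precisely the combinatorial shadow of the bijection $ADT_{D,\mathrm{odd}}^\mathrm{sgn}(n-k,k)\leftrightarrow ADT_C^\mathrm{sgn}(n-k-1,k-1)$ of Lemma~\ref{lem:bijection_tableaux_C_D}. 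The same double induction---now using the type-$C$ clauses of Lemma~\ref{z_respects_isotropy_lem} and the dimension equality for irreducible components of $\mathcal Fl^{n-k-1,k-1}_C$---yields $\pi_m(\phi_m^{-1}(T_\ba))\subseteq\mathcal Fl^{n-k-1,k-1}_C$ and upgrades it to equality with an irreducible component.
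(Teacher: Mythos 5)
Your high-level strategy (inclusion $\phi_m^{-1}(T_\ba)\subseteq\mathcal Fl^{n-k,k}_D$ plus an irreducibility/dimension argument, organized as a double induction on undotted and then dotted cups) matches the paper's plan, and your dimension observation $T_\ba\cong(\mathbb P^1)^{\lfloor k/2\rfloor}$ is correct. The outer induction on undotted cups is also the right idea. However, there are two significant problems.

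First, and most seriously, your treatment of the base case is where the argument breaks. You propose to handle the passage from $d$ to $d+1$ dotted cups by invoking Lemma~\ref{z_respects_isotropy_lem} ``to lift isotropy of the smaller subflag (which appears after applying $z$) to isotropy of the full flag.'' But Lemma~\ref{z_respects_isotropy_lem} needs the hypothesis that $zU$ lies in $E_{n-k-2,k-2}$ and is $\beta_D^{n-k-2,k-2}$-isotropic, and the mechanism by which $zF_m$ is recognized as the top space of a flag in the smaller Springer fiber is the commutative square involving $X_m^i$, $q_m^i$, and $A_m^i=\{l_{i+1}=l_i^\perp\}$. That square only exists for \emph{undotted} cups, because the equality $\phi_m(X_m^i)=A_m^i$ rests on the relation $F_{i+1}=z^{-1}F_{i-1}\,\Leftrightarrow\,l_{i+1}=l_i^\perp$. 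For a dotted cup the imposed relation is $l_i=l_{i+1}$, and there is no analogous subvariety of $Y_m$ nor morphism to $Y_{m-2}$ that transports this to a smaller partition. This is precisely why the paper's base case (Proposition~\ref{prop:preimage_no_undotted_cups}, proven via Lemmas~\ref{lem:tech_lem_1}--\ref{lem:tech_lem_3}) bypasses Lemma~\ref{z_respects_isotropy_lem} entirely and instead propagates explicit ``special simultaneous Jordan systems'' along the flag, tracking orthonormality (for the hermitian form), $\beta_D$- and $\beta_C$-orthogonality, and the compatibility condition $C(e_j^{(i)})=C(z(e_j^{(i)}))$ simultaneously, and then directly computes the Jordan types $J(z_{n-k,k}^{(i)})$ step by step. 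None of this is recoverable from the type-$A$ machinery you gesture at; the paper explicitly flags this as the new, non-type-$A$ content.

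Second, in your outer inductive step you propose to ``replace this cup with two rays,'' keeping $m$ vertices and producing $\ba'\in\mathbb B^{n-k+2,k-2}$, and then apply Lemma~\ref{z_respects_isotropy_lem}. But the lemma's hypothesis concerns $E_{n-k-2,k-2}$, not $E_{n-k+2,k-2}$, so the induction hypothesis obtained from your $\ba'$ is in the wrong shape to feed the lemma. The correct reduction (as in \cite{Rus11} and in the paper) deletes both vertices $i,i+1$, passes to $\ba'\in\mathbb B^{n-k-2,k-2}$ on $m-2$ vertices, and uses $q_m^i(F_1,\ldots,F_m)=(F_1,\ldots,F_{i-1},zF_{i+2},\ldots,zF_m)\in\phi_{m-2}^{-1}(T_{\ba'})$. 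Then one has $zF_m\subseteq E_{n-k-2,k-2}$ isotropic by induction, and Lemma~\ref{z_respects_isotropy_lem} applies as stated. So the spirit of this step is right, but as written the parameters do not line up.
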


In order to prove the above proposition (which will occupy most of the remaining section) we proceed by induction on the number of undotted cups. 

\subsubsection{Proof of Proposition~\ref{prop:preimage_contained}: Base case of the induction}

In the following we prove Proposition~\ref{prop:preimage_contained} for all cup diagrams without undotted cups contained in $\mathbb B^{n-k,k}$. It is useful to distinguish two different cases:

\begin{enumerate}
\item If $k$ is odd, $\mathbb B^{n-k,k}$ contains precisely two such diagrams, namely
\[
\begin{tikzpicture}[thick,baseline={(0,-.25)},scale=1]
\begin{scope}[xshift=0cm]
\node at (.5,-.4) {\ldots};
\draw (0,0) -- +(0,-.75);
\draw (1,0) -- +(0,-.75);
\draw (1.5,0) -- +(0,-.75);
\draw (2,0) .. controls +(0,-.5) and +(0,-.5) .. +(.5,0);
\fill (2.25,-.4) circle(3pt);
\draw (3,0) .. controls +(0,-.5) and +(0,-.5) .. +(.5,0);
\fill (3.25,-.4) circle(3pt);
\node at (4,-.4) {\ldots};
\draw (4.5,0) .. controls +(0,-.5) and +(0,-.5) .. +(.5,0);
\fill (4.75,-.4) circle(3pt);
\end{scope}
\end{tikzpicture}
\hspace{2.6em}
\text{and}
\hspace{2.6em}
\begin{tikzpicture}[thick,baseline={(0,-.25)},scale=1]
\begin{scope}[xshift=0cm]
\node at (.5,-.4) {\ldots};
\draw (0,0) -- +(0,-.75);
\draw (1,0) -- +(0,-.75);
\draw (1.5,0) -- +(0,-.75);
\fill (1.5,-.4) circle(3pt);
\draw (2,0) .. controls +(0,-.5) and +(0,-.5) .. +(.5,0);
\fill (2.25,-.4) circle(3pt);
\draw (3,0) .. controls +(0,-.5) and +(0,-.5) .. +(.5,0);
\fill (3.25,-.4) circle(3pt);
\node at (4,-.4) {\ldots};
\draw (4.5,0) .. controls +(0,-.5) and +(0,-.5) .. +(.5,0);
\fill (4.75,-.4) circle(3pt);
\end{scope}
\end{tikzpicture}
\]
which consist of $m-k+1$ rays followed by $\frac{k-1}{2}$ dotted cups placed side by side. 
\item If $k$ is even (which implies $k=m$), there is precisely one such cup diagram in $\mathbb B^{k,k}$ (namely the one which consists of $\frac{k}{2}$ successive cups). 
\end{enumerate}
The following lemma treats the extremal case in which the cup diagram consists of rays only. 

\begin{lem} \label{lem:n-1_1_case}
The preimage $\phi_m^{-1}(T_\ba)\subset Y_m$ is an irreducible component of the (embedded) Springer fiber $\mathcal Fl^{n-1,1}_D\subset Y_m$ for all cup diagrams $\ba \in\mathbb B^{n-1,1}$. 
\end{lem}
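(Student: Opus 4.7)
The plan is to verify the lemma by direct computation, exploiting that for $k=1$ the partition $(n-1,1)$ is small enough that both $T_\ba$ and $\mathcal Fl^{n-1,1}_D$ reduce to finite sets of points. First I would observe that $\mathbb B^{n-1,1}$ consists of precisely the two diagrams displayed above, specialized to $k=1$: $m$ undotted rays, or $m-1$ undotted rays followed by a single dotted ray at position $m$. Inspecting Definition~\ref{defi:topological_P1_components}, in either case $T_\ba$ is the single point $(\spn(e),\ldots,\spn(e),l_m)\in(\mathbb P^1)^m$, with $l_m$ one of two explicit lines pinned down by the dot and by the parity of $m-1$. (The degenerate subcase $m=k=1$ is handled identically using the unprimed relations, giving $T_\ba=\{\spn(e)\}$ or $\{\spn(f)\}$.)

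Next I would invert $\phi_m$ explicitly on $T_\ba$. The constraints $C(F_i\cap F_{i-1}^\perp)=\spn(e)$ for $i<m$, combined with the Springer relations $zF_i\subseteq F_{i-1}$ and the unitary isomorphism of Lemma~\ref{lem:lin_alg_lem}, force inductively $F_i=\spn(e_1,\ldots,e_i)$. Then $F_m\subseteq z^{-1}F_{m-1}=\spn(e_1,\ldots,e_m,f_1)$, so necessarily $F_m=\spn(e_1,\ldots,e_{m-1},\alpha e_m+\beta f_1)$ with $(\alpha:\beta)\in\mathbb P^1$ uniquely determined by $l_m$. The inclusion $F_m\subseteq E_{n-1,1}$ is automatic, and a short direct computation using the definition of $\beta_D^{n-1,1}$ gives
\[
\beta_D^{n-1,1}(\alpha e_m+\beta f_1,\alpha e_m+\beta f_1)=(-1)^{m-1}\alpha^2-\beta^2;
\]
one then checks case by case (depending on the parity of $m$ and on whether the ray is dotted) that this vanishes for the specific $(\alpha:\beta)$ coming from each of the two possible $l_m$. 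Hence $\phi_m^{-1}(T_\ba)$ is a single flag of $\mathcal Fl^{n-1,1}_D$.

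To promote this to the statement that $\phi_m^{-1}(T_\ba)$ is an irreducible component, I would repeat the same inductive analysis on an arbitrary flag in $\mathcal Fl^{n-1,1}_D$: isotropy with respect to $\beta_D^{n-1,1}$ alone already forces $F_i=\spn(e_1,\ldots,e_i)$ for all $i<m$, and the only remaining freedom is the same quadratic $(-1)^{m-1}\alpha^2-\beta^2=0$, which has exactly two projective solutions. Thus $\mathcal Fl^{n-1,1}_D$ is itself a two-point variety. This is consistent with Proposition~\ref{prop:vL_parametrization_of_components}, which counts two signed admissible domino tableaux of shape $(n-1,1)$ of type~$D$ (the underlying domino tableau is unique, with a $\pm$ sign on its single vertical domino in column~$1$). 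Since the two cup diagrams manifestly produce two distinct flags, the assignment $\ba\mapsto\phi_m^{-1}(T_\ba)$ is a bijection onto the two points of $\mathcal Fl^{n-1,1}_D$, each of which is automatically an irreducible component.

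The only real obstacle is the quadratic isotropy check in the second paragraph; everything else is bookkeeping. Note that the Type~$C$ portion of Proposition~\ref{prop:preimage_contained} is vacuous in this base case since that statement assumes $k>1$.
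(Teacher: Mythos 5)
Your proof is correct and follows essentially the same route as the paper: invert $\phi_m$ column by column using Lemma~\ref{lem:lin_alg_lem} to force $F_i=\spn(e_1,\ldots,e_i)$ for $i<m$, pin down $F_m$ from $l_m$, and verify isotropy by a direct computation with $\beta_D^{n-1,1}$. The one place you add something is the last step: the paper, having shown $\phi_m^{-1}(T_\ba)$ is a single isotropic flag, immediately concludes it is an irreducible component (relying implicitly on the fact that $\mathcal Fl^{n-1,1}_D$ is zero-dimensional with two points), whereas you make this explicit by classifying $\mathcal Fl^{n-1,1}_D$ directly from the isotropy and flag conditions and observing the two cup diagrams hit both points. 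That extra bit of bookkeeping is harmless and closes a small gap in the paper's wording; your unified quadratic $(-1)^{m-1}\alpha^2-\beta^2=0$ also packages the paper's separate $m$-odd/$m$-even Gram-matrix checks cleanly. One nit: you write that ``isotropy alone'' forces $F_i=\spn(e_1,\ldots,e_i)$, but of course you are also using $zF_i\subseteq F_{i-1}$ to confine $F_i$ to $\spn(e_1,\ldots,e_i,f_1)$ before isotropy kills the $f_1$-coefficient; your preceding sentences make clear you intend this, so it is just a phrasing slip.
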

\begin{proof}
We distinguish between three different cases:\medskip 

Let $m=1$ and let $\ba\in\mathbb B^{1,1}$ be a cup diagram. By Definition \ref{defi:topological_P1_components} we have $T_\ba=\{\spn(f)\}$ (resp.\ $T_\ba=\{\spn(e)\}$) if the ray is undotted (resp.\ dotted). The preimage of $T_\ba$ under the diffeomorphism
\[
\phi_1\colon \begin{Bmatrix}
 {\textrm{one-dimensional subspaces}} \\
 {F_1\subset\mathbb C^{2N}\textrm{ contained in }\ker(z)}
\end{Bmatrix}\xrightarrow\cong \mathbb P^1\,,\,\,F_1\mapsto C(F_1)
\]
is $\phi_1^{-1}(T_\ba)=\{\spn(f_1)\}$ (resp.\ $\phi_1^{-1}(T_\bb)=\{\spn(e_1)\}$) because $\phi_1(\spn(f_1))=C(\spn(f_1))=\spn(f)$ and $C(\spn(e_1))=\spn(e)$. Note that $\spn(f_1),\spn(e_1)\subset E_{1,1}$ are clearly isotropic with respect to $\beta_{1,1}$. Hence, $\phi_1^{-1}(T_\ba)$ is an irreducible component of the (embedded) Springer fiber $\mathcal Fl^{1,1}_D$. \medskip

Assume that $m>1$ is odd and fix a cup diagram $\ba\in\mathbb B^{n-1,1}$. Then $T_\ba$ consists of a single element $(\spn(e),\ldots,\spn(e),\spn(e+(-1)^\epsilon f))\in\left(\mathbb P^1\right)^m$, where $\epsilon=0$ if the rightmost ray is dotted and $\epsilon=1$ if it is undotted. Let $(F_1,\ldots,F_m)\in Y_m$ be the preimage of this element under the diffeomorphism $\phi_m$. Since $F_1\subset\ker(z)$ we have $F_1=\spn(\lambda e_1+\mu f_1)$ for some $\lambda,\mu\in\mathbb C$, $\lambda\neq 0$ or $\mu\neq 0$. The condition $C(F_1)=\spn(e)$ together with 
\[
C(F_1)=C\left(\spn(\lambda e_1+\mu f_1)\right)=\spn(\lambda e+\mu f)
\]
implies $\mu=0$ and hence $F_1=\spn(e_1)$. By induction we assume that we have already shown 
\[
F_1=\spn(e_1),\ldots,F_j=\spn(e_1,\ldots,e_j),\ldots,F_i=\spn(e_1,\ldots,e_i),
\] 
for some $i\in\{1,\ldots,m-1\}$. If $i<m-1$ we are looking for $F_{i+1}$ such that $zF_{i+1}\subset F_i$ and 
\[
\spn(e)=C(F_{i+1}\cap F_i^\perp)=C\left(F_{i+1}\cap\spn(e_{i+1},\ldots,e_{2m-1},f_1)\right).
\] 
By Lemma \ref{lem:lin_alg_lem} this subspace is unique. Since $\spn(e_1,\ldots,e_i,e_{i+1})$ satisfies these properties we deduce $F_{i+1}=\spn(e_1,\ldots,e_i,e_{i+1})$. If $i=m-1$ we replace condition $\spn(e)=C(F_{i+1}\cap F_i^\perp)$ by $\spn(e+(-1)^\epsilon f)=C(F_{i+1}\cap F_i^\perp)$. Note that $\spn(e_1,\ldots,e_{m-1},e_m+(-1)^\epsilon f_1)$ is a possible choice and hence we deduce that this is $F_m$.

In order to check that $F_1,\ldots,F_m$ are isotropic with respect to $\beta^{n-1,1}_D$ we note that the Gram matrix of $\beta^{n-1,1}_D$ restricted to the span of the vectors $e_1,\ldots,e_m,f_1$ is given by
\begin{equation}\label{eq:gram_matrix_ex}
\left(\begin{tabular}{cccc|c}
0&&&0& \\
&&&& \\
&&&0& \\
0&$\dots$&0&1& \\ \hline
&&&&-1
\end{tabular}\right)
\end{equation}
Thus, the vectors $e_1,\ldots,e_{m-1}$ are pairwise orthogonal which shows that $F_1,\ldots,F_{m-1}$ are isotropic. Since $e_m+(-1)^\epsilon f_1$ is clearly orthogonal to $e_1,\ldots,e_{m-1}$ it suffices to compute
\[
\beta^{n-1,1}_D(e_m+(-1)^\epsilon f_1,e_m+(-1)^\epsilon f_1)=\beta^{n-1,1}_D(e_m,e_m)+\beta^{n-1,1}_D(f_1,f_1)=1+(-1)=0
\]
which shows that $F_m$ is isotropic, too.\medskip

If $m$ is even we have $T_\ba=\{(\spn(e),\ldots,\spn(e),\spn({\bf i}e+(-1)^\epsilon f))\}$ and obtain
\[
F_1=\spn(e_1),\ldots,F_{m-1}=\spn(e_1,\ldots,e_{m-1}),F_m=\spn(e_1,\ldots,e_{m-1},{\bf i}e_m+(-1)^\epsilon f_1)
\]
by arguing similarly as in the case in which $m$ is odd. Note that the two non-zero entries in the Gram matrix (\ref{eq:gram_matrix_ex}) are both $-1$ if $m$ is even. Hence, the additional factor of ${\bf i}$ in front of $e_m$ guarantees that $F_m$ is again isotropic.
\end{proof}










\begin{prop}[Special Case of Proposition \ref{prop:preimage_contained}] \label{prop:preimage_no_undotted_cups}
Let $\ba \in \mathbb B^{n-k,k}$ be a cup diagram without undotted cups, $k>1$. 
\begin{enumerate}
\item The preimage $\phi_m^{-1}(T_\ba)\subset Y_m$ is an irreducible component of $\mathcal Fl^{n-k,k}_D\subset Y_m$ contained in the closure of $\left(S^{z_{n-k,k}}_D\right)^{-1}(T)$, where $T\in ADT_D(n-k,k)$ is the $D$-admissible domino tableau obtained by forgetting the signs of $\Psi^{-1}(\ba)$, i.e.\ the domino diagram   
\[
\begin{tikzpicture}[baseline={(0,.25)},scale=0.7]
\draw (0,0)  -- +(0,1);
\draw (.5,0)  -- +(0,1);
\draw (0,0)  -- +(1,0);
\draw (0,1)  -- +(1,0);
\draw (1,0)  -- +(0,1);

\node at (1.5,.5) {$\ldots$};

\draw (2,0)  -- +(0,1);
\draw (2,0)  -- +(.5,0);
\draw (2,1)  -- +(1.5,0);
\draw (2.5,0)  -- +(0,1);
\draw (2.5,.5) -- +(1,0);
\draw (3.5,.5)  -- +(0,.5);

\node at (4,.75) {$\ldots$};

\draw (4.5,.5)  -- +(0,.5);
\draw (5.5,.5)  -- +(0,.5);
\draw (4.5,1)  -- +(1,0);
\draw (4.5,.5)  -- +(1,0);
\end{tikzpicture}
\]
consisting of $k$ vertical dominoes followed by $m-k$ successive horizontal dominoes in the first row together with the unique filling such that (ADT1)-(ADT3) are satisfied.
\item Moreover, $\pi_m(\phi_m^{-1}(T_\ba))\subset Y_{m-1}$ is an irreducible component of $\mathcal Fl^{n-k-1,k-1}_C\subset Y_{m-1}$ contained in the closure of $\left(S^{z_{n-k-k,k-1}}_C\right)^{-1}(T')$, where $T'\in ADT_C(n-k-1,k-1)$ is obtained from $T$ by deleting the leftmost vertical domino.
\end{enumerate}
\end{prop}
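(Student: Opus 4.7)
I will proceed by induction on the number $q = \lfloor k/2 \rfloor$ of dotted cups in $\ba$. The base case $q = 0$ forces $k = 1$: part~(1) then reduces to Lemma~\ref{lem:n-1_1_case} and part~(2) is vacuous because it requires $k > 1$. For the inductive step I fix $\ba \in \mathbb B^{n-k,k}$ with $q \geq 1$ dotted cups (and no undotted cups) and let $\ba'$ be the cup diagram on $m-2$ vertices obtained by deleting the rightmost dotted cup, which sits on vertices $m-1$ and $m$. Then $\ba' \in \mathbb B^{n-k-2, k-2}$ has $q-1$ dotted cups, so the inductive hypothesis applies. The key structural observation is that the dotted cup constraint $l_{m-1} = l_m$ turns the projection $T_\ba \to T_{\ba'}$ (forgetting the last two coordinates) into a trivial $\mathbb P^1$-bundle; the commutativity $\phi_{m-2} \circ (\pi_{m-1} \circ \pi_m) = \mathrm{proj}_{1,\ldots,m-2} \circ \phi_m$ then lifts this to a $\mathbb P^1$-bundle $\phi_m^{-1}(T_\ba) \to \phi_{m-2}^{-1}(T_{\ba'})$, which immediately yields irreducibility of $\phi_m^{-1}(T_\ba)$ and the expected dimension $q$ of a top-dimensional irreducible subvariety of $\mathcal Fl^{n-k,k}_D$.

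For the isotropy assertion in part~(1), the inductive hypothesis furnishes $F_{m-2} \subseteq E_{n-k-2, k-2}$ isotropic with respect to $\beta^{n-k-2, k-2}_D$. A single application of Lemma~\ref{z_respects_isotropy_lem} with $U = F_{m-1}$ yields $F_{m-1} \subseteq E_{n-k, k}$ isotropic with respect to $\beta^{n-k, k}_D$, since $zF_{m-1} \subseteq F_{m-2}$. The main obstacle, and the reason this proposition is described as the technical heart of the paper, appears at the very next step: to establish isotropy of $F_m$, a second application of Lemma~\ref{z_respects_isotropy_lem} would demand $zF_m \subseteq E_{n-k-2, k-2}$, whereas the inclusion $zF_m \subseteq F_{m-1}$ only places $zF_m$ in $E_{n-k, k}$. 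No symbolic reiteration of the lemma can bridge this gap. My plan is to exploit the dotted cup condition $l_{m-1} = l_m$ explicitly. Using Lemma~\ref{lem:lin_alg_lem} I will normalize a generator $w_m$ of $F_m \cap F_{m-1}^\perp$ with $C(w_m) = l_m$ and a generator $w_{m-1}$ of $F_{m-1} \cap F_{m-2}^\perp$ with $C(w_{m-1}) = l_{m-1}$ so that $C(w_m) = C(w_{m-1})$ as vectors in $\mathbb C^2$; the equality $l_m = l_{m-1}$, combined with the definition of $\phi_m$, then forces the identity $zw_m = w_{m-1}$ inside $z^{-1}F_{m-2} \cap F_{m-2}^\perp$. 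With this explicit relation at hand, isotropy of $F_m$ can be verified by a direct computation using the defining formulas of $\beta^{n-k,k}_D$ and the already-established isotropy of $F_{m-1}$.

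Part~(2) will be handled by a parallel argument, replacing Lemma~\ref{z_respects_isotropy_lem} by its type~$C$ half and using the inductive statement for part~(2) of $\ba'$ to provide isotropy of the relevant intermediate subspace with respect to $\beta^{n-k-3, k-3}_C$; the same dotted cup trick resolves the analogous gap. Finally, the assertion that $\phi_m^{-1}(T_\ba)$ is contained in the closure of $(S^{z_{n-k,k}}_D)^{-1}(T)$ for the tableau $T$ described in the proposition follows from the compatibility of the $\mathbb P^1$-bundle construction with the cluster decomposition underlying the bijection $\Psi$ of Section~\ref{subsec:combinatorial_bijections}---the rightmost closed cluster of $T$ corresponds exactly to the dotted cup we deleted---together with the inductive statement applied to the tableau $T'$ obtained by removing the domino labelled $q$ from $T$.
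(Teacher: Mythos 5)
Your proposal correctly isolates the central obstacle: after one application of Lemma~\ref{z_respects_isotropy_lem} the isotropy of $F_{m-1}$ follows, but the lemma cannot be iterated to reach $F_m$ because one only knows $zF_m\subseteq F_{m-1}\subseteq E_{n-k,k}$, not $zF_m\subseteq E_{n-k-2,k-2}$. That diagnosis matches the paper. However, the step you offer as the cure is precisely where the argument has a gap. You assert that $l_m=l_{m-1}$ ``forces the identity $zw_m=w_{m-1}$'' after suitable normalization, but this does not follow from the definitions. The map $C$ satisfies $C(zv)=C(v)-a_1e-b_1f$, where $a_1,b_1$ are the coordinates of $v$ on $e_1,f_1$, so $C(zw_m)$ and $C(w_m)$ differ unless $w_m\perp\ker(z)$, a condition that is not automatic. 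Moreover, $w_{m-1}$ is characterized by Lemma~\ref{lem:lin_alg_lem} inside $z^{-1}F_{m-2}\cap F_{m-2}^\perp$, while $w_m$ lives in $z^{-1}F_{m-1}\cap F_{m-1}^\perp$; there is no a priori reason $zw_m$ should land in $F_{m-2}^\perp$, and without that you cannot even compare $zw_m$ with $w_{m-1}$ via the unitary isomorphism $C$. In short, the identity $zw_m\in\operatorname{span}(w_{m-1})$ is true along $T_\ba$, but establishing it is not a consequence of the definition of $\phi_m$ plus the constraint $l_{m-1}=l_m$; it has to be proved, and that proof is exactly what is missing.

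The paper's own proof builds a nontrivial device to close this gap: the \emph{special simultaneous Jordan system} of Definition~\ref{defi:Jordan_system}. Conditions (SJS1)--(SJS3) simultaneously record the Gram matrix of $\beta^{n-k,k}_D$ and $\beta^{n-k-1,k-1}_C$, hermitian orthonormality, and the key compatibility $C(e_j^{(i)})=C(z(e_j^{(i)}))$ which is precisely the ``$C\circ z=C$ on the relevant vectors'' condition your argument needs. Lemma~\ref{lem:tech_lem_1} constructs such a system after the rays, Lemma~\ref{lem:tech_lem_2} propagates it through the dotted cups from left to right, and isotropy, Jordan types, and the tableau $T$ drop out of (SJS1)--(SJS3) at each step. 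Your induction runs in the opposite direction (stripping the rightmost dotted cup) and gains dimension and irreducibility cheaply from the $\mathbb P^1$-bundle structure, which is a nice observation the paper does not use. But the inductive hypothesis you would need to pass to $\ba'$ is not merely isotropy of $F_{m-2}$: you would need the existence of a special Jordan system at level $m-2$, or some equivalent quantitative control, and you would also need to track the Jordan types of $z_{n-k,k}^{(i)}$ at each $i$ to identify $T$, which your last paragraph handles only by gesture. There is also a minor bookkeeping issue: for $k=3$ the cup diagram $\ba'$ has $k'=1$, so the part~(2) hypothesis is vacuous and you would have to re-establish part~(2) for $k=3$ directly rather than by induction.
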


\begin{notation}
Let $U\subset E_{n-k-1,k-1}$ be a subspace. In the following we write $U^{\perp_D}$ (resp.\ $U^{\perp_C}$) to denote the orthogonal complement of $U$ with respect to $\beta^{n-k,k}_D$ (resp.\ $\beta^{n-k-1,k-1}_C$). We write $U^\perp$ to denote the orthogonal complement of $U$ in $\mathbb C^{2N}$ with respect to the hermitian structure of $\mathbb C^{2N}$.
\end{notation}

For the proofs of Lemma~\ref{lem:tech_lem_1} and Lemma~\ref{lem:tech_lem_2} it is useful to introduce a technical definition.

\begin{defi} \label{defi:Jordan_system}
Let $(F_1,\ldots,F_m)\in Y_m$ be a flag such that $F_i\subset E_{n-k-1,k-1}$ and $F_i$ is isotropic with respect to both $\beta^{n-k,k}_D$ and $\beta^{n-k-1,k-1}_C$. Moreover, assume that we have Jordan types 
\[
J(z_{n-k,k}^{(i)})=(k-i,k-i) \hspace{2em} J(z_{n-k-1,k-1}^{(i)})=(k-i-1,k-i-1),
\]
where $z_{n-k,k}^{(i)}$ is the endomorphism of $F_i^{\perp_D}/F_i$ induced by $z_{n-k,k}$ and $z_{n-k-1,k-1}^{(i)}$ is the endomorphism of $F_i^{\perp_C}/F_i$ induced by $z_{n-k-1,k-1}$.

A collection of linearly independent vectors in $\mathbb C^{2N}$
\begin{eqnarray} \label{basis_U_induction}
\begin{array}{cc}
&e_1^{(i)},\hspace{.5em} e_2^{(i)}, \hspace{.5em}\ldots\hspace{.5em}, \hspace{.5em}e_{k-i-1}^{(i)}, \hspace{.5em}e_{k-i}^{(i)} \\
&f_1^{(i)},\hspace{.5em} f_2^{(i)}, \hspace{.5em}\ldots\hspace{.5em}, \hspace{.5em}f_{k-i-1}^{(i)}, \hspace{.5em}f_{k-i}^{(i)}
\end{array}
\end{eqnarray}
where $z$ maps each vector to its left neighbor (the leftmost vectors in each row are sent to $F_i$) is called a {\it simultaneous Jordan system for $z_{n-k,k}^{(i)}$ and $z_{n-k-1,k-1}^{(i)}$} if their residue classes (modulo $F_i$) form a Jordan basis of $z_{n-k,k}^{(i)}$ and $z_{n-k-1,k-1}^{(i)}$ (the latter after excluding $e_{k-i}^{(i)}$ and $f_{k-i}^{(i)}$). 

A simultaneous Jordan system as above is called {\it special} if the following additional properties hold:
\begin{enumerate}[{(\text{SJS}}1{)}]

\item \label{item:SJS1} The restriction of $\beta^{n-k,k}_D$ to the simultaneous Jordan system (\ref{basis_U_induction}) is given by the formulae
\[
\beta^{n-k,k}_D\left(f_j^{(i)},e_{j'}^{(i)}\right)=(-1)^{j-1}\delta_{j+j',k-i+1}
\]
for all $j,j'\in\{1,\ldots,k-i\}$. Moreover, the restriction of $\beta^{n-k-1,k-1}_C$ is given by 
\[
-\beta^{n-k-1,k-1}_C\left(e_{j'}^{(i)},f_j^{(i)}\right)=\beta^{n-k-1,k-1}_C\left(f_j^{(i)},e_{j'}^{(i)}\right)=(-1)^{j-1}\delta_{j+j',m-i}
\]
for all $j,j'\in\{1,\ldots,k-i-1\}$.
\item \label{item:SJS2} The vectors in (\ref{basis_U_induction}) form an orthonormal system with respect to the hermitian structure on $\mathbb C^{2N}$ and they are all contained in $F_i^{\perp}$.
\item \label{item:SJS3} We have equalities $C(e_j^{(i)})=C(z(e_j^{(i)}))$ and $C(f_j^{(i)})=C(z(f_j^{(i)}))$ for all $j\in\{2,\ldots,k-i\}$.
\end{enumerate}
\end{defi}


\begin{lem} \label{lem:tech_lem_1}
Assume that $k>1$ is odd and let $\ba\in\mathbb B^{n-k,k}$ be a cup diagram without undotted cups and $(F_1,\ldots,F_m)\in\phi_m^{-1}(T_\ba)$.

Then $F_1,\ldots,F_{m-k+1}$ are contained in $E_{n-k-1,k-1}$ and isotropic with respect to $\beta^{n-k,k}_D$ as well as $\beta^{n-k-1,k-1}_C$, and we have 
\[
J(z_{n-k,k}^{(i)})=(n-k-2i,k), \hspace{2em} J(z_{n-k-1,k-1}^{(i)})=(n-k-2i-1,k-1), 
\]
for all $i\in\{1,\ldots,m-k\}$, and 
\[
J(z_{n-k,k}^{(m-k+1)})=(k-1,k-1), \hspace{2em} J(z_{n-k-1,k-1}^{(m-k+1)})=(k-2,k-2).
\]
Furthermore, there exists a simultaneous Jordan system for $z_{n-k,k}^{(m-k+1)}$ and $z_{n-k-1,k-1}^{(m-k+1)}$ which is special (in the sense of Definition \ref{defi:Jordan_system}).
\end{lem}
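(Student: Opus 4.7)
Since $\ba\in\mathbb B^{n-k,k}$ has no undotted cups and $k$ is odd, $\ba$ consists of $m-k+1$ rays at the leftmost vertices (only the rightmost of which may carry a dot) followed by $\tfrac{k-1}{2}$ dotted cups. My plan is to determine $F_1,\ldots,F_{m-k+1}$ explicitly and then read off all claims from this description. For $j=1,\ldots,m-k$, vertex $j$ is joined to an undotted non-rightmost ray, so (R3') gives $\gamma(x_j)=\spn(e)$, and iterating Lemma~\ref{lem:lin_alg_lem} as in the proof of Lemma~\ref{lem:n-1_1_case} forces $F_j=\spn(e_1,\ldots,e_j)$. For $F_{m-k+1}$, the rightmost-ray relation (R4') together with $z^{-1}F_{m-k}\cap F_{m-k}^\perp=\spn(e_{m-k+1},f_1)$ and Lemma~\ref{lem:lin_alg_lem} pins down $F_{m-k+1}\cap F_{m-k}^\perp=\spn(v_\epsilon)$ with
\[
v_\epsilon=\begin{cases} e_{m-k+1}+(-1)^\epsilon f_1 & \text{if $m-k$ is even},\\ {\bf i}\,e_{m-k+1}+(-1)^\epsilon f_1 & \text{if $m-k$ is odd},\end{cases}
\]
so $F_{m-k+1}=\spn(e_1,\ldots,e_{m-k},v_\epsilon)$; the $k=m$ case is handled analogously, using (R3)--(R4) to determine $F_1$ directly. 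Containment in $E_{n-k-1,k-1}$ is immediate once $m-k+1\leq n-k-1$ is noted (which uses $k\geq 3$), and isotropy with respect to both $\beta^{n-k,k}_D$ and $\beta^{n-k-1,k-1}_C$ is a direct calculation from the explicit formulas: the pairings $\beta(e_i,e_j)$ vanish when $i+j<n-k+1$, and for $v_\epsilon$ the only possibly nonvanishing self-pairings $\beta(e_{m-k+1},e_{m-k+1})$, $\beta(e_{m-k+1},f_1)$, $\beta(f_1,f_1)$ are all zero when $k\geq 3$.

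Next I compute the Jordan types. For $i\leq m-k$, one has $F_i^{\perp_D}=\spn(e_1,\ldots,e_{n-k-i},f_1,\ldots,f_k)$ inside $E_{n-k,k}$, and the induced $z$-action on $F_i^{\perp_D}/F_i$ decomposes into two independent Jordan chains of lengths $n-k-2i$ (through the $e$-vectors) and $k$ (through the $f$-vectors); the parallel computation inside $E_{n-k-1,k-1}$ with respect to $\beta^{n-k-1,k-1}_C$ gives chains of lengths $n-k-2i-1$ and $k-1$. At the critical step $i=m-k+1$, the vector $v_\epsilon\in F_{m-k+1}$ glues the top of the $e$-chain to the bottom of the $f$-chain in the quotient $F_{m-k+1}^{\perp_D}/F_{m-k+1}$, collapsing the Jordan type to $(k-1,k-1)$ for $D$ and, by the same argument applied to $F_{m-k+1}^{\perp_C}/F_{m-k+1}$, to $(k-2,k-2)$ for $C$.

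Finally, to produce the special simultaneous Jordan system at level $i=m-k+1$, I take the natural choices $e_j^{(m-k+1)}:=e_{m-k+1+j}$ and $f_j^{(m-k+1)}:=f_{j+1}$ for $j=2,\ldots,k-1$, together with appropriately normalized head vectors $e_1^{(m-k+1)},f_1^{(m-k+1)}$ arranged so that their $z$-images lie in $F_{m-k+1}$ and match the coupling imposed by $v_\epsilon$. Then (SJS2) (hermitian orthonormality and orthogonality to $F_{m-k+1}$) and (SJS3) ($C$-equivariance along each chain) will be immediate from the construction, while (SJS1) reduces to direct evaluation of the formulas for $\beta^{n-k,k}_D$ and $\beta^{n-k-1,k-1}_C$ on this basis. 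The hard part will be this last construction: the two head vectors must simultaneously satisfy (a) their $z$-images lie in $F_{m-k+1}$, (b) they are hermitian-orthogonal to $F_{m-k+1}$, and (c) the bilinear-form pairings $\beta^{n-k,k}_D(f_1^{(m-k+1)},e_{k-1}^{(m-k+1)})=1$ and $\beta^{n-k-1,k-1}_C(f_1^{(m-k+1)},e_{k-2}^{(m-k+1)})=1$ prescribed in (SJS1) come out exactly. Constraints (a)--(c) will pin down the head vectors up to sign, and the case split on the parity of $m-k$ and on $\epsilon$ must be bookkept consistently with the shape of $v_\epsilon$, which couples the $e$- and $f$-directions; this bookkeeping is the main technical obstacle.
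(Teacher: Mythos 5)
Your determination of $F_1,\ldots,F_{m-k+1}$ and the Jordan-type computation match the paper's proof. The gap is in the proposed special simultaneous Jordan system, and it is not a bookkeeping issue but a structural one. Suppose $k<m$. First, the Jordan-chain condition in Definition~\ref{defi:Jordan_system} is not a free choice of $e_1^{(m-k+1)}$: once you set $e_j^{(m-k+1)}=e_{m-k+1+j}$ for $j\geq 2$, the relation $z(e_2^{(m-k+1)})=e_1^{(m-k+1)}$ forces $e_1^{(m-k+1)}=e_{m-k+2}$, and then $z(e_1^{(m-k+1)})=e_{m-k+1}\notin F_{m-k+1}$ (the space $F_{m-k+1}$ contains the mixed vector $v_\epsilon=e_{m-k+1}+(-1)^\epsilon f_1$ but not $e_{m-k+1}$ itself, since $f_1\notin F_{m-k+1}$). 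There are no ``head vectors'' to normalize independently; the head is determined by the rest of the chain, and a pure $e$-chain is incompatible with the requirement $z(e_1^{(i)})\in F_i$.

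Second, and more fundamentally, (SJS\ref{item:SJS1}) demands that $\beta^{n-k,k}_D\bigl(f_j^{(m-k+1)},e_{j'}^{(m-k+1)}\bigr)$ be nonzero along the anti-diagonal. With your pure choices $e_{j'}^{(m-k+1)}=e_{m-k+1+j'}$ and $f_j^{(m-k+1)}=f_{j+1}$ this pairing is \emph{identically zero}, because for $k<m$ the form $\beta^{n-k,k}_D$ is defined with $\beta^{n-k,k}_D(e_i,f_j)=0$ for all $i,j$. So (SJS\ref{item:SJS1}) cannot be salvaged by adjusting only $e_1^{(m-k+1)}$ and $f_1^{(m-k+1)}$. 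The correct construction (which is what the paper uses) mixes $e$- and $f$-directions in \emph{every} vector of both chains, e.g.\ for $m-k$ even
\[
e_j^{(m-k+1)}:=\tfrac{1}{\sqrt 2}\bigl(e_{m-k+1+j}+(-1)^\epsilon f_{j+1}\bigr),\qquad
f_j^{(m-k+1)}:=\tfrac{1}{\sqrt 2}\bigl(e_{m-k+j}-(-1)^\epsilon f_j\bigr),
\]
$j\in\{1,\ldots,k-1\}$, with an extra factor of $\mathbf i$ when $m-k$ is odd. Then $z(e_1^{(m-k+1)})=\tfrac{1}{\sqrt 2}v_\epsilon\in F_{m-k+1}$ and $z(f_1^{(m-k+1)})=\tfrac{1}{\sqrt 2}e_{m-k}\in F_{m-k+1}$, the hermitian orthonormality of (SJS\ref{item:SJS2}) holds because the two combinations are complementary, (SJS\ref{item:SJS3}) follows since adding a constant shift in index does not change $C$, and the anti-diagonal pairing of (SJS\ref{item:SJS1}) now comes from the $\beta^{n-k,k}_D(e_\cdot,e_\cdot)$ and $\beta^{n-k,k}_D(f_\cdot,f_\cdot)$ cross-terms, which your pure basis discards. (In the boundary case $k=m$ the form $\beta^{k,k}_D$ pairs $e$'s with $f$'s and a pure Jordan system does work; the failure of your proposal is specific to $k<m$.)
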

\begin{proof}
Let $(F_1,\ldots,F_m) \in\phi_m^{-1}(T_\ba)$, i.e.\ there exists $(l_1,\ldots,l_m)\in T_\ba$ such that 
\begin{equation} \label{eq:equations_to_solve}
C(F_1)=l_1,\,C(F_2\cap F_1^\perp)=l_2,\,\ldots\,,\,C(F_m\cap F_{m-1}^\perp)=l_m. 
\end{equation}
As in the proof of Lemma \ref{lem:n-1_1_case} we distinguish between three different cases:\medskip

If $k=m$ we have $l_1=\spn(f)$ if $\ba$ has an undotted ray and $l_1=\spn(e)$ if $\ba$ has a dotted ray. It follows directly from (\ref{eq:equations_to_solve}) that $F_1=\spn(f_1)$ or $F_1=\spn(e_1)$ both of which are obviously isotropic with respect to $\beta^{k,k}_D$ and $\beta^{k-1,k-1}_C$. 

If $F_1=\spn(e_1)$, we have 
\[
F_1^{\perp_D}=\spn(e_1,\ldots,e_k,f_1,\ldots,f_{k-1}) \hspace{1.6em}\text{and}\hspace{1.6em} F_1^{\perp_C}=\spn(e_1,\ldots,e_{k-1},f_1,\ldots,f_{k-2})
\]
which immediately yields the proposed Jordan types of $z_{k,k}^{(1)}$ and $z_{k-1,k-1}^{(1)}$. Note that the vectors
\[
e_j^{(1)}:=e_{j+1} \text{ and } f_j^{(1)}:=f_j, j \in \{1,\ldots,k-1\}
\]
form a special simultaneous Jordan system for $z_{k,k}^{(1)}$ and $z_{k-1,k-1}^{(1)}$. Similarly, if $F_1=\spn(f_1)$, we have $F_1^{\perp_D}=\spn(e_1,\ldots,e_{k-1},f_1,\ldots,f_k)$ as well as $F_1^{\perp_C}=\spn(e_1,\ldots,e_{k-2},f_1,\ldots,f_{k-1})$ and the vectors 
\[
e_j^{(1)}:=f_{j+1} \text{ and } f_j^{(1)}:=e_j, j \in \{1,\ldots,k-1\} 
\]
form a special simultaneous Jordan basis for $z_{k,k}^{(1)}$ and $z_{k-1,k-1}^{(1)}$.\medskip

If $k\neq m$ and $m-k$ is even we have
\[
l_1=\spn(e),\ldots,l_{m-k}=\spn(e),l_{m-k+1}=\spn(e+(-1)^\epsilon f)
\]
where $\epsilon=0$ if the rightmost ray is dotted and $\epsilon=1$ if it is undotted. By arguing as in Lemma \ref{lem:n-1_1_case} we obtain $F_i=\spn(e_1,\ldots,e_i)$, $1\leq i \leq m-k$, and 
\[
F_{m-k+1} = \spn\left(e_1,\ldots,e_{m-k},e_{m-k+1}+(-1)^\epsilon f_1\right)
\]     
Note that $F_{m-k+1}$ is indeed isotropic with respect to both $\beta^{n-k,k}_D$ and $\beta^{n-k-1,k-1}_C$ (the vectors $e_1,\ldots,e_{m-k+1},f_1$ are isotropic and pairwise orthogonal with respect to either of the two forms).

In order to check the Jordan types first note that $F_i^{\perp_D}=\spn(e_1,\ldots,e_{n-k-i},f_1,\ldots,f_k)$ and $F_i^{\perp_C}=\spn(e_1,\ldots,e_{n-k-i-1},f_1,\ldots,f_{k-1})$ for $i\in\{1,\ldots,m-k\}$. The residue classes of the vectors $e_{i+1},\ldots,e_{n-k-i},f_1,\ldots,f_k$ clearly form a Jordan basis of $z_{n-k,k}^{(i)}$ of the correct type. Similarly, we obtain a Jordan basis of $z_{n-k-1,k-1}^{(i)}$ after deleting $e_{n-k-i}$ and $f_k$. Furthermore, $F_{m-k+1}^{\perp_D}$ is spanned by the linearly independent vectors
\begin{equation} \label{eq:special_vectors_base_induction}
\begin{array} {rl} 
& e_{m-k+1}+(-1)^\epsilon f_1,\,\, \ldots\,\, e_{m-1}+(-1)^\epsilon f_{k-1} \\
e_1,\,\, e_2,\,\, \ldots,\,\, e_{m-k} & \\
& e_{m-k+1}-(-1)^\epsilon f_1,\,\, \ldots\,\, e_{m-1}-(-1)^\epsilon f_{k-1},\,\, e_m-(-1)^\epsilon f_k. 
\end{array}
\end{equation}
Note that $z$ sends a vector to its left neighbor (the leftmost vectors in the first and third row are sent to the rightmost vector in the second row). Furthermore, $F_{m-k+1}^{\perp_C}$ is spanned by the same vectors excluding $e_m-(-1)^\epsilon f_k$ and $e_{m-1}+(-1)^\epsilon f_{k-1}$. It follows directly from (\ref{eq:special_vectors_base_induction}) that $z_{n-k,k}^{(m-k+1)}$ and $z_{n-k-1,k-1}^{(m-k+1)}$ have the correct Jordan type. It is straightforward to check that the vectors 
\[
e_j^{(m-k+1)}:=\frac{1}{\sqrt{2}}\left(e_{m-k+1+j}+(-1)^\epsilon f_{j+1}\right) \text{ and } f_j^{(m-k+1)}:=\frac{1}{\sqrt{2}}\left(e_{m-k+j}-(-1)^\epsilon f_j\right),
\]
where $j \in\{1,\ldots,k-1\}$, form a special simultaneous Jordan system.\medskip

If $m-k$ is odd we similarly get $F_i=\spn(e_1,\ldots,e_i)$, $i\in\{1,\ldots,m-k\}$, and 
\[
F_{m-k+1} = \spn\left(e_1,\ldots,e_{m-k},{\bf i}e_{m-k+1}+(-1)^\epsilon f_1\right)
\]  
which is isotropic with respect to $\beta^{n-k,k}_D$ and $\beta^{n-k-1,k-1}_C$ (again, $e_1,\ldots,e_{m-k+1},f_1$ are isotropic and pairwise orthogonal). As in the case in which $m-k$ is even one can show that the maps $z_{n-k,k}^{(i)}$ have the correct Jordan type for all $i\in\{1,\ldots,m-k+1\}$. We omit the details and claim that 
\[
e_j^{(m-k+1)}:=\frac{1}{\sqrt{2}}\left({\bf i}e_{m-k+1+j}+(-1)^\epsilon f_{j+1}\right) \text{ and } f_j^{(m-k+1)}:=\frac{1}{\sqrt{2}}\left({\bf i}e_{m-k+j}-(-1)^\epsilon f_j\right),
\]
where $j \in\{1,\ldots,k-1\}$, yields a special Jordan system.
\end{proof}


\begin{lem} \label{lem:tech_lem_2}
Let $\ba\in\mathbb B^{n-k,k}$ be a cup diagram without undotted cups and let $(F_1,\ldots,F_m)\in\phi_m^{-1}(T_\ba)$.

If $k>1$ is odd, then the vector spaces $F_{m-k+2},\ldots,F_m$ are isotropic with respect to $\beta^{n-k,k}_D$ and for all $i\in\{m-k+2,\ldots,m\}$ we have
\[
J(z_{n-k,k}^{(i)})=(k-i,k-i).
\]
Moreover, the vector spaces $F_{m-k+2},\ldots,F_{m-1}$ are isotropic with respect to $\beta^{n-k-1,k-1}_C$ and we have
\[
J(z_{n-k-1,k-1}^{(i)})=(k-i-1,k-i-1)
\]
for all $i\in\{m-k+2,\ldots,m-1\}$.

Furthermore, there exists a special simultaneous Jordan system for $z_{n-k,k}^{(i)}$ and $z_{n-k-1,k-1}^{(i)}$ for all $i\in\{m-k+3,m-k+5,\ldots,m-2\}$ (that is all right endpoints of cups except the rightmost one).
\end{lem}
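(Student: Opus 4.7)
I would prove Lemma~\ref{lem:tech_lem_2} by induction on the number of dotted cups of $\ba$ that have been processed, starting from the data established in Lemma~\ref{lem:tech_lem_1} at index $i_0=m-k+1$. Since every cup of $\ba$ is dotted and the cups are placed side by side from left to right, the inductive step will process one dotted cup at a time: if a special simultaneous Jordan system is already available at index $i$ (either $i=i_0$ or the right endpoint of the previously processed cup), I construct $F_{i+1}$, $F_{i+2}$ corresponding to the next dotted cup with endpoints $(i+1,i+2)$, verify the isotropy and Jordan-type assertions, and (provided $i+2<m$) produce a new special simultaneous Jordan system at level $i+2$.

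\textbf{Construction of $F_{i+1}$ and $F_{i+2}$.} Relation (R2) (or (R2$'$)) of Definition~\ref{defi:topological_P1_components} forces $l_{i+1}=l_{i+2}$. Since $F_i\subseteq\im(z)$ (cf.\ Remark~\ref{y_remark}), Lemma~\ref{lem:lin_alg_lem} applied with $U=F_i$ gives a unique line $L_{i+1}\subseteq z^{-1}F_i\cap F_i^\perp$ with $C(L_{i+1})=l_{i+1}$, and I set $F_{i+1}=F_i\oplus L_{i+1}$. Applying Lemma~\ref{lem:lin_alg_lem} once more with $U=F_{i+1}$ yields $L_{i+2}$ and $F_{i+2}=F_{i+1}\oplus L_{i+2}$. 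Using (SJS\ref{item:SJS2}) I can identify $z^{-1}F_i\cap F_i^\perp=\spn(e_1^{(i)},f_1^{(i)})$ and $z^{-1}F_{i+1}\cap F_{i+1}^\perp$ with an analogous two-dimensional space living inside $\spn(e_1^{(i)},f_1^{(i)},e_2^{(i)},f_2^{(i)})$, so $L_{i+1}$ and $L_{i+2}$ acquire explicit expressions: $L_{i+1}=\spn(\alpha e_1^{(i)}+\beta f_1^{(i)})$ for scalars $(\alpha,\beta)$ encoding $l_{i+1}$, and $L_{i+2}$ is determined by $l_{i+2}=l_{i+1}$ together with (SJS\ref{item:SJS3}), placing it in a canonical direction spanned by $e_2^{(i)}, f_2^{(i)}$, and possibly a correction from $e_1^{(i)}, f_1^{(i)}$.

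\textbf{Isotropy and Jordan types.} With these explicit descriptions, isotropy of $F_{i+1}$ and $F_{i+2}$ with respect to both $\beta^{n-k,k}_D$ and $\beta^{n-k-1,k-1}_C$ follows from the pairing formulas in (SJS\ref{item:SJS1}): the vectors $e_1^{(i)}, f_1^{(i)}, e_2^{(i)}, f_2^{(i)}$ pair only with their Kronecker-dual partners $f_{k-i}^{(i)}, e_{k-i}^{(i)}, f_{k-i-1}^{(i)}, e_{k-i-1}^{(i)}$, which lie well outside the subspaces defining $L_{i+1}, L_{i+2}$ whenever $k-i\geq 3$, i.e.\ whenever we have not yet reached the final cup. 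The Jordan types of $z_{n-k,k}^{(i+1)}$, $z_{n-k,k}^{(i+2)}$, $z_{n-k-1,k-1}^{(i+1)}$, $z_{n-k-1,k-1}^{(i+2)}$ are then read off by computing $F_{i+j}^{\perp_D}/F_{i+j}$ and $F_{i+j}^{\perp_C}/F_{i+j}$ directly from the Jordan system at level $i$; the removal of one chain of length two from each side produces the expected drop $(k-i,k-i)\rightsquigarrow(k-i-2,k-i-2)$, matching the target Jordan types.

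\textbf{New special simultaneous Jordan system at level $i+2$ and main obstacle.} Provided $i+2<m$, I would construct the new system by taking, as $e_1^{(i+2)}$ and $f_1^{(i+2)}$, two suitably normalized combinations of $e_2^{(i)},f_2^{(i)}$ (together with lower-order corrections in $e_1^{(i)},f_1^{(i)}$) chosen so that they are Hermitian-orthogonal to $L_{i+1}\oplus L_{i+2}$ and have unit length; for $j\geq 2$ I simply reindex $e_j^{(i+2)}:=e_{j+1}^{(i)}$, $f_j^{(i+2)}:=f_{j+1}^{(i)}$. The main difficulty is choosing the coefficients so that all three defining properties hold at once: (SJS\ref{item:SJS2}) requires orthonormality with respect to the Hermitian form; (SJS\ref{item:SJS1}) demands that the bilinear pairings recover the signed Kronecker formulas with the correct $(-1)^{j-1}$ twist after the index shift $k-i\mapsto k-(i+2)$; and (SJS\ref{item:SJS3}) requires compatibility with $C$. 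The twist in (SJS\ref{item:SJS1}) is the most delicate constraint because the odd-position shift alters signs, so the correct choice of $e_1^{(i+2)},f_1^{(i+2)}$ depends on a small case analysis according to whether the line $l_{i+1}\in\mathbb P^1$ is generic, $\spn(e)$, or $\spn(f)$. Once this is settled, the induction terminates: at the rightmost cup $(m-1,m)$ the above construction of $F_{m-1}, F_m$ still goes through (giving the claimed isotropy and Jordan types), but no further special simultaneous Jordan system is required, which is exactly why the lemma excludes $i=m$ from the list of indices where the system is built.
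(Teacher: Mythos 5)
Your overall strategy is the same as the paper's: induct over the dotted cups from left to right, use Lemma~\ref{lem:lin_alg_lem} to pin down the two new spaces attached to each cup, and carry along a special simultaneous Jordan system in the sense of Definition~\ref{defi:Jordan_system}; your construction of $F_{i+1}=F_i\oplus L_{i+1}$ and $F_{i+2}=F_{i+1}\oplus L_{i+2}$ (with $L_{i+2}=\spn(e_2^{(i)}+\mu f_2^{(i)})$, no correction term needed, by (SJS3)) is essentially the paper's. The genuine gap is in the heart of the inductive step, the production of the \emph{new} special system at level $i+2$. Your ansatz -- level-one vectors built from $e_2^{(i)},f_2^{(i)}$ plus corrections, and plain reindexing $e_j^{(i+2)}:=e_{j+1}^{(i)}$, $f_j^{(i+2)}:=f_{j+1}^{(i)}$ for $j\geq 2$ -- cannot be made to work, for no choice of the level-one vectors. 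If the chains at level $i$ have length $\ell$ (so the new ones must have length $\ell-2$), your recipe uses $e_{\ell-1}^{(i)}$ and $f_{\ell-1}^{(i)}$ as the top vectors; but by (SJS1), in the generic case $F_{i+2}\ni e_2^{(i)}+\mu f_2^{(i)}$ one has $\beta^{n-k,k}_D\bigl(f_2^{(i)},e_{\ell-1}^{(i)}\bigr)\neq 0$ and $\beta^{n-k,k}_D\bigl(f_{\ell-1}^{(i)},e_2^{(i)}\bigr)\neq 0$, so these vectors are not in $F_{i+2}^{\perp_D}$ and their residues cannot belong to a Jordan basis of $z^{(i+2)}_{n-k,k}$ (an analogous failure occurs in the degenerate cases). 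Moreover the chain condition forces $z(e_2^{(i+2)})=z(e_3^{(i)})=e_2^{(i)}$ to be the new level-one vector (or at least congruent to it), which is incompatible with Hermitian orthogonality to $L_{i+2}$ and with the $z$-invariance needed at the next cup. The correct system is structurally different: one new chain is the ``orthogonal'' combination $e_j^{(i)}-\mu f_j^{(i)}$ at the \emph{same} indices $j=1,\ldots,\ell-2$, the other is the ``parallel'' combination $e_{j+2}^{(i)}+\mu f_{j+2}^{(i)}$ shifted by \emph{two} (with the analogous unshifted/shift-by-two recipes when $F_{i+1}=F_i\oplus\spn(e_1^{(i)})$ or $F_i\oplus\spn(f_1^{(i)})$); this reflects that adding $L_{i+1}\oplus L_{i+2}$ kills the bottom two levels of the parallel chain while passing to $F_{i+2}^{\perp_D}$ truncates the orthogonal chain at the top. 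With your fixed reindexing the induction breaks down at the first step.

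A second, smaller gap concerns the final cup. Your isotropy argument is explicitly restricted to the range where the (SJS1)-dual partners of $e_1^{(i)},f_1^{(i)},e_2^{(i)},f_2^{(i)}$ lie outside levels $1,2$, and you then simply assert that at the rightmost cup ``the construction still goes through.'' But the lemma claims $\beta^{n-k,k}_D$-isotropy of $F_{m-1}$ and $F_m$, and at the last cup the chains have length $2$, so the new vectors \emph{do} pair with each other's duals; isotropy there holds only because of the alternating sign in (SJS1), e.g.\ $\beta^{n-k,k}_D\bigl(e_1+\mu f_1,\,e_2+\mu f_2\bigr)=\mu\bigl(\beta^{n-k,k}_D(f_1,e_2)+\beta^{n-k,k}_D(f_2,e_1)\bigr)=\mu(1-1)=0$. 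This cancellation is never checked in your write-up (and your numerical threshold is also off by one: levels $1,2$ pair trivially only when the chain length is at least $4$, which happens to suffice away from the last cup since the lengths are even). The paper's verification via (SJS1) applies uniformly, including the last cup, precisely because of this sign.
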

\begin{proof}
Let $(F_1,\ldots,F_m)\in\phi_m^{-1}(T_\ba)$, i.e.\ we have equalities
\[
C(F_1)=l_1,\,C(F_2\cap F_1^\perp)=l_2,\,\ldots\,,\,C(F_m\cap F_{m-1}^\perp)=l_m. 
\]
for some $(l_1,\ldots,l_m)\in T_\ba$. Let $i\in\{m-k+2,\ldots,m-1\}$ be a left endpoint of a dotted cup in $\ba$ and assume by induction that the claims of the lemma are true for $F_{m-k+2},\ldots,F_{i-1}$. The goal is to show the claim for $F_i$ and $F_{i+1}$. 

By induction (or by Lemma~\ref{lem:tech_lem_1} if the cup connecting $i$ and $i+1$ is the leftmost cup) there exists a special simultaneous Jordan system 
\begin{eqnarray}
\label{simultaneous_system}
\begin{array}{cc}
&e_1^{(i-1)},\hspace{.5em} e_2^{(i-1)}, \hspace{.5em}\ldots\hspace{.5em}, \hspace{.5em}e_{k-i}^{(i-1)}, \hspace{.5em}e_{k-i+1}^{(i-1)} \\
&f_1^{(i-1)},\hspace{.5em} f_2^{(i-1)}, \hspace{.5em}\ldots\hspace{.5em}, \hspace{.5em}f_{k-i}^{(i-1)}, \hspace{.5em}f_{k-i+1}^{(i-1)}
\end{array}
\end{eqnarray}
for $z_{n-k,k}^{(i-1)}$ and $z_{n-k-1,k-1}^{(i-1)}$. Since $e_1^{(i-1)}$ and $f_1^{(i-1)}$ form an orthonormal basis of $z^{-1}F_{i-1}\cap F_{i-1}^\perp$it follows from Lemma~\ref{lem:lin_alg_lem} that $C(e_1^{(i-1)})$ and $C(f_1^{(i-1)})$ form an orthonormal basis of $\mathbb C^2$ and we can write $l_i=\spn(C(e_1^{(i-1)})+\mu^{(i)}C(f_1^{(i-1)}))$, where $\mu^{(i)}\in\mathbb C$, or $l_i=\spn(C(f_1^{(i-1)}))$.\smallskip

{\it Check that $F_i$ and $F_{i+1}$ are isotropic:} In order to determine the vector spaces $F_i$ and $F_{i+1}$ we first assume that $l_i=\spn(C(e_1^{(i-1)})+\mu^{(i)}C(f_1^{(i-1)}))$. Consider the $z$-invariant vector space $F_{i-1}\oplus\spn(e_1^{(i-1)}+\mu^{(i)}f_1^{(i-1)})$. Note that 
\begin{align*}
C\left(\left(F_{i-1}\oplus\spn(e_1^{(i-1)}+\mu^{(i)}f_1^{(i-1)})\right)\cap F_{i-1}^\perp\right) &= C\left(\spn(e_1^{(i-1)}+\mu^{(i)}f_1^{(i-1)})\right) \\
																																																		 &= \spn\left(C(e_1^{(i-1)})+\mu^{(i)}C(f_1^{(i-1)})\right)
\end{align*}
Hence, it follows that $F_i=F_{i-1}\oplus\spn(e_1^{(i-1)}+\mu^{(i)}f_1^{(i-1)})$ because $F_i$ is the unique $z$-invariant subspace satisfying $C(F_i\cap F_{i-1}^\perp)=l_i$ (cf.\ Lemma \ref{lem:lin_alg_lem}). Next consider the vector space $F_i\oplus\spn(e_2^{(i-1)}+\mu^{(i)}f_2^{(i-1)})$. Again, this space is $z$-invariant and we have
\begin{align*}
C\left(\left(F_{i-1}\oplus\spn(e_2^{(i-1)}+\mu^{(i)}f_2^{(i-1)})\right)\cap F_{i-1}^\perp\right) &= \spn\left(C(e_2^{(i-1)})+\mu^{(i)}C(f_2^{(i-1)})\right) \\
																																																		 &\overset{(\ref{item:SJS3})}{=} \spn\left(C(z(e_2^{(i-1)}))+\mu^{(i)}C(z(f_2^{(i-1)}))\right) \\
																																																		 &= \spn\left(C(e_1^{(i-1)})+\mu^{(i)}C(f_1^{(i-1)})\right).
\end{align*}
Hence, $F_{i+1}=F_i\oplus\spn(e_2^{(i-1)}+\mu^{(i)}f_2^{(i-1)})$ because $C(F_{i+1}\cap F_i^\perp)=l_i$.

If $l_i=\spn(C(f_1^{(i-1)}))$ we argue as above and obtain $F_i=F_{i-1}\oplus\spn(f_1^{(i-1)})$ as well as $F_{i+1}=F_i\oplus\spn(f_2^{(i-1)})$.

Using formula (\ref{item:SJS1}) and the fact that the vectors in (\ref{simultaneous_system}) are perpendicular to $F_{i-1}$ it is now easy to deduce that $F_i$ and $F_{i+1}$ are isotropic with respect to $\beta^{n-k,k}_D$. Moreover, $F_i$ and $F_{i+1}$ are isotropic with respect to $\beta^{n-k-1,k-1}_C$ if $i<m-1$. If $i=m-1$ only $F_i$ is isotropic with respect to $\beta^{n-k-1,k-1}_C$.\smallskip

{\it Jordan types and special Jordan systems:} It remains to compute the Jordan types of the maps $z_{n-k,k}^{(i)}$, $z_{n-k,k}^{(i+1)}$ and construct a special Jordan system.

Assume that $F_i=F_{i-1}\oplus\spn(e_1^{(i-1)}+\mu^{(i)}f_1^{(i-1)})$ with $\mu^{(i)}\neq 0$ and consider the following linearly independent vectors:
\begin{equation} \label{eq:Jordan_basis}
	\begin{split}
&e_1^{(i-1)}-\mu^{(i)} f_1^{(i-1)}, e_2^{(i-1)}-\mu^{(i)} f_2^{(i-1)}, \ldots , e_{m-(i-1)}^{(i-1)}-\mu^{(i)} f_{m-(i-1)}^{(i-1)}\\
&e_1^{(i-1)}+\mu^{(i)} f_1^{(i-1)}, e_2^{(i-1)}+\mu^{(i)} f_2^{(i-1)}, \ldots , e_{m-(i-1)}^{(i-1)}+\mu^{(i)} f_{m-(i-1)}^{(i-1)} 
	\end{split}
\end{equation}
Again, note that $z$ maps each vector to its left neighbor (the lefmost vectors are sent to $F_{i-1}$). 

Note that $F_i^{\perp_D}$ is the direct sum of $F_{i-1}$ and the span of all the vectors in (\ref{eq:Jordan_basis}) except the rightmost one in the first row. In particular, $z^{(i)}_{n-k,k}$ has the correct Jordan type. Similarly, we see that $F_{i+1}^{\perp_D}$ is the direct sum of $F_{i-1}$ and the span of all vectors in (\ref{eq:Jordan_basis}) except the two rightmost ones in the first row. Hence, $z^{(i+1)}_{n-k,k}$ also has the correct Jordan type.

Moreover, $F_i^{\perp_C}$ is the direct sum of $F_{i-1}$ and the span of all the vectors in (\ref{eq:Jordan_basis}) except the two rightmost ones in the first row and the last one in the second row. In particular, $z^{(i)}_{n-k-1,k-1}$ has the correct Jordan type. Similarly, if $i<m-1$, we see that $F_{i+1}^{\perp_C}$ is the direct sum of $F_{i-1}$ and the span of all vectors in (\ref{eq:Jordan_basis}) except the three rightmost ones in the first and the two rightmost ones in the second row. Hence, $z^{(i+1)}_{n-k-1,k-1}$ also has the correct Jordan type.

In order to finish the induction step we define linearly independent vectors
\[
e_j^{(i+1)}:=\frac{1}{\sqrt{2\mu}}\left(e_j^{(i-1)}-\mu f_j^{(i-1)}\right) \hspace{1em} \text{and} \hspace{1em} f_j^{(i+1)}:=\frac{1}{\sqrt{2\mu}}\left(e_{j+2}^{(i-1)}+\mu f_{j+2}^{(i-1)}\right)
\]
for $j\in\{1,\ldots,m-i-1\}$. It is straightforward to check that these vectors form a special simultaneous Jordan system. 

If $F_i=F_{i-1}\oplus\spn(e_1^{(i-1)})$ or $F_i=F_{i-1}\oplus\spn(f_1^{(i-1)})$ we see that $F_i^{\perp_D}$ is the direct sum of $F_{i-1}$ and the span of all vectors in (\ref{simultaneous_system}) except $f_{m-(i-1)}^{(i-1)}$, if $F_i=F_{i-1}\oplus\spn(e_1^{(i-1)})$, resp.\ $e_{m-(i-1)}^{(i-1)}$, if $F_i=F_{i-1}\oplus\spn(f_1^{(i-1)})$. Similarly, we have that $F_{i+1}^{\perp_D}$ is the direct sum of $F_{i-1}$ and the span of all vectors in (\ref{simultaneous_system}) except $f_{m-i}^{(i-1)}$ and $f_{m-(i-1)}^{(i-1)}$, if $F_i=F_{i-1}\oplus\spn(e_1^{(i-1)})$, resp.\ $e_{m-i}^{(i-1)}$ and $e_{m-(i-1)}^{(i-1)}$, if $F_i=F_{i-1}\oplus\spn(f_1^{(i-1)})$. In both cases we obtain the correct Jordan types for $z_{n-k,k}^{(i)}$ and $z_{n-k,k}^{(i+1)}$. 

If $F_i=F_{i-1}\oplus\spn(e_1^{(i-1)})$ or $F_i=F_{i-1}\oplus\spn(f_1^{(i-1)})$ we see that $F_i^{\perp_C}$ is the direct sum of $F_{i-1}$ and the span of all vectors in (\ref{simultaneous_system}) except the rightmost one in the first (resp.\ second) row and the two rightmost ones in the second (resp.\ first) row if $F_i=F_{i-1}\oplus\spn(e_1^{(i-1)})$ (resp.\ $F_i=F_{i-1}\oplus\spn(f_1^{(i-1)})$). Similarly, we have that $F_{i+1}^{\perp_C}$ is the direct sum of $F_{i-1}$ and the span of all vectors in (\ref{simultaneous_system}) except the three rightmost vectors in the second (resp.\ first) row and the rightmost one in the first (resp.\ second) row if $F_i=F_{i-1}\oplus\spn(e_1^{(i-1)})$ (resp.\ $F_i=F_{i-1}\oplus\spn(f_1^{(i-1)})$). In both cases $z_{n-k-1,k-1}^{(i)}$ has Jordan type $(m-i-1,m-i-1)$ and $z_{n-k-1,k-1}^{(i+1)}$ has Jordan type $(m-i-2,m-i-2)$.

In order to finish we set $e_j^{(i+1)}:=e_j^{(i-1)}$ and $f_j^{(i+1)}:=f_{j+2}^{(i-1)}$ for $j\in\{1,\ldots,m-i-1\}$ in case $F_{i+1}=F_{i-1}\oplus\spn(f_1^{(i-1)},f_2^{(i-1)})$ and we set $e_j^{(i+1)}:=e_{j+2}^{(i-1)}$ and $f_j^{(i+1)}:=f_j^{(i-1)}$ for $j\in\{1,\ldots,m-i-1\}$ in case $F_{i+1}=F_{i-1}\oplus\spn(e_1^{(i-1)},e_2^{(i-1)})$.
\end{proof}

\begin{lem} \label{lem:tech_lem_3}
If $k$ is even, then the vector spaces $F_1,\ldots,F_m$ are isotropic with respect to $\beta^{k,k}_D$ and
\[
J(z_{k,k}^{(i)})=(k-i,k-i)
\]
for all $i\in\{1,\ldots,m\}$. Moreover, the vector spaces $F_1,\ldots,F_{m-1}$ are isotropic with respect to $\beta^{k-1,k-1}_C$ and we have 
\[
J(z_{k-1,k-1}^{(i)})=(k-i-1,k-i-1)
\]
for all $i\in\{1,\ldots,m-1\}$. 
\end{lem}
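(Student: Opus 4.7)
The plan is to mirror the inductive strategy of Lemma~\ref{lem:tech_lem_2}. Since $k$ is even, by the standing assumption on two-row partitions of type $D$ we must have $k=m$, so a cup diagram $\ba\in\mathbb B^{k,k}$ without undotted cups is forced to consist of exactly $k/2$ consecutive dotted cups connecting the pairs $(2j-1,2j)$ for $j=1,\ldots,k/2$, and contains no rays. In particular, $\rho(\ba)$ does not enter the picture, and the points $l_1=l_2,\,l_3=l_4,\ldots,l_{k-1}=l_k$ determined by the cups of $\ba$ are the only constraints describing $T_\ba$. The inductive variable will be the index $j\in\{1,\ldots,k/2\}$ of the cup currently being processed.

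First I set up the base case: take $F_0=\{0\}$ and the standard vectors $e_j^{(0)}:=e_j$, $f_j^{(0)}:=f_j$ for $j\in\{1,\ldots,k\}$. I claim that this is a special simultaneous Jordan system for $z_{k,k}^{(0)}=z_{k,k}$ and $z_{k-1,k-1}^{(0)}=z_{k-1,k-1}$ in the sense of Definition~\ref{defi:Jordan_system}. The Jordan structure and orthonormality are tautological; (SJS\ref{item:SJS1}) follows immediately from the definitions of $\beta^{k,k}_D$ and $\beta^{k-1,k-1}_C$ (for the even case $k=m$ given in Section~\ref{sec:embedded_Springer_fiber}); and (SJS\ref{item:SJS3}) holds because $C(e_j)=e=C(e_{j-1})=C(z(e_j))$ for $j\geq 2$, and similarly $C(f_j)=C(z(f_j))$. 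Thus we have the induction hypothesis needed to feed into the argument for the leftmost cup.

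Next comes the inductive step. For each left endpoint $i=2j-1$ of a cup, assuming a special simultaneous Jordan system for $z_{k,k}^{(i-1)}$ and $z_{k-1,k-1}^{(i-1)}$ has been constructed, the proof of Lemma~\ref{lem:tech_lem_2} applies verbatim. More precisely: writing $l_i$ in terms of the orthonormal basis $C(e_1^{(i-1)}),C(f_1^{(i-1)})$ of $\mathbb C^2$ provided by Lemma~\ref{lem:lin_alg_lem}, one pins down $F_i$ and $F_{i+1}$ explicitly; the isotropy of $F_i,F_{i+1}$ with respect to $\beta^{k,k}_D$ and (for $i<m-1$) $\beta^{k-1,k-1}_C$ follows from (SJS\ref{item:SJS1}) together with the fact that the simultaneous Jordan system lies in $F_{i-1}^\perp$; the Jordan types of the induced operators $z_{k,k}^{(i)}$, $z_{k,k}^{(i+1)}$, $z_{k-1,k-1}^{(i)}$, $z_{k-1,k-1}^{(i+1)}$ are read off from the explicit bases $(\ref{eq:Jordan_basis})$; and the same formulae as in Lemma~\ref{lem:tech_lem_2} produce the next special simultaneous Jordan system for $z_{k,k}^{(i+1)}$ and $z_{k-1,k-1}^{(i+1)}$ (provided $i+1<m$, i.e.\ we are not yet at the rightmost cup).

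I expect essentially no new obstacle: the main case analysis in Lemmas~\ref{lem:tech_lem_1} and~\ref{lem:tech_lem_2} was driven by the presence and type of the rightmost ray, and here there is no ray to worry about, so the even-$k$ case is cleaner than the odd-$k$ case. The only point that requires a moment of care is the very last cup, where the isotropy of $F_m$ with respect to $\beta^{k,k}_D$ must still be checked but the analogous statement for $\beta^{k-1,k-1}_C$ need not hold, exactly as in the final step of Lemma~\ref{lem:tech_lem_2}; this is already handled by the case distinction $i<m-1$ versus $i=m-1$ appearing in that proof.
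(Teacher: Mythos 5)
Your proposal is correct and follows exactly the route the paper takes: its proof of this lemma is just the remark that one runs the inductive construction of Lemma~\ref{lem:tech_lem_2} starting from the standard Jordan basis $e_1,\ldots,e_k,f_1,\ldots,f_k$ of $z_{k,k}$ as the special simultaneous Jordan system. Your write-up merely makes explicit the verification of (SJS\ref{item:SJS1})--(SJS\ref{item:SJS3}) for this base case and the observation that the absence of rays (since $k=m$ forces the unique diagram of $k/2$ successive dotted cups) lets the cup-by-cup step of Lemma~\ref{lem:tech_lem_2}, including the $i=m-1$ caveat for $\beta^{k-1,k-1}_C$, go through unchanged.
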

\begin{proof}
The lemma can be proven by an inductive construction similar to the one in the proof of Lemma \ref{lem:tech_lem_2}. Note that this inductive construction starts with the Jordan basis $e_1,\ldots,e_k,f_1,\ldots,f_k$ of the restriction of $z$ to $E_{k,k}$ as special Jordan system.
\end{proof}

\begin{proof}[Proof (Proposition \ref{prop:preimage_no_undotted_cups}).]
By Lemma \ref{lem:tech_lem_1}, Lemma \ref{lem:tech_lem_2} and Lemma \ref{lem:tech_lem_3} we see that the vector spaces of an arbitrary flag $(F_1,\ldots,F_m)\in\phi_m^{-1}(T_\ba)$ are isotropic with respect to $\beta^{n-k,k}_D$ and that the Jordan types of the sequence of endomorphisms $z_{n-k,k}^{(m-1)},z_{n-k,k}^{(m-2)},\ldots,z_{n-k,k}^{(1)}$ are the following 
\[
(1,1),(2,2),\ldots,(k-1,k-1),(k,k),(k+2,k),\ldots,(n-k,k)
\]
if $k<m$, and 
\[
(1,1),(2,2),\ldots,(k-1,k-1),(k,k)
\]
if $m=k$. Moreover, the vector spaces of the flag $(F_1,\ldots,F_{m-1})=\pi_m(F_1,\ldots,F_m)\in\pi_m\left(\phi_m^{-1}(T_\ba)\right)$ are isotropic with respect to $\beta^{n-k-1,k-1}_C$ and the maps $z_{n-k-1,k-1}^{(m-2)},z_{n-k-1,k-1}^{(m-2)},\ldots,z_{n-k-1,k-1}^{(1)}$ have Jordan types
\[
(1,1),(2,2),\ldots,(k-2,k-2),(k-1,k-1),(k+1,k-1),\ldots,(n-k-1,k-1)
\]
if $k<m$, and 
\[
(1,1),(2,2),\ldots,(k-2,k-2),(k-1,k-1)
\]
if $m=k$. Thus, it follows that $S^{z_{n-k,k}}_D(F_1,\ldots,F_m)=T$ as well as $S^{z_{n-k-1,k-1}}_C(F_1,\ldots,F_{m-1})=T'$ and $(F_1,\ldots,F_m)$ is contained in the closure of $\left(S^{z_{n-k,k}}_D\right)^{-1}(T)$ and $(F_1,\ldots,F_{m-1})$ is contained in the closure of $\left(S^{z_{n-k-1,k-1}}_C\right)^{-1}(T')$. In particular, since $(F_1,\ldots,F_m)$ is an arbitrarily chosen flag in $\phi_m^{-1}(T_\ba)$, we have proven the inclusion of $\phi_m^{-1}(T_\ba)$ in the closure of $\left(S^{z_{n-k,k}}_D\right)^{-1}(T)$ and $\pi_m\left(\phi_m^{-1}(T_\ba)\right)$ in the closure of $\left(S^{z_{n-k-1,k-1}}_C\right)^{-1}(T')$.\medskip

Finally, note that $\phi_m^{-1}(T_\ba)$ is a closed subvariety of $\mathcal Fl^{n-k,k}_D$ which is connected because it is the preimage of $T_\ba$ (which is obviously connected) under a diffeomorphism. Thus, $\phi_m^{-1}(T_\ba)$ must be contained in precisely one of the irreducible components whose disjoint union is the closure of $(S^{z_{n-k,k}}_D)^{-1}(T)$. Since the dimension of $\phi_m^{-1}(T_\ba)$ equals the dimension of $\mathcal Fl^{n-k,k}$ it follows that $\phi_m^{-1}(T_\ba)$ equals the irreducible component of $\mathcal Fl^{n-k,k}_D$ in which it is contained.

It follows that $\pi_m\left(\phi_m^{-1}(T_\ba)\right)$ is also an irreducible closed subvariety of $\mathcal Fl^{n-k-1,k-1}_C$ of the correct dimension.    
\end{proof}

\subsubsection{Proof of Proposition~\ref{prop:preimage_contained}: Inductive step}

In \cite[Section 2]{CK08} the authors introduce a smooth subvariety $X^i_m \subseteq Y_m$, $i \in \{1,\ldots,m-1\}$, defined by 
\[
X^i_m:=\{(F_1,\ldots,F_m) \in Y_m \mid F_{i+1}=z^{-1}F_{i-1}\},
\]
and a surjective morphism of varieties $q_m^i \colon X^i_m \twoheadrightarrow Y_{m-2}$ given by
\[
(F_1,\ldots,F_m) \mapsto \left(F_1,\ldots,F_{i-1},zF_{i+2},\ldots,zF_m\right).
\]
We want to make use of the following lemma (cf.\ \cite[Theorem 2.1]{CK08} and \cite[Lemma 2.4]{Weh09}):

\begin{lem}
The diffeomorphism $\phi_m$ maps $X^i_m$ bijectively to the set
\begin{equation} \label{eq:defi_A}
A_m^i:=\{(l_1,\ldots,l_m)\in(\mathbb P^1)^m\mid l_{i+1}=l_i^\perp\}
\end{equation}
and we have a commutative diagram
\begin{equation} \label{eq:comm_diag}
\begin{tikzcd} 
X^i_m \arrow[two heads]{rr}{q_m^i} \arrow{d}{\cong}[left]{\phi_m\vert_{X_m^i}} &&Y_{m-2} \arrow{d}{\phi_{m-2}}[left]{\cong}\\
A_m^i \arrow[two heads]{rr}{f^i_m\vert_{A_m^i}} &&\left(\mathbb P^1\right)^{m-2} 
\end{tikzcd}
\end{equation}
where $f^i_m\colon\left(\mathbb P^1\right)^m \twoheadrightarrow\left(\mathbb P^1\right)^{m-2}$ is the map which forgets the coordinates $i$ and $i+1$. The orthogonal complement in (\ref{eq:defi_A}) is taken with respect to the hermitian structure of $\mathbb C^2$.
\end{lem}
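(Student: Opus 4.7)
The plan is to split the statement into two parts: the set-theoretic bijection $\phi_m(X_m^i) = A_m^i$, and the commutativity of the diagram. The key tool in both cases is Lemma \ref{lem:lin_alg_lem}, together with the identity $z^{\ast} z = P_{(\ker z)^\perp}$ (where $P$ denotes orthogonal projection), which is a direct computation in the orthonormal basis $e_1, \ldots, e_N, f_1, \ldots, f_N$.

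For the bijection, I would first invoke Remark \ref{y_remark} to assume $F_{i-1} \subseteq \im(z)$; then Lemma \ref{lem:lin_alg_lem} applied with $U = F_{i-1}$ gives a unitary isomorphism $C \colon z^{-1}F_{i-1} \cap F_{i-1}^\perp \xrightarrow{\cong} \mathbb C^2$, so in particular $\dim z^{-1}F_{i-1} = i+1$ and $\ker z \subseteq z^{-1}F_{i-1}$. If $(F_\bullet) \in X_m^i$, then $F_{i+1} = z^{-1}F_{i-1}$ and the spaces $F_i \cap F_{i-1}^\perp$ and $F_{i+1} \cap F_i^\perp$ are two orthogonal one-dimensional subspaces of this 2-plane (the latter is orthogonal to the former because $F_i^\perp \subseteq F_{i-1}^\perp$), so unitarity of $C$ forces $l_{i+1} = l_i^\perp$. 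Conversely, given $(l_\bullet) \in A_m^i$ with preimage $(F_\bullet) = \phi_m^{-1}(l_\bullet)$, the candidate subspace $z^{-1}F_{i-1}$ is $(i+1)$-dimensional, contains $F_i$, and the same unitary isomorphism forces $C(z^{-1}F_{i-1} \cap F_i^\perp) = l_i^\perp = l_{i+1}$; uniqueness of the preimage under the bijection $\phi_m$ then yields $F_{i+1} = z^{-1}F_{i-1}$, so $(F_\bullet) \in X_m^i$.

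For the commutativity, the first $i-1$ coordinates match trivially, so what remains is to verify the identities
\[
C\bigl(F_k \cap F_{k-1}^\perp\bigr) = C\bigl(zF_k \cap (zF_{k-1})^\perp\bigr) \quad \text{for } k \geq i+3,
\]
together with the boundary identity $C(F_{i+2} \cap F_{i+1}^\perp) = C(zF_{i+2} \cap F_{i-1}^\perp)$. The key observation is that $\ker z \subseteq F_{i+1}$ (forced by $F_{i+1} = z^{-1}F_{i-1}$ together with $F_{i-1} \subseteq \im(z)$), so $\ker z \subseteq F_{k-1}$ whenever $k \geq i+2$. Hence for any $v \in F_k \cap F_{k-1}^\perp$ one has $v \perp \ker z$, giving $z^{\ast}zv = v$, $zv \neq 0$, and $C(zv) = C(v)$. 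For $k \geq i+3$ one computes, for arbitrary $w' \in F_{k-1}$, that $\langle zv, zw' \rangle = \langle z^{\ast}zv, w' \rangle = \langle v, w' \rangle = 0$, so $zv \in (zF_{k-1})^\perp$. For $k = i+2$ the same idea works once a generic $w \in F_{i-1}$ is written as $w = zw'$ with $w' \in z^{-1}F_{i-1} = F_{i+1}$ (using $F_{i-1} \subseteq \im(z)$), giving $\langle zv, w \rangle = \langle v, w' \rangle = 0$. A dimension count using $\dim zF_j = j-2$ for $j \geq i+1$ (which follows from $\ker z \subseteq F_j$) together with $zF_{i+1} = F_{i-1}$ then shows that both sides of each identity are $1$-dimensional and spanned by $v$ respectively $zv$, hence equal under $C$.

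The hard part will be keeping the coordinate bookkeeping straight and recognising that the boundary case $k = i+2$ runs in parallel to the interior cases once $F_{i-1}$ is identified with $zF_{i+1}$ via the defining relation of $X_m^i$. Apart from that, everything reduces to repeated application of Lemma \ref{lem:lin_alg_lem} and standard manipulations with the hermitian adjoint.
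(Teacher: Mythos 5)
The paper itself supplies no proof of this lemma; it is stated with references to \cite[Theorem 2.1]{CK08} and \cite[Lemma 2.4]{Weh09} and used as a black box, so there is no in-paper argument to compare against. Your reconstruction is correct and is essentially the argument one finds by unwinding those references: both directions of the set-theoretic bijection reduce to the unitarity of $C$ on the $2$-plane $z^{-1}F_{i-1}\cap F_{i-1}^{\perp}$ furnished by Lemma~\ref{lem:lin_alg_lem}, and the commutativity of the square hinges on $z^{\ast}z=P_{(\ker z)^{\perp}}$, on the identity $C(zv)=C(v)$ for $v\perp\ker z$ (immediate from the definition of $C$ on the Jordan basis), and on the dimension count identifying $zF_{k}\cap(zF_{k-1})^{\perp}$ as the line $\spn(zv)$. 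The treatment of the boundary case $k=i+2$ via $zF_{i+1}=F_{i-1}$ is also exactly right.

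Two very small presentational slips, neither of which affects correctness. First, in the forward direction you cite the inclusion $F_i^{\perp}\subseteq F_{i-1}^{\perp}$ as the reason the two lines are orthogonal; that inclusion is what places $F_{i+1}\cap F_i^{\perp}$ inside the $2$-plane $z^{-1}F_{i-1}\cap F_{i-1}^{\perp}$, while the orthogonality itself follows from the first line sitting in $F_i$ and the second in $F_i^{\perp}$. Second, the phrase ``uniqueness of the preimage under the bijection $\phi_m$'' is better made precise as the uniqueness, for fixed $F_i\subseteq\im(z)$, of the $(i+1)$-dimensional $z$-stable extension of $F_i$ with a prescribed image under $C(\,\cdot\,\cap F_i^{\perp})$; this is exactly what Lemma~\ref{lem:lin_alg_lem} with $U=F_i$ gives, and it lets you conclude $F_{i+1}=z^{-1}F_{i-1}$ without having to argue about the whole flag.
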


We prove the proposition by induction on the number of undotted cups in $\ba$. If there is no undotted cup in $\ba$ the claim follows from Lemma~\ref{lem:n-1_1_case} and Proposition~\ref{prop:preimage_no_undotted_cups}. Hence, we may assume that there exists an undotted cup in $\ba$. Then there exists a cup connecting neighboring vertices $i$ and $i+1$. Let $\ba'\in\mathbb B^{n-k-2,k-2}$ be the cup diagram obtained by removing this cup. 

We have
\begin{equation} \label{eq:preimage_under_q}
\left(q_m^i\right)^{-1}\left(\phi_{m-2}^{-1}(T_{\ba'})\right)=\phi_m^{-1}\left(\left(f_m^i\right)^{-1}\left(T_{\ba'}\right)\right)=\phi_m^{-1}(T_\ba),
\end{equation} 
where the first equality followes directly from the commutativity of the diagram (\ref{eq:comm_diag}) and the second one is the obvious fact that $\left(f_m^i\right)^{-1}\left(T_{\ba'}\right)=T_\ba$. Thus, $\phi_m^{-1}(T_\ba)\subseteq Y_m$ is a closed subvariety because it is the preimage of the closed subvariety $\phi_{m-2}^{-1}(T_{\ba'})$ (which is even an irreducible component of $\mathcal Fl^{n-k-2,k-2}_D$ by induction) under the morphism $q_m^i$.

Thus, by (\ref{eq:preimage_under_q}), for any given flag $(F_1,\ldots,F_m)\in\phi_m^{-1}(T_\ba)$, we have  
\[
q_{m,i}(F_1,\ldots,F_m)=\left(F_1,\ldots,F_{i-1},zF_{i+2},\ldots,zF_m\right) \in \phi_{m-2}^{-1}(T_{\ba'})
\]
which shows that $zF_m$ is containd in $E_{n-k-2,k-2}$ and isotropic with respect to $\beta^{n-k-2,k-2}_D$ because $\phi_{m-2}(T_{\ba'}) \subset \mathcal Fl^{n-k-2,k-2}_D$ by induction. Hence, by Lemma \ref{z_respects_isotropy_lem}, $F_m$ is contained in $E_{n-k,k}$ and isotropic with respect to $\beta^{n-k,k}_D$. Thus, we have proven that $\phi_m^{-1}(T_\ba)\subseteq Y_m$ is an algebraic subvariety contained in $\mathcal Fl^{n-k,k}_D$. 

Note that $\phi_m^{-1}(T_\ba)$ is smooth and connected because it is the preimage of $T_\ba$ (which is obviously smooth and connected) under a diffeomorphism. This shows that $\phi_m^{-1}(T_\ba)$ is irreducible. Finally, the dimension of the variety $\phi_m^{-1}(T_\ba)$ obviously equals the dimension of $\mathcal Fl^{n-k,k}_D$ (because the manifold $T_\ba$ has the correct dimension). To sum up, $\phi_m^{-1}(T_\ba)$ must therefore be an irreducible component of the (embedded) Springer variety $\mathcal Fl^{n-k,k}_D$.

Let $\ba\in\mathbb B^{n-k,k}_\mathrm{odd}$. If $\ba$ only has one undotted cup which connects the vertices $m-1$ and $m$ it is easy to see that $\pi_m\left(\phi_m^{-1}(T_\ba)\right)$ equals $\pi_m\left(\phi_m^{-1}(T_\bb)\right)$, where $\bb$ denotes the cup diagram obtained by exchanging the undotted cup connecting $m-1$ and $m$ with a dotted cup. Then the claim follows from Proposition~\ref{prop:preimage_no_undotted_cups}. Analogously, if $i\neq m-1$, by applying $\pi_m$ to (\ref{eq:preimage_under_q}), it is easy to see that we also have an equality of sets 
\[
\pi_m\left(\phi_m^{-1}(T_\ba)\right)=\pi_m\left((q_m^i)^{-1}\left(\phi_{m-2}^{-1}(T_{\ba'})\right)\right)=(q_{m-1}^i)^{-1}\left(\pi_{m-2}\left(\phi_{m-2}^{-1}(T_{\ba'})\right)\right).
\]
Now the claim follows as for the type $D$ Springer fiber. This finishes the proof of Proposition~\ref{prop:preimage_contained}.\hfill $\square$

\subsection{Gluing the irreducible components} \label{sec:proof_main_thms}

Now we state and prove our main results. 

\begin{thm} \label{thm:main_result_1}
The diffeomorphism $\left(\mathbb S^2\right)^m\xrightarrow{\gamma_{n-k,k}}\left(\mathbb P^1\right)^m\xrightarrow{\phi_m^{-1}} Y_m$ restricts to a homeomorphism
\[
\mathcal S^{n-k,k}_D\xrightarrow\cong\mathcal Fl^{n-k,k}_D
\]
such that the images of the $S_\ba$ under this homeomorphism are precisely the irreducible components of $\mathcal Fl^{n-k,k}_D$ for all $\ba\in\mathbb B^{n-k,k}$.
\end{thm}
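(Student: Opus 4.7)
The plan is to combine Lemma \ref{lem:sphere_vs_projective_space} and Proposition \ref{prop:preimage_contained} with a cardinality argument, and then invoke a compactness/Hausdorff criterion. First I would use Lemma \ref{lem:sphere_vs_projective_space} to rewrite the image of $\mathcal S^{n-k,k}_D$ under the ambient diffeomorphism as
\[
(\phi_m^{-1}\circ\gamma_{n-k,k})\bigl(\mathcal S^{n-k,k}_D\bigr) \;=\; \bigcup_{\ba\in\mathbb B^{n-k,k}}\phi_m^{-1}(T_\ba) \;\subseteq\; Y_m.
\]
By Proposition \ref{prop:preimage_contained} each $\phi_m^{-1}(T_\ba)$ is an irreducible component of the embedded Springer fiber $\mathcal Fl^{n-k,k}_D$, so this union is automatically contained in $\mathcal Fl^{n-k,k}_D$, and it already assigns to each $S_\ba$ an irreducible component. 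This takes care of the ``component'' part of the statement as soon as the image equals the full Springer fiber.

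The only non-formal step remaining is promoting this containment to an equality, i.e.\ showing that every irreducible component of $\mathcal Fl^{n-k,k}_D$ occurs as some $\phi_m^{-1}(T_\ba)$. For this I would argue by counting. The assignment $\ba\mapsto\phi_m^{-1}(T_\ba)$ is injective: distinct cup diagrams impose distinct defining relations on the coordinates of $(\mathbb S^2)^m$, hence produce distinct submanifolds $S_\ba$ (the cup-diagram data can be read off from the relations satisfied on $S_\ba$), and the diffeomorphism $\phi_m^{-1}\circ\gamma_{n-k,k}$ preserves distinctness. On the other hand, Proposition \ref{prop:vL_parametrization_of_components} gives $\#\{\text{irreducible components of }\mathcal Fl^{n-k,k}_D\} = \#ADT_D^{\mathrm{sgn}}(n-k,k)$, and the bijection $\Psi$ of Lemma \ref{lem:bijection_tableaux_cups} identifies this with $\#\mathbb B^{n-k,k}$. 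Thus the injective family $\{\phi_m^{-1}(T_\ba)\}_{\ba\in\mathbb B^{n-k,k}}$ must exhaust every irreducible component, so
\[
(\phi_m^{-1}\circ\gamma_{n-k,k})\bigl(\mathcal S^{n-k,k}_D\bigr) \;=\; \mathcal Fl^{n-k,k}_D.
\]

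To finish I would note that the restriction of $\phi_m^{-1}\circ\gamma_{n-k,k}$ to $\mathcal S^{n-k,k}_D$ is a continuous bijection onto $\mathcal Fl^{n-k,k}_D$; since $\mathcal S^{n-k,k}_D$ is compact (closed subset of $(\mathbb S^2)^m$) and $\mathcal Fl^{n-k,k}_D$ is Hausdorff, the restriction is automatically a homeomorphism, and by construction it sends each $S_\ba$ to the irreducible component $\phi_m^{-1}(T_\ba)$. Essentially all of the genuine difficulty is already absorbed into Proposition \ref{prop:preimage_contained} (and in particular Proposition \ref{prop:preimage_no_undotted_cups}, the technical heart via the special simultaneous Jordan systems); the only remaining conceptual point here is the combinatorial counting that matches $|\mathbb B^{n-k,k}|$ with the number of irreducible components and thereby forces surjectivity, which is why I would expect no further obstacles at this stage.
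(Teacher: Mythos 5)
Your proposal is correct and follows essentially the same route as the paper: rewrite the image via Lemma \ref{lem:sphere_vs_projective_space}, invoke Proposition \ref{prop:preimage_contained}, note that distinct cup diagrams give distinct components, and use the bijection between $\mathbb B^{n-k,k}$ and the irreducible components (Proposition \ref{prop:vL_parametrization_of_components} together with Lemma \ref{lem:bijection_tableaux_cups}) to force the union to be all of $\mathcal Fl^{n-k,k}_D$. The final compactness/Hausdorff step is harmless but unnecessary, since restricting the ambient diffeomorphism to $\mathcal S^{n-k,k}_D$ is automatically a homeomorphism onto its image.
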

\begin{proof}
We know that the image of $\mathcal S^{n-k,k}_D\subset\left(\mathbb S^2\right)^m$ under the diffeomorphism $\phi_m^{-1}\circ\gamma_{n-k,k}$ is given by 
\[
\phi_m^{-1}\left(\gamma_{n-k,k}\left(\mathcal S^{n-k,k}_D\right)\right)=\bigcup_{\ba \in\mathbb B^{n-k,k}}\phi_m^{-1}\left(\gamma_{n-k,k}\left(S_\ba\right)\right)\overset{\text{Lemma}}{\underset{\ref{lem:sphere_vs_projective_space}}{=}}\bigcup_{\ba \in\mathbb B^{n-k,k}}\phi_m^{-1}\left(T_\ba\right).
\]
If $\ba\neq\bb$ we obviously have $T_\ba\neq T_\bb$ and thus also $\phi_m^{-1}(T_\ba)\neq \phi_m^{-1}(T_\bb)$ because $\phi_m$ is bijective. In combination with Proposition~\ref{prop:preimage_contained} this implies that $\bigcup_{\ba \in\mathbb B^{n-k,k}} \phi_m^{-1}(T_\ba)$ is a union of irreducible components of $\mathcal Fl^{n-k,k}_D$ which are pairwise different. Since the cup diagrams in $\mathbb B^{n-k,k}$ are in bijective correspondence with the irreducible components of the Springer fiber $\mathcal Fl^{n-k,k}_D$ (cf.\ Proposition~\ref{prop:vL_parametrization_of_components} and Lemma~\ref{lem:bijection_tableaux_cups}) we deduce that $\bigcup_{\ba \in\mathbb B^{n-k,k}} \phi_m^{-1}(T_\ba)$ is the entire (embedded) Springer fiber. In particular, the restriction of the diffeomorphism $\phi_m^{-1}\circ\gamma_{n-k,k}$ to $\mathcal S^{n-k,k}_D$ yields the desired homeomorphism as claimed in the theorem.
\end{proof}


We define $\mathcal Fl^{n-k,k}_{D,\mathrm{odd}}$ as the image of $\mathcal S^{n-k,k}_{D,\mathrm{odd}}$ and $\mathcal Fl^{n-k,k}_{D,\mathrm{odd}}$ as the image of $\mathcal S^{n-k,k}_{D,\mathrm{even}}$ under the homeomorphism of Theorem~\ref{thm:main_result_1}. It follows from Remark~\ref{rem:connected_components} that these are precisely the two connected components of $\mathcal Fl^{n-k,k}_D$. Since they are isomorphic we restrict ourselves to $\mathcal Fl^{n-k,k}_{D,\mathrm{odd}}$ (the results are also true for $\mathcal Fl^{n-k,k}_{D,\mathrm{even}}$). 

\begin{thm} \label{thm:main_result_2}
The morphism of algebraic varieties $Y_m\xrightarrow{\pi_m}Y_{m-1}$ restricts to a homeomorphism (even an isomorphism of algebraic varieties)
\[
\mathcal Fl^{n-k,k}_{D,\mathrm{odd}}\xrightarrow\cong\mathcal Fl^{n-k-1,k-1}_C,
\]
i.e.\ $\mathcal Fl^{n-k-1,k-1}_C$ is isomorphic to one of the two (isomorphic) connected components of $\mathcal Fl^{n-k,k}_D$. In particular, in combination with Theorem~\ref{thm:main_result_1}, we obtain a homeomorphism
\[
\mathcal S^{n-k,k}_{D,\mathrm{odd}}\cong\mathcal Fl^{n-k-1,k-1}_C
\]
and thus an explicit topological model for $\mathcal Fl^{n-k-1,k-1}_C$.
\end{thm}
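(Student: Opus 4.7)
The plan is to imitate the proof of Theorem \ref{thm:main_result_1}, but with $\pi_m$ playing the role of $\phi_m^{-1}\circ\gamma_{n-k,k}$, and to exploit the combinatorial bijection of Lemma \ref{lem:bijection_tableaux_C_D} in place of Lemma \ref{lem:bijection_tableaux_cups}. Invoking Theorem \ref{thm:main_result_1} I would first rewrite
\[
\mathcal Fl^{n-k,k}_{D,\mathrm{odd}}=\bigcup_{\ba\in\mathbb B^{n-k,k}_{\mathrm{odd}}}\phi_m^{-1}(T_\ba),
\]
and then observe that Proposition \ref{prop:preimage_contained} already delivers the two key ingredients: for every $\ba\in\mathbb B^{n-k,k}_{\mathrm{odd}}$ the subset $\pi_m(\phi_m^{-1}(T_\ba))$ is an irreducible component of the embedded variety $\mathcal Fl^{n-k-1,k-1}_C\subseteq Y_{m-1}$, so in particular $\pi_m$ sends $\mathcal Fl^{n-k,k}_{D,\mathrm{odd}}$ into $\mathcal Fl^{n-k-1,k-1}_C$.

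Next I would establish injectivity of $\pi_m$ restricted to the odd component. If $(F_\bullet),(F_\bullet')\in\mathcal Fl^{n-k,k}_{D,\mathrm{odd}}$ satisfy $F_i=F_i'$ for all $i<m$, then Remark \ref{rem:companion_flag} forces either $F_m=F_m'$ or $F_\bullet$ and $F_\bullet'$ to lie in different connected components of $\mathcal Fl_D$; the latter is impossible since $\mathcal Fl^{n-k,k}_{D,\mathrm{odd}}$ is (by definition) a single connected component of $\mathcal Fl^{n-k,k}_D$. This injectivity forces the images $\pi_m(\phi_m^{-1}(T_\ba))$ for distinct $\ba$ to be pairwise distinct irreducible components of $\mathcal Fl^{n-k-1,k-1}_C$. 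Combining the combinatorial bijections of Lemmas \ref{lem:bijection_tableaux_cups} and \ref{lem:bijection_tableaux_C_D} with Proposition \ref{prop:vL_parametrization_of_components} shows that $|\mathbb B^{n-k,k}_{\mathrm{odd}}|=|ADT_C^{\mathrm{sgn}}(n-k-1,k-1)|$ equals the total number of irreducible components of $\mathcal Fl^{n-k-1,k-1}_C$, so every component is hit and $\pi_m$ is surjective onto $\mathcal Fl^{n-k-1,k-1}_C$. Since $\mathcal Fl^{n-k,k}_{D,\mathrm{odd}}$ is compact and $\mathcal Fl^{n-k-1,k-1}_C$ is Hausdorff, a continuous bijection is automatically a homeomorphism; composing with the homeomorphism of Theorem \ref{thm:main_result_1} yields the final assertion of the theorem.

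The hard part is the upgrade from homeomorphism to isomorphism of algebraic varieties. The restriction of $\pi_m$ is a bijective morphism of projective varieties of the same (equi-)dimension, so one can work component by component: on each irreducible component — which in the two-row setting is smooth, being an iterated $\mathbb P^1$-tower analogous to the type $A$ situation treated in \cite{Fun03} — a bijective morphism of smooth projective complex varieties is an isomorphism by a standard application of Zariski's main theorem. The resulting componentwise isomorphisms then glue along their intersections (which are themselves mapped homeomorphically by the argument above) to produce the claimed isomorphism of varieties. The main obstacle I anticipate is precisely this last upgrade: the combinatorial counting and set-theoretic bijectivity are essentially forced by the machinery already in place, whereas controlling the algebraic structure of the irreducible components and their intersections sufficiently to promote a bijective morphism to a scheme isomorphism is the delicate point.
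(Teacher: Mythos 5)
Your argument for the homeomorphism is essentially the paper's own proof: injectivity of $\pi_m$ on $\mathcal Fl^{n-k,k}_{D,\mathrm{odd}}$ via Remark~\ref{rem:companion_flag} together with connectedness, identification of the image as a union of pairwise distinct irreducible components of $\mathcal Fl^{n-k-1,k-1}_C$ via Proposition~\ref{prop:preimage_contained}, and surjectivity by the count obtained from Proposition~\ref{prop:vL_parametrization_of_components} combined with Lemmas~\ref{lem:bijection_tableaux_cups} and \ref{lem:bijection_tableaux_C_D}; the compact-to-Hausdorff remark you add is implicit in the paper. Up to the word ``homeomorphism'' the proposal is correct and follows the same route.

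The genuine gap is in your upgrade to an isomorphism of algebraic varieties (a step the paper itself only asserts, without argument). Two points are unjustified as written. First, the smoothness (or at least normality, which is what your appeal to Zariski's main theorem actually needs for the target) of the irreducible components of the two-row type $C$ Springer fiber is not available from this paper: Theorem~\ref{thm:main_result_1} identifies each component with $(\mathbb S^2)^{\#\mathrm{cups}}$ only through the non-algebraic diffeomorphism $\phi_m^{-1}\circ\gamma_{n-k,k}$, and being a topological manifold does not imply algebraic smoothness; the reference \cite{Fun03} treats type $A$ only, so this input must either be quoted from the type $C/D$ literature or proved. Second, and more seriously, componentwise isomorphisms over a finite closed cover do not automatically glue: a map whose restriction to each irreducible component of a reducible variety is a morphism, and whose restrictions agree on the set-theoretic intersections, need not be a morphism on the union (already a function that is regular on each of two plane curves meeting tangentially and vanishing at the intersection point can fail to be regular on their union), so one must control the scheme-theoretic intersections of the components on both sides, which your proposal does not address. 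A cleaner way to get the algebraic statement is to construct the inverse directly: for $(F_1,\ldots,F_{m-1})\in\mathcal Fl^{n-k-1,k-1}_C$ the form induced by $\beta^{n-k,k}_D$ on the two-dimensional space $F_{m-1}^{\perp_D}/F_{m-1}$ has exactly two isotropic lines, the corresponding two choices of $F_m$ form a finite degree-two cover of $\mathcal Fl^{n-k-1,k-1}_C$ inside $\mathcal Fl^{n-k,k}_D$, and the connected component $\mathcal Fl^{n-k,k}_{D,\mathrm{odd}}$ selects a section of this cover; showing that this section is a morphism is the honest content of the ``even an isomorphism'' claim, and your route as written does not reach it.
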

\begin{proof}
Since $\mathcal Fl^{n-k,k}_{D,\mathrm{odd}}$ is a connected components of $\mathcal Fl^{n-k,k}_D$ it follows directly from Remark~\ref{rem:companion_flag} that the restriction of $\pi_m$ to $\mathcal Fl^{n-k,k}_{D,\mathrm{odd}}\subset Y_m$ defines a continuous injection with image
\[
\pi_m\left(\mathcal Fl^{n-k,k}_{D,\mathrm{odd}}\right)=\bigcup_{\ba \in\mathbb B^{n-k,k}_\mathrm{odd}}\pi_m\left(\phi_m^{-1}\left(T_\ba\right)\right).
\]
By Proposition~\ref{prop:preimage_contained} this is a union of irreducible components of $\mathcal Fl^{n-k-1,k-1}_C$ which are pairwise different (because the restriction of $\pi_m$ to $\mathcal Fl^{n-k,k,}_{D,\mathrm{odd}}$ is bijective and the $\phi_m^{-1}\left(T_\ba\right)$ are pairwise different irreducible components of $\mathcal Fl^{n-k,k}_D$ by Theorem~\ref{thm:main_result_1}). Recall that the irreducible components of $\mathcal Fl^{n-k-1,k-1}_C$ are in bijective correspondence with the cup diagrams in $\mathbb B^{n-k,k}_\mathrm{odd}$ (combine Proposition~\ref{prop:vL_parametrization_of_components} with the bijections Lemma~\ref{lem:bijection_tableaux_cups} and Lemma~\ref{lem:bijection_tableaux_C_D}).   
Hence, the image of $\mathcal Fl^{n-k,k}_{D,\mathrm{odd}}$ under $\pi_m$ equals $\mathcal Fl^{n-k-1,k-1}_C\subset Y_{m-1}$. In particular, $\pi_m$ restricts to the desired homeomorphism.

Note that the homeomorphism $\pi_m\colon\mathcal Fl^{n-k,k}_{D,\mathrm{odd}}\xrightarrow\cong\mathcal Fl^{n-k-1,k-1}_C$, which is even an isomorphism of algebraic varieties.
\end{proof}

\bibliography{litlist}
\bibliographystyle{amsalpha}

\end{document}